\newtheorem{thm}{Theorem}[section]
\newtheorem{remark}[thm]{Remark}
\newtheorem{assumption}[thm]{Assumption}
\newcommand{\be}{\bm e}
\newcommand{\bb}{\bm b}
\newcommand{\bN}{\bm N}
\newcommand{\bff}{\bm f}
\newcommand{\bn}{\bm n}
\newcommand{\bW}{\bm W}
\newcommand{\bX}{\bm X}
\newcommand{\bx}{\bm x}
\newcommand{\bu}{\bm u}
\newcommand{\bv}{\bm v}
\newcommand{\bg}{\bm g}
\newcommand{\bS}{\bm \sigma}
\newcommand{\bD}{\bm D}
\newcommand{\bI}{\bm I}
\newcommand{\bepsilon}{\bm \varepsilon}
\begin{document}
    \begin{frontmatter}

\title{A deep neural network/meshfree method for solving dynamic two-phase interface problems}

\author[dlu]{Xingwen Zhu}
\ead{zxw4688@126.com}
\address[dlu]{School of Mathematics and Computer Sciences, Dali University, 2 Hongsheng Street, Dali, Yunnan 671003, China}

\author[tufts]{Xiaozhe Hu\corref{cor}}
\ead{xiaozhe.hu@tufts.edu}
\address[tufts]{Department of Mathematics, Tufts University, Medford, Massachusetts 02155, USA}

\author[unlv]{Pengtao Sun\corref{cor}}
\ead{pengtao.sun@unlv.edu}
\address[unlv]{Department of Mathematical Sciences, University of Nevada Las Vegas, 4505 Maryland Parkway, Las Vegas, Nevada 89154, USA}

\cortext[cor]{Corresponding author}

\begin{abstract}
In this paper, a meshfree method using the deep neural network (DNN) approach is developed for solving two kinds of dynamic two-phase interface problems governed by different dynamic partial differential equations (PDEs) on either side of the stationary interface with the jump and high-contrast coefficients. The first type of two-phase interface problem to be studied is the fluid-fluid (two-phase flow) interface problem modeled by Navier-Stokes equations with high-contrast physical parameters across the interface. The second one belongs to fluid-structure interaction (FSI) problems modeled by Navier-Stokes equations on one side of the interface and the structural equation on the other side of the interface, both the fluid and the structure interact with each other via the kinematic- and the dynamic interface conditions across the interface, where the structural equation can be either the wave-type PDE with respect to the structural displacement or the parabolic-like PDE with respect to the structural velocity. The DNN/meshfree method is respectively developed for the above two-phase interface problems by representing solutions of PDEs using the DNNs' structure and reformulating the dynamic interface problem as a least-squares minimization problem based upon a space-time sampling point set (as the training dataset). Mathematically, the presented two kinds of two-phase interface problems own significant differences in PDEs' theories, resulting in distinct characteristics in their respective DNN/meshfree approaches whose approximation error analyses are also carried out for each kind of interface problem, which reveals an intrinsic strategy about how to efficiently build a sampling-point training dataset to obtain a more accurate DNNs' approximation. In addition, compared with traditional discretization approaches (e.g., finite element/volume/difference methods), the proposed DNN/meshfree method and its error analysis technique can be smoothly extended to many other dynamic interface problems with fixed interfaces. Numerical experiments are conducted to illustrate the accuracies of the proposed DNN/meshfree method for the presented two-phase interface problems. Theoretical results are validated to some extent through three numerical examples.
\end{abstract}

\begin{keyword}
Deep neural network (DNN)\sep two-phase flow interface problem \sep
fluid-structure interaction (FSI) problem \sep meshfree method \sep
least-squares (LS) loss functional \sep approximation accuracy.
\end{keyword}
\end{frontmatter}

\section{Introduction}
Interface problems are ubiquitous in science and engineering applications. They model a variety of physical and chemical phenomena such as diffusion, electrostatics, heat transfer, fluid dynamics, elasticity, complex fluids, energy harvesting, and multi-phase flow in porous media (see, e.g., \cite{HANSBO20025537,doi:10.1137/0731054,doi:10.1137/130912700,MU2013106,peskin_2002,ZHOU20061}). Interface problems usually involve interfaces of complex geometries and strong physical interactions between two or more phases (e.g., fluid-fluid, fluid-structure, or structure-structure). Due to the complex nature of interface problems, numerical simulations are often the only possible way for scientific discovery, and numerical methods for solving such interface problems must be able to handle interfaces, interactions between different phases, and singularities of solutions due to high-contrast coefficients across interfaces accurately and efficiently.

One major approach to solve interface problems is the mesh-based numerical method that includes interface-fitted mesh methods, e.g., classical finite element methods~\cite{babuvska1970finite,bramble1996finite,Chen.Z;Zou.J1988a}, discontinuous Galerkin methods~\cite{massjung2012unfitted,cai2011discontinuous}, virtual element methods \cite{chen2017interface}, etc., and interface-unfitted mesh methods, e.g., immersed boundary methods \cite{peskin_2002}, immersed interface methods~\cite{doi:10.1137/0731054,Li.Z;Lai.M2001a}, immersed finite volume methods~\cite{OEVERMANN20095184,10.1007/978-3-540-78827-0_78}, matched interface and boundary methods \cite{ZHOU20061}, ghost fluid methods~\cite{liu2000boundary}, extended finite element methods~\cite{fries2010extended}, cut finite element methods~\cite{burman2015cutfem}, and immersed finite element methods~\cite{doi:10.1137/130912700}. The aforementioned methods have been extended to more complicated interface problems, e.g.~\cite{li2003overview, khoei2014extended,massing2015nitsche}. Although mesh-based methods have been successful in solving interface problems to a certain extent, the implementation of those numerical schemes is not a straightforward task due to jump conditions on the interface. In practice, interface problems remain quite difficult because of the complicated geometry of interfaces that oftentimes are dynamically changing, as well as the singularities introduced by interface conditions and jump coefficients.

Another approach to solve interface problems is the so-called meshfree method. It is attractive due to its ability to circumvent costly management of the complex geometry of interfaces. Meshfree methods have been applied to computational solid mechanics~\cite{liu2016overview}, fracture mechanics~\cite{daxini2014review}, and fluid-structure interaction~\cite{mazhar2021meshfree,hu2019consistent,hu2019spatially}. A comprehensive review of applications of meshfree methods can be found in~\cite{garg2018meshfree}. Recently, a special type of meshfree method, which uses the deep neural networks (DNNs) for approximations, has drawn increasing attention for solving PDEs, especially those challenging ones which cannot be handled by existing numerical methods robustly and efficiently, e,g.,~\cite{CaiChenLiuLiu2019,Dissanayake1994,EYu,HanJentzenE,HeLiXuZheng,SamaniegoGoswami,SirignanoSpilopoulos,TranHamiltonMckayQuiringVassilevski}. More recently, DNNs have been used to solve second-order elliptic interface problems, see~\cite{EYu,CaiChenLiuLiu2019,WangZhang,HeLinHu2022}. These methods are based on rewriting the second-order elliptic interface problem as a minimization problem. In particular, a specific type of least-squares (LS) formulation is used to define the total loss functional, and DNN structures are applied to approximate the solution of the interface problem in each subdomain separated by the interface.

In this work, motivated by the previous work~\cite{HeLinHu2022}, we consider studying general dynamic two-phase interface problems by developing corresponding DNN/meshfree methods. In contrast to the standard second-order elliptic interface problem, the interface problems to be studied in this paper involve different types of PDEs in each subdomain to describe interactions between different kinds of phases (e.g., fluid-fluid or fluid-structure). Thus various kinds of interface conditions are employed across the stationary interface. Following the idea from~\cite{HeLinHu2022}, we can use different NN structures in different subdomains to approximate solutions to two-phase interface problems. This approach provides flexibility to handle different types of PDEs in different subdomains and sophisticated interface conditions across interfaces that make different physical quantities interact with each other. In particular, we rewrite the dynamic interface problem into an LS formulation and then define a discrete LS problem via sampling points in the entire computational domain that is actually a space-time domain in $\mathbb{R}^{d+1}\ (d=2,3)$, where residuals of the PDEs defined in each $(d+1)$-dimensional interior subdomain, of the boundary conditions defined on all $d$-dimensional boundaries, of the interface conditions defined on all $d$-dimensional interfaces, and of the initial conditions defined in each initial subdomain of dimension $d$, are reformulated respectively through the (discrete) LS formulation, then are added up to form a total (discrete) loss functional. Piecewise NN structures are used to approximate solutions of different PDEs in each subdomain. Finally, the discrete LS problem can be solved by the stochastic gradient descent (SGD) method \cite{Robbins1951} or its variants.

Following the recent theoretical results~\cite{mishra2022estimates}, we also provide an error analysis of the proposed DNN/meshfree method for each studied interface model problem. Our error estimates decompose the approximation error into two parts. One is the quadrature error since we replace all integral terms in the LS type loss functional by quadrature rules to derive the corresponding discrete LS problem. Another part is the optimization error caused by approximately solving the discrete LS problem in practice. According to our analysis, when the optimization error part is negligible, the overall approximation error is usually dominated by the quadrature errors coming from the LS formulations of boundary and initial conditions~\cite{mishra2022estimates}, as well as from the LS formulation of interface conditions which was previously ignored in the literature since sampling points on the interface are simply taken from their adherence to the interior sampling-point training set inside the space-time domain. Thus, much fewer sampling points regarding interface conditions are usually used to solve the discrete LS type loss functional for interface problems, leading to an error dominance in the DNN's approximation. Our numerical experiments verify our theoretical findings by carrying out the proposed DNN/meshfree approach for the two-phase flow interface problem and the FSI problem in two different formulations,  where we can observe that the DNN's approximation errors decrease when we increase the number of sampling points on the interfaces, the boundaries, and in the initial subdomains while keeping the number of sampling points inside the space-time domain relatively large. Thus, the effectiveness and capability of the developed DNN/meshfree approach for solving complicated interface problems are validated.

The rest of the paper is organized as follows. In Section~\ref{sec:model} we introduce notations and definitions for a general model of dynamic interface problems to be studied and introduce two concrete examples: the two-phase flow interface problem and the FSI problem in two different formulations. The DNN/meshfree method for solving the presented interface problems is then introduced in Sections~\ref{sec:DNN-method} and \ref{sec:error}, where we also provide a framework for the error analysis of the proposed DNN/meshfree method for each studied dynamic interface problem. Finally, in Section~\ref{sec:numerics}, we present numerical experiments using three numerical examples to illustrate the effectiveness of the proposed DNN/meshfree method for solving two kinds of dynamic two-phase interface problems. Concluding remarks and a discussion of future work are given in Section~\ref{sec:conclusions}.

\section{Model description} \label{sec:model}
\subsection{Models of interface problems}
In this paper, we start our studies on the DNN/meshfree method for general dynamic interface problems with fixed interfaces, which has wide applications in different areas, such as in material sciences and fluid dynamics, while discontinuous coefficients are encountered across the interface, or, more complicated than that, different types of governing equations are defined on either side of the interface interacting with each other through interface conditions. It happens when two distinct materials/phases (fluid or structure) involve different conductivities/densities/viscosities/diffusions. One typical example is the two-phase flow interface problem, which refers to an interactive flow of two distinct phases with common interfaces in a channel/container. Each phase represents a mass or volume of matter. An immiscible, incompressible two-phase flow interface problem without a phase change can be modeled by two sets of Navier-Stokes equations with the jump and high-contrast coefficients across the interface. Another example is the fluid-structure interaction problem that couples fluid dynamics with structural mechanics through interface conditions defined on interfaces between the fluid and the structure, which is modeled by Navier-Stokes equations and the elasticity equation on either side of the interfaces, respectively, while interactional effects between the fluid and the structure are described by the kinematic (continuity of velocity) and the dynamic (balance of force) interface conditions across the interface.

For the sake of generality, we consider the following abstract interface model problem, which couples two types of PDEs (possibly nonlinear) and whose differential operators are denoted by $\mathcal{L}_1$ and $\mathcal{L}_2$, respectively,
\begin{equation}\label{eqn:interface-model}
\left\{
    \begin{array}{rcll}
        \mathcal{L}_i(\bm{u}_i) &=& \bm{0}, &  \text{in} \ \Omega_i \times (0,T], \ i = 1,2, \\
        \mathcal{L}_{\Gamma}(\bm{u}_1, \bm{u}_2) &=& \bm{0}, & \text{on} \ \Gamma \times [0, T], \\
        \mathcal{B}_i(\bm{u}_i) &=& \bm{0},  & \text{on}  \ \partial \Omega_i \backslash \Gamma \times [0,T], \ i = 1,2, \\
        \mathcal{I}_i(\bm{u}_i) &=& \bm{0}, & \text{in} \ \Omega_i,\ i=1,2,
    \end{array}
\right.
\end{equation}
where $\Omega$ is an open bounded domain in $\mathbb{R}^d\ (d=2,3)$ with a convex polygonal boundary $\partial\Omega$, and consists of two subdomains $\Omega_i\subset \Omega\ (i=1,2)$ separated by a stationary interface $\Gamma$, satisfying that ${\Omega}={\Omega_1}\cup{\Omega_2}$, $\Omega_1\cap\Omega_2=\emptyset$, $\Gamma=\partial\Omega_1\cap\partial\Omega_2$. In addition, $\mathcal{L}_{\Gamma}$ represents the differential operator of abstract interface conditions on $\Gamma\times [0,T]$ which could be nonlinear, $\mathcal{B}_{i}$ and $\mathcal{I}_{i}$ denote differential operators of abstract boundary conditions on $\partial \Omega_i \backslash \Gamma\times [0,T]\ (i=1,2)$, and of abstract initial conditions in $\Omega_i\ (i=1,2)$ at $t=0$, respectively. Possible configurations are illustrated in Figure \ref{fig:domain}.

\begin{figure}[hbt]
    \begin{center}
        \includegraphics[height=3cm]{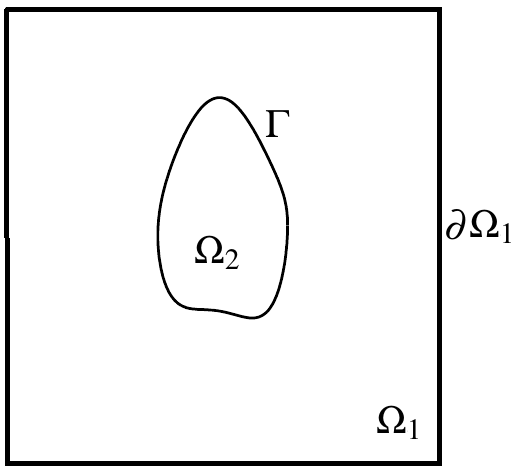}
        \hspace{2cm}
        \includegraphics[height=3cm]{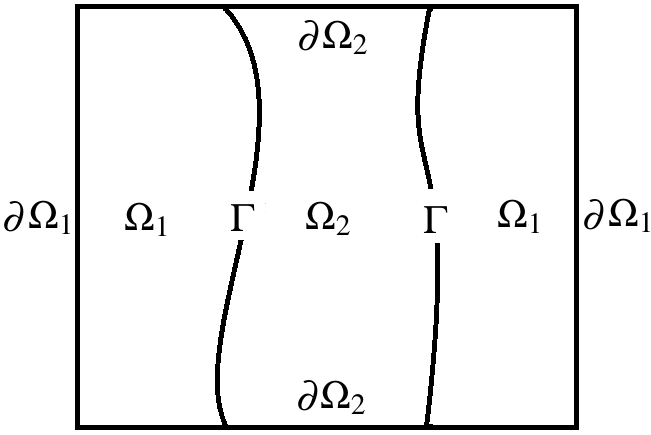}
    \end{center}
    \caption{Two schematic domain decompositions divided by the
        interface $\Gamma$: the immersed case (\textbf{left}) and the
        back-to-back case (\textbf{right}).}\label{fig:domain}
\end{figure}

Although the abstract model~\eqref{eqn:interface-model} consists of two different PDEs that are respectively defined in two subdomains through interface conditions across one interface, we want to point out that our proposed numerical approach is flexible and can be easily extended to the case where more than two  PDEs in their respective subdomains are coupled together with linear or nonlinear interface conditions across multiple interfaces. Moreover, it is also possible to consider the case where PDEs are defined on different dimensions, i.e., mixed-dimensional interface problems. For the sake of simplicity, we just choose the simple abstract model~\eqref{eqn:interface-model} to propose the numerical methodology of our DNN/meshfree method in this paper.

In what follows, we use the standard notation for Sobolev spaces~$W^{l,p}(\Psi)\ (0\leq l< \infty,1\leq p\leq\infty)$ and their associated norms $\| \cdot \|_{W^{l,p}(\Psi)}$.  For $p=2$, we use the standard notation $W^{l,2}(\Psi)=H^l(\Psi)$.  When $l=0$, $H^0(\Psi)$ coincides with the standard $L^2$ space, i.e., $L^2(\Psi)$.  Since we consider time-dependent problems, we also use the standard notation for the time-dependent Sobolev spaces $W^{m,q}([0,T];W^{l,p}(\Psi))\ (0\leq m<\infty, 1\leq q \leq \infty)$.  When $p=q=2$, we use the standard notation $H^m([0,T]; H^l(\Psi))$. Furthermore, if $m=0$, we use the notation $L^2([0,T]; H^l(\Psi))$.  In addition, we also allow $\Psi$ to be a space-time domain $\mathfrak{D}$ and the corresponding space-time Sobolev spaces' notation follow the same convention. Finally, we also consider the space of the continuously differentiable functions defined in a space-time domain $\mathfrak{D}$ and use the standard notation $C^k(\mathfrak{D}) \ (0 \leq k \leq \infty)$ with the associated norm $\| \cdot \|_{C^k(\mathfrak{D})}$.

\subsection{Examples of interface problems} \label{sec:model:subsec:example}
Now we present two concrete examples of interface problems described by the abstract model \eqref{eqn:interface-model}. The first example is a two-phase flow interface problem that couples two sets of Navier-Stokes equations in two subdomains with suitable interface conditions across the interface. The second example is the FSI problem that consists of Navier-Stokes equations on one side of the interface and the elasticity equation on the other side of the interface, and, both the fluid and the structure interact with each other through the kinematic and the dynamic interface conditions defined on the fluid-structure interface. These two kinds of problems are typical examples of dynamic two-phase interface problems and have a wide range of applications in practice (see, e.g., \cite{Wang;Sun2016,Takizawa2011b,Chakrabarti2005,Dowell2001,Seo2013,Loon2005,Turek.S;Hron.J2010a,Turek.S;Hron.J2006a}).

\subsubsection{The two-phase flow interface problem} \label{sec:NS-NS-example}
The first example we consider is the two-phase flow interface problem. In this case, two sets of Navier-Stokes equations are coupled together through proper interface conditions with jump coefficients across the interface. More precisely, the positive, jump piecewise constants $\rho|_{\Omega_i}=\rho_i,\ \mu|_{\Omega_i}=\mu_i\ (i=1,2)$ represent the density and the dynamic viscosity of two kinds of fluids, respectively, leading to different velocities, $\bm{v}_i$, and different pressures, $p_i$, in $\Omega_i\ (i=1, 2)$, respectively. The stress rate tensor of every kind of fluid is defined as follows,
$$
\bS_i:=-p_i\bI+2\mu_i\bD(\bv_i),\quad\text{where }
\bD(\bv_i):=\frac{1}{2}\left(\nabla\bv_i+(\nabla\bv_i)^T\right)
\quad \text{for }i=1,2.
$$
Introduce $\bm{u}_1 := (\bv_1, p_1)$ and $\bm{u}_2 := (\bv_2, p_2)$. $\mathcal{L}_i\ (i=1, 2)$, which represent differential operators of Navier-Stokes equations for each fluid phase in $\Omega_i\times[0,T]\ (i=1, 2)$, are defined below,
\begin{equation*}
    \mathcal{L}_i(\bm{u}_i) :=
    \begin{pmatrix}
        \rho_i \left(\frac{\partial\bv_i}{\partial
            t}+\bv_i\cdot\nabla\bv_i\right)-\nabla\cdot\bS_i - \bff_i \\
        \nabla \cdot \bv_i
    \end{pmatrix},
\qquad i = 1,2.
\end{equation*}
The differential operator of interface conditions on~$\Gamma\times[0,T]$, $\mathcal{L}_{\Gamma}$, is defined as
\begin{equation} \label{eqn:two-phase-flow-interface}
    \mathcal{L}_{\Gamma}(\bm{u}_1, \bm{u}_2) :=
    \begin{pmatrix}
        \bv_1 - \bv_2 - \bg_1 \\
        \bS_1\bn_1+\bS_2\bn_2 - \bg_2
    \end{pmatrix},
\end{equation}
where $\bn_1=-\bn_2$ is the unit outward normal vector on the interface $\Gamma$ pointing to $\Omega_2$ from $\Omega_1$ (see Figure~\ref{fig:domain}). Here, the first component of $\mathcal{L}_{\Gamma}$, which comes from the kinematic interface condition, describes the continuity ($\bm{g}_1 = \bm{0}$) or the jump ($\bm{g}_1 \neq \bm{0}$) of velocities, and, the second component of $\mathcal{L}_{\Gamma}$, coming from the dynamic interface condition,  describes the continuity ($\bm{g}_2 = \bm{0}$) or the jump ($\bm{g}_2 \neq \bm{0}$) of the flux across the interface $\Gamma$. In many realistic two-phase flow interface problems, we usually take $\bm{g}_1 = \bm{g}_2 = \bm{0}$ and then~\eqref{eqn:two-phase-flow-interface} becomes the so-called no-slip interface conditions.   As for the slip-type interface conditions, they are defined as $\bg_1\cdot\bn_1= 0$ while $\bm{g}_1 - (\bm{g}_1 \cdot \bm{n_1})\bm{n}_1 \neq \bm{0}$ and $\bg_2\neq \bm{0}$, in general.

For the simplicity, we consider the standard Dirichlet boundary conditions on the rest boundaries~$\partial\Omega_i\backslash\Gamma\times[0,T]\ (i=1,2)$, whose differential operator is defined as
\begin{equation*}
    \mathcal{B}_i(\bm{u}_i) := \bm{v}_i - \bm{v}_i^b, \quad i = 1,2,
\end{equation*}
and consider the standard initial condition in $\Omega_i\ (i=1,2)$ at $t=0$, whose differential operator is defined as
\begin{equation*}
    \mathcal{I}_i(\bm{u}_i) := \bm{v}_i(\bm{x}_i, 0) - \bm{v}_i^0(\bm{x}_i), \quad i = 1,2.
\end{equation*}
Of course, other types of boundary conditions (such as Neumann/Robin conditions) could also be specified, depending on the realistic scenarios.

\subsubsection{The fluid-structure interaction problem} \label{sec:FSI-example}
The second example we consider is the FSI problem that usually models the interactions between a free viscous fluid flow and an elastic structure, giving rise to a wide variety of physical phenomena with applications in many fields of science and engineering, such as the vibration of rotating turbine blades impacted by the fluid flow, the response of bridges and tall buildings to winds, the floating parachute wafted by the air current, the blood flow through arteries, etc. FSI problems are challenging for numerical simulations because physical parameters across the interface might be discontinuous and high-contrast, governing equations from either side of the interface are completely different, and more difficultly, fluid-structure interfaces are usually moving (deforming/translating/rotating) all the time. Nevertheless, as the first step of our study on the DNN/meshfree method for solving FSI problems, we assume the interface between the fluid and the structure is stationary only in this paper. As for the DNN/meshfree method for the FSI problem with a moving interface, it is more challenging  since the undetermined  moving interface breaks the connectivity of the sampling points set in each subdomain along the time, which introduces difficulties to the time difference scheme while DNN is used in the spatial approximation only. We will report our achievements on this topic in our subsequent work, comprehensively.

As illustrated in Figure \ref{fig:domain}, we let $\Omega_1 := \Omega_f$ and $\Omega_2 := \Omega_s$ denote the fluid subdomain and the structural subdomain, respectively. The open bounded domain ${\Omega}={\Omega_f}\cup{\Omega_s}\subset \mathbb{R}^d \ (d=2,3)$ is equipped with a convex polygonal boundary $\partial\Omega$. Moreover, $\Omega_f\cap\Omega_s=\emptyset$, and their boundaries share the common interface $\Gamma:=\partial\Omega_f\cap\partial\Omega_s$. In addition, we let $\bm{u}_1 :=\bu_f:= (\bv_f, p_f)$ denote the fluid velocity and the fluid pressure defined in $\Omega_f$, and $\bm{u}_2 := \bu_s$ denote the structural displacement defined in $\Omega_s$. Therefore, we introduce the following differential operator of the usual Navier-Stokes equations in $\Omega_f$,
\begin{equation*}
    \mathcal{L}_1(\bm{u}_1) := \mathcal{L}_f(\bu_f)  :=
    \begin{pmatrix}
        \rho_f\left(\frac{\partial\bv_f}{\partial
            t}+\bv_f\cdot\nabla\bv_f\right)-\nabla\cdot\bS_f - \bff_f\\
        \nabla\cdot\bv_f
    \end{pmatrix},
\end{equation*}
where $\bff_f$ is the external force in $\Omega_f$, the fluid stress rate tensor, $\bS_f$, is defined as
$$
\bS_f:=-p_f\bI+2\mu_f\bD(\bv_f),\quad\text{and }
\bD(\bv_f):=\frac{1}{2}\left(\nabla\bv_f+(\nabla\bv_f)^T\right),
$$
where positive constants $\rho_f$ and $\mu_f$ are the density and the dynamic viscosity of the fluid, respectively.

As for the structural equation, one possible choice is the following differential operator of wave equation with respect to the structural displacement $\bm{u}_s$,
\begin{equation}\label{structure_wave}
    \mathcal{L}_2(\bm{u}_2) := \mathcal{L}_s(\bm{u}_s) := \rho_s\frac{\partial^2 \bu_s}{\partial t^2}-\nabla\cdot{\bS}_s -
    \bff_s,
\end{equation}
where $\rho_s$ is the structural density and $\bff_s$ the external force in $\Omega_s$. The structural (Cauchy) stress tensor, $\bS_s$, is defined in terms of the structural displacement $\bu_s$, i.e.,
$$
\bS_s(\bu_s):=2\mu_s\bepsilon(\bu_s)+\lambda_s\text{tr}(\bepsilon(\bu_s))\bI,\quad\text{and
}
\bepsilon(\bu_s):=\frac{1}{2}\left(\nabla\bu_s+(\nabla\bu_s)^T\right),
$$
where $\lambda_s$ and $\mu_s$ are Lam\'e's first and second constants, and~$\text{tr}(\bepsilon(\bu_s))=\nabla\cdot\bu_s$.

In this case, the coupling of $\mathcal{L}_f$ and $\mathcal{L}_s$ essentially forms the differential operators of governing equations for a Navier-Stokes/wave interface problem. Therefore, the corresponding interface conditions' differential operator can be defined as follows,
\begin{equation*}
	\mathcal{L}_{\Gamma}(\bm{u}_1, \bm{u}_2) :=
	\begin{pmatrix}
		\bv_f-\frac{\partial\bu_s}{\partial t} - \bg_1 \\
		\bS_f\bn_f+\bS_s\bn_s - \bg_2
	\end{pmatrix}.
\end{equation*}
Note that two components of $\mathcal{L}_{\Gamma}$ define the differential operator of the kinematic and dynamic interface conditions of FSI, respectively, which describe the continuity ($\bg_1= \bm{0}$) or the jump ($\bg_1\neq \bm{0}$) of fluid and structural velocities and the balance ($\bg_2=0$) or the jump ($\bg_2\neq \bm{0}$) of traction forces across the fluid-structure interface $\Gamma$. In addition, differential operators of Dirichlet boundary conditions on the rest boundaries and of standard initial conditions in the initial subdomains are defined as follows,
\begin{align*}
    &\mathcal{B}_1(\bm{u}_1) := \mathcal{B}_f(\bm{u}_f) := \bv_f - \bv_f^b,  \quad \text{on} \ \partial \Omega_f\backslash \Gamma \times [0,T], \\
    &\mathcal{B}_2(\bm{u}_2) := \mathcal{B}_s(\bm{u}_s) := \bu_s - \bu_s^b, \quad \text{on} \ \partial \Omega_s \backslash \Gamma \times
    [0,T],\\
    &\mathcal{I}_1(\bm{u}_1) := \mathcal{I}_f(\bm{u}_f):= \bv_f(\bx_f,0) - \bv_f^0, \quad \text{in} \ \Omega_f, \\
    &\mathcal{I}_2(\bm{u}_2) := \mathcal{I}_s(\bm{u}_s) :=\begin{pmatrix}
            \mathcal{I}_{s, \bu}(\bm{u}_s) \\
            \mathcal{I}_{s, \bv}(\bm{u}_s)
        \end{pmatrix}
    :=\begin{pmatrix}
            \bu(\bx_s,0) - \bu^0_s \\
            \frac{\partial\bu_s}{\partial t} - \bv_s^0
        \end{pmatrix}, \quad \text{in} \ \Omega_s.
\end{align*}

On the other hand, instead of the structural displacement $\bu_s$, it is also possible to use the structural velocity $\bm{v}_s$ as the primary unknown to define the FSI model. Note that $\bv_s=\frac{\partial\bu_s}{\partial t}$ in $\Omega_s$, we can reformulate the differential operator of structural equation, $\mathcal{L}_s$, as follows using $\bm{u}_2 := (\bu_s, \bv_s)$,
\begin{eqnarray}\label{structure_par}
    \mathcal{L}_2(\bm{u}_2) :=
 \mathcal{L}_s(\bm{u}_2)  :=
 \begin{pmatrix}
 \rho_s\frac{\partial{\bm v}_s}{\partial
        t}-\nabla\cdot\bS_{s}(\bu_s) - {\bff}_s \\
    \frac{\partial \bu_s}{\partial t} - \bv_s
 \end{pmatrix},
\end{eqnarray}
In this case, the structural equation is actually a parabolic-like equation with respect to $\bv_s$, where $\bu_s=\int_0^t\bv_s(\tau)d\tau+\bu^0_s$~\cite{Wang;Sun2016,JLiu2016}. Accordingly, the differential operators of interface conditions, boundary conditions, and initial conditions need to be rewritten as follows,
\begin{equation*}
\begin{array}{l}
    \mathcal{L}_{\Gamma}(\bm{u}_1, \bm{u}_2) :=
    \begin{pmatrix}
        \bv_f-\bv_s - \bg_1 \\
        \bS_f\bn_f+\bS_s\bn_s - \bg_2,
    \end{pmatrix}, \
    \mathcal{B}_2(\bm{u}_2) := \mathcal{B}_s(\bm{u}_2) := \bv_s -
    \bv_s^b,\\
\
    \mathcal{I}_2(\bm{u}_2) :=
    \mathcal{I}_s(\bm{u}_2) :=\begin{pmatrix}
            \mathcal{I}_{s, \bu}(\bm{u}_s) \\
            \mathcal{I}_{s, \bv}(\bm{v}_s)
        \end{pmatrix}:=
    \begin{pmatrix}
        \bu_s(\bx_s,0) - \bu_s^0 \\
        \bv_s(\bx_s,0) - \bv_s^0. 
    \end{pmatrix}.
    \end{array}
\end{equation*}
Thus, a Navier-Stokes/parabolic-like interface problem is formed. In this paper, we consider both the Navier-Stokes/wave-type and the Navier-Stokes/parabolic-like interface problem as FSI models, which are often used to describe FSI problems. And through these two FSI models, we demonstrate that our proposed DNN/meshfree method is flexible and can easily handle different formulations of the FSI model.

\section{DNN/meshfree method for the interface problem} \label{sec:DNN-method}
This section introduces a type of DNN method for solving the abstract interface problem~\eqref{eqn:interface-model}. Similar to existing methods for solving PDEs via neural networks~\cite{karniadakis2021physics,shukla2022scalable}, our method can be viewed as a meshfree approach with more flexibility to handle interfaces and interface conditions than the standard numerical methods for solving PDEs, such as finite element methods. Following our recent work~\cite{HeLinHu2022}, the main idea is to rewrite the interface problem as a minimization problem using LS formulation and use different DNN structures to approximate solutions in different subdomains. For the abstract interface problem~\eqref{eqn:interface-model}, we use two DNN structures suitable for different interface problems with large jumps in their solutions and derivatives of their solutions. Our DNN/meshfree approach allows us to remove the computational mesh that turns out to be troubl]esome for both body-fitted and body-unfitted mesh methods in numerical simulations of interface problems and to easily handle complicated interfaces by adequately choosing the sampling points without the needs for an underlying mesh and an accurate representation of the interface respective to the mesh.

\subsection{The least-squares formulation}
To apply the DNN approach to the abstract interface problem~\eqref{eqn:interface-model}, it is natural to rewrite PDEs as a minimization problem. There are different ways to do this; see~\cite{Dissanayake1994,bochev2009least}.  In this work, we adopt the least-squares (LS) formulation \cite{Dissanayake1994}  which minimizes residuals of the abstract interface problems~\eqref{eqn:interface-model} by defining an LS functional that incorporates all equations introduced in~\eqref{eqn:interface-model} together.  This approach has been widely used in physics-informed neural networks (PINN)~\cite{karniadakis2021physics} for various PDEs, including the elliptic interface problem~\cite{HeLinHu2022}.

In particular, considering the abstract model problem~\eqref{eqn:interface-model}, we define a space-time LS formulation as follows,
\begin{equation}\label{eqn:LS}
\mathcal{R}(\bm{\tilde{u}}_1, \bm{\tilde{u}}_2) := \int_{0}^T \left(
\sum_{i=1}^2 \omega_{\mathcal{L}_i} \|
\mathcal{L}_i(\bm{\tilde{u}}_i) \|_{0,\Omega_i}^2 + \omega_{\Gamma}
\|\mathcal{L}_{\Gamma}(\bm{\tilde{u}}_1, \bm{\tilde{u}}_2) \|_{0,
\Gamma}^2 + \sum_{i=1}^{2}\omega_{\mathcal{B}_i} \|
\mathcal{B}_i(\bm{\tilde{u}}_i) \|_{0, \partial\Omega_i \backslash
\Gamma}^2 \right) \, \mathrm{d}t + \sum_{i=1}^{2}
\omega_{\mathcal{I}_i} \| \mathcal{I}_i(\bm{\tilde{u}}_i)
\|_{0,\Omega_i}^2,
\end{equation}
where $\omega_{\mathcal{L}_i}$, $\omega_{\Gamma}$, $\omega_{\mathcal{B}_i}$, and $\omega_{\mathcal{I}_i}$ are prescribed weight coefficients of each corresponding term. Then, the LS solution associated with the LS functional~\eqref{eqn:LS} is to find $\bm{u}_1 \in \mathbb{V}_1$ and $\bm{u}_2 \in \mathbb{V}_2$ such that
\begin{equation}\label{eqn:minLS}
\mathcal{R}(\bm{u}_1,\bm{u}_2)\notag =\min\limits_{\bm{\tilde{u}}_1
\in \mathbb{V}_1, \bm{\tilde{u}}_2 \in \mathbb{V}_2}
\mathcal{R}(\bm{\tilde{u}}_1,\bm{\tilde{u}}_2),
\end{equation}
where $\mathbb{V}_1$ and $\mathbb{V}_2$ are suitable spaces to which the LS solution belongs.

\begin{remark}\label{rmk:weights}
An optimal choice for all weight coefficients $\omega_i$ in~\eqref{eqn:LS} can make the LS optimization process converge faster, i.e., accelerate the minimization of each LS term in~\eqref{eqn:LS} by letting it approach zero as quickly as possible. In practice, a possible way to choose each weight coefficient associated with its $L^2$-inner product or LS term is the reciprocal of the maximum value of all parameters in this LS term, just like a normalization process. On the other hand, a unified dimension for all LS terms in the total LS functional is another important consideration for choosing these weight coefficients to accelerate the LS minimization for a coupled PDEs that bears high-contrast magnitudes for different variables. Overall, those weights play an essential role and can be predetermined or self-learned, see, e.g.~\cite{wang2021understanding,wang2022and,mcclenny2020self}.
\end{remark}

\subsection{The DNN/meshfree method}
Our meshfree method adopts DNN structures to approximate the abstract interface problem~\eqref{eqn:interface-model} in each space-time subdomain and on boundaries/interfaces/initial subdomains without physically generating any mesh. More precisely, we employ fully connected DNN structures in a $(d+1)$ dimensional space (in the space-time sense) to approximate primary variables (velocity, pressure, etc.) of the interface problem~\eqref{eqn:interface-model} and minimize the LS formulation~\eqref{eqn:LS} via sampling spatial and temporal points together in the computational space-time domain. Our approach follows the idea proposed in~\cite{HeLinHu2022} and uses two neural networks to approximate solutions in two subdomains. This subsection briefly reviews the fully connected DNN structure and then introduces the discrete LS formulation for \eqref{eqn:interface-model} based on DNN structures.

We use a simple DNN structure here which consists of multiple linear transformations and nonlinear activation functions.  A linear transformation $\mathbf{T}^l:\mathbb{R}^{n_l}\rightarrow \mathbb{R}^{n_{l+1}} \ (l=1,\cdots,L)$ is define as follows,
$$
\mathbf{T}^l(\bX^l):=\bW^l\bX^l+\bb^l, \quad\text{ for } \bX^l\in
\mathbb{R}^{n_l},
$$
where $\bW^l \in \mathbb{R}^{n_{l+1} \times n_l}$ denotes the weights and $\bb^l \in \mathbb{R}^{n_{l+1}}$ denotes the bias. Together with an activation function $\sigma:\mathbb{R}\rightarrow\mathbb{R}$, the {$l$}-th layer neural network, $\mathbf{\bN}^l:\mathbb{R}^{n_l}\rightarrow \mathbb{R}^{n_{l+1}}$, is defined as follows,
$$
\mathbf{\bN}^l(\bX^l):=\sigma(\mathbf{T}^l(\bX^l)), \quad \text{ for
} l=1,\cdots,L,
$$
where the nonlinear activation function $\sigma$ is applied in a component-wise fashion.  Then a general {$L$}-layer DNN is defined as
\begin{equation} \label{eqn:NN-structure}
\mathbf{\bN\bN}(\bX;\Theta):=\mathbf{T}^L\circ{\mathbf{\bN}}^{L-1}\circ\cdots\circ{\mathbf{\bN}}^2\circ{\mathbf{\bN}}^1(\bX),
\end{equation}
where $\bX\in \mathbb{R}^{n_1}$ denotes the input (training set) of the neural network, $\Theta$ stands for all the weights $\bW^l$ and the bias $\bb^l\ (l=1,\cdots,L)$ arising from all $L$ layers, i.e., $\Theta:=\{\bW^l,\bb^l,\ l=1,\cdots,L\}$. Thus, {$\Theta\in \mathbb{R}^N$}, and {$N=\sum\limits_{l=1}^Ln_{l+1}(n_l+1)$} for a fully connected DNN.

In our meshfree method for solving the interface problem~\eqref{eqn:interface-model}, we use the fully connected DNN structure~\eqref{eqn:NN-structure} to approximate solutions of PDEs. In particular, since we use a space-time LS formulation~\eqref{eqn:LS}, the input of the neural network should be {$\bX:=(\bx,t)\in\overline{\Omega}\times[0,T]\subset \mathbb{R}^{d+1}$}, thus we adopt $n_1=d+1$ for the DNN structure~\eqref{eqn:NN-structure}, and define the following DNN functions to approximate $\bm{u}_1$ and $\bm{u}_2$ in~\eqref{eqn:interface-model}, respectively,
\begin{equation} \label{eqn:uNNs}
    \bm{u}_1 \approx \mathcal{U}_{\mathbf{\bN\bN}}^{1} (\bX^{(1)};\Theta),
    \quad \bm{u}_2 \approx \mathcal{U}_{\mathbf{\bN\bN}}^{2}
    (\bX^{(2)};\Theta),
\end{equation}
where
$\bX^{(i)}:=(\bx_i,t)\in\overline{\Omega_i}\times[0,T]\in\mathbb{R}^{d+1}$,
$i=1,2$, and $\bX \vert_{\overline{\Omega_i}\times[0,T]} =
\bX^{(i)}$.

Naturally, based on the LS formulation~\eqref{eqn:LS} and the DNN approximation~\eqref{eqn:uNNs}, a DNN/meshfree method is defined as to find $\mathcal{U}_{\mathbf{\bN\bN}}^{1} (\bX^{(1)};\Theta)$ and~$\mathcal{U}_{\mathbf{\bN\bN}}^{2} (\bX^{(2)};\Theta)$ such that
\begin{equation}\label{minLoss-twophase}
\min\limits_{\Theta\in\Psi^N}\mathcal{R}(\bX;\Theta),\quad
\text{where }\Psi^N:=\{\Theta:\Theta|_{\overline{\Omega_i}\times
[0,T]}\in \mathbb{R}^N,\ \overline\Omega_1\cup
\overline\Omega_2:=\overline{\Omega}\},
\end{equation}
and the total loss functional, $\mathcal{R}(\bX;\Theta)$, according to the LS functional~\eqref{eqn:LS}, is defined as
\begin{equation}\label{Loss-twophase}
\begin{array}{rcl}
\mathcal{R}(\bX;\Theta)
:=\mathcal{R}(\mathcal{U}_{\mathbf{\bN\bN}}^{1}(\bm{X};
\Theta),
\mathcal{U}_{\mathbf{\bN\bN}}^{2}(\bX; \Theta)).
\end{array}
\end{equation}
Since the loss functional~\eqref{Loss-twophase} contains the space-time integral and cannot be computed exactly, we approximate the space-time integral by its discrete counterpart based on a set of sampling points, i.e., by means of the Monte Carlo integration. More precisely, we introduce the following mean square error (MSE) formulations to approximate corresponding space-time integrals in~\eqref{eqn:LS},
\begin{equation}\label{eqn:MSEs}
\begin{array}{cc}
    \mathcal{F}_{\mathcal{L}_i}(\Theta):=\frac{1}{M_{\mathcal{L}_i}}\sum_{k=1}^{M_{\mathcal{L}_i}}|\mathcal{L}_i(\bX_{k};\Theta)|^2, & \mathcal{F}_{\Gamma}(\Theta):=\frac{1}{M_{\Gamma}}\sum_{k=1}^{M_{\Gamma}}|\mathcal{L}_{\Gamma}(\bX_{k};\Theta)|^2,  \\
    \mathcal{F}_{\mathcal{B}_i}(\Theta):=\frac{1}{M_{\mathcal{B}_i}}\sum_{k=1}^{M_{\mathcal{B}_i}}|\mathcal{B}_i(\bX_{k};\Theta)|^2,
    &
    \mathcal{F}_{\mathcal{I}_i}(\Theta):=\frac{1}{M_{\mathcal{I}_i}}\sum_{k=1}^{M_{\mathcal{I}_i}}|\mathcal{I}_i(\bX_{k};\Theta)|^2,
\end{array}
\end{equation}
where $M_{\mathcal{L}_i}$, $M_{\Gamma}$, $M_{\mathcal{B}_i}$, and $M_{\mathcal{I}_i}$ are the number of sampling points used for different parts of the total loss functional $\mathcal{R}(\bX;\Theta)$, i.e., for the interior part, the interface part, the boundary part and the initial domain part of the space-time domain, $\overline{\Omega}\times[0,T]$, respectively. If one of them equals to $0$, then the corresponding loss functional or the MSE is dropped. {$M= \sum\limits_{i=1}^2 (M_{\mathcal{L}_i} + M_{\mathcal{B}_i} + M_{\mathcal{I}_i})+M_{\Gamma}$} is the total size of the sampling point set. By trying not to abuse notations, here we use $\bX_k\ (k=1,2,\cdots)$ to represent sampling points and drop the dependence of $\bX$ on the discrete level. Thus, the total discrete loss functional is defined as follows,
$$
\mathcal{F}(\Theta):= \sum_{i=1}^2 \left(\omega_{\mathcal{L}_i}
\mathcal{F}_{\mathcal{L}_i}(\Theta)  + \omega_{\mathcal{B}_i}
\mathcal{F}_{\mathcal{B}_i}(\Theta) + \omega_{\mathcal{I}_i}
\mathcal{F}_{\mathcal{I}_i}(\Theta)\right)+ \omega_{\Gamma}
\mathcal{F}_{\Gamma}(\Theta),
$$
and our DNN/meshfree method for the interface problem~\eqref{eqn:interface-model} solves the following minimization problem,
\begin{equation} \label{eqn:DNN-interface-method}
    \min\limits_{\Theta\in\Psi^N}\mathcal{F}(
    \Theta)=\min\limits_{\Theta\in\Psi^N} \left[ \sum_{i=1}^2
    \left(\omega_{\mathcal{L}_i} \mathcal{F}_{\mathcal{L}_i}(\Theta)
    + \omega_{\mathcal{B}_i} \mathcal{F}_{\mathcal{B}_i}(\Theta) +
    \omega_{\mathcal{I}_i}
    \mathcal{F}_{\mathcal{I}_i}(\Theta)\right)+ \omega_{\Gamma}
    \mathcal{F}_{\Gamma}(\Theta) \right].
\end{equation}

To solve~\eqref{eqn:DNN-interface-method}, we use standard optimization algorithms such as the SGD method \cite{Robbins1951}. Actually, we use a variant of SGD, so-called the adaptive moment estimation (ADAM)~\cite{kingma2014adam}, in practice. The gradients used in the gradient descent method are computed via backward propagation as well as automatic differentiation. When the approximated minimizer~$\Theta^\ast$ is reached, we attain the desired DNN result, $\bigg(\mathcal{U}_{\mathbf{\bN\bN}}^{1}(\bX^{(1)};\Theta^\ast)$, $\mathcal{U}_{\mathbf{\bN\bN}}^{2}(\bX^{(2)};\Theta^\ast)\bigg)$, which is the numerical solution of the abstract interface problem~\eqref{eqn:interface-model}.

\subsection{Applications to two two-phase interface problems}\label{sec:DNN4interfaceproblems}
In the previous subsection, we discuss the DNN/meshfree method for solving the general interface problem~\eqref{eqn:interface-model}. In this subsection, we give concrete examples of applying the proposed DNN/meshfree method~\eqref{eqn:DNN-interface-method} to the two-phase flow interface problem and the FSI problems introduced in Section~\ref{sec:model:subsec:example}, respectively.

\subsubsection{Example 1: The two-phase flow interface problem}
To solve the two-phase flow interface problem using the proposed DNN/meshfree method, we propose the following total LS formulation with respect to $\tilde{\bm{u}}_i := (\tilde{\bm{v}}_i, \tilde{p}_i) \in H^1([0,T]; H^2(\Omega_i)) \times L^2([0,T]; H^1(\Omega_i))$, $i=1,2$,
\begin{align}
\mathcal{R}(\tilde{\bm{u}}_1, \tilde{\bm{u}}_2) & := \sum_{i=1}^2 \omega_{\mathcal{I}_i} \int_{\Omega_i} {\bm{\rho}_i} \left|\bm{v}_i^0 - \tilde{\bm{v}}_i \right|^2 \mathrm{d} \bx \notag\\
& \quad + \int_{0}^T \left[  \sum_{i=1}^2 \omega_{\mathcal{L}_i} \int_{\Omega_i}  \left|\bm{\rho}_i \left( \frac{\partial \tilde{\bm{v}}_i}{\partial t}  + \tilde{\bm{v}}_i \cdot \nabla \tilde{\bv}_i \right) - \nabla \cdot \tilde{\bS}_i - \bm{f}_i   \right|^2 \mathrm{d} \bx    \right. \notag\\
& \qquad \qquad + \sum_{i=1}^2 \omega_{\mathcal{L}_i} \int_{\Omega_i} \left| \nabla \cdot \tilde{\bv}_i \right|^2 \mathrm{d} \bx  + \sum_{i=1}^2 \omega_{\mathcal{B}_i} \int_{\partial \Omega_i \backslash \Gamma} \left| \bm{v}_i^b - \tilde{\bv}_i \right|^2 \mathrm{d} \bx
\label{LS4twophase}\\
& \qquad \qquad \left. + \omega_{\mathcal{L}_{\Gamma}} \int_{\Gamma}
\left| \tilde{\bv}_1 - \tilde{\bv}_2 - \bg_1  \right|^2 \mathrm{d} s
+ \omega_{\mathcal{L}_{\Gamma}}\int_{\Gamma} \left| \tilde{\bS}_1
\bn_1 + \tilde{\bS}_2 \bn_2 - \bg_2 \right|^2 \mathrm{d} s \right]
\mathrm{d}t.\notag
\end{align}
So the LS formulation~\eqref{eqn:LS} becomes: find $\bm{u}_i := (\bm{v}_i, p_i) \in H^1([0,T]; H^2(\Omega_i)) \times L^2([0,T]; H^1(\Omega_i))$ , $i=1,2$, such that
\begin{equation*}
\mathcal{R}(\bu_1, \bu_2) = \min_{\tilde{\bu}_i \in H^1([0,T]; H^2(\Omega_i)) \times L^2([0,T]; H^1(\Omega_i)), \ i=1,2}
\mathcal{R}(\tilde{\bm{u}}_1, \tilde{\bm{u}}_2).
\end{equation*}
Approximating $\tilde{\bv}_i$ and $\tilde{p}_i$ by their DNN functions $\mathcal{V}^i_{\mathbf{NN}}(\bX^{(i)}; \Theta)$ and~$\mathcal{P}_{\mathbf{NN}}^i(\bX^{(i)}; \Theta)$, $i=1,2$, as suggested in~\eqref{eqn:uNNs}, respectively, i.e., letting $\mathcal{U}^i_{\mathbf{NN}}(\bX^{(i)}; \Theta):=\left(\mathcal{V}^i_{\mathbf{NN}}(\bX^{(i)}; \Theta),\mathcal{P}_{\mathbf{NN}}^i(\bX^{(i)}; \Theta)\right)$, $i=1,2$, $n_1=d+1$ and $n_{L+1}=2(d+1)$ for the DNN structure~\eqref{eqn:NN-structure}, and replacing integrals in the LS formulation~\eqref{LS4twophase} by corresponding MSEs as suggested in~\eqref{eqn:MSEs}, we obtain the following DNN/meshfree method for the two-phase flow interface problem, where we drop $\bX$ in the formulation for the sake of simplicity,
\begin{equation*}
\min_{\Theta} \mathcal{F}(\Theta) = \min_{\Theta} \left[
\sum_{i=1}^2 \left(\omega_{\mathcal{I}_i}
\mathcal{F}_{\mathcal{I}_i}(\Theta) + \omega_{\mathcal{L}_i}
\mathcal{F}_{\mathcal{L}_i}(\Theta) + \omega_{\mathcal{B}_i}
\mathcal{F}_{\mathcal{B_i}}(\Theta)\right) + \omega_{\Gamma}
\mathcal{F}_{\Gamma}(\Theta) \right],
\end{equation*}
where
\begin{align*}
    \mathcal{F}_{\mathcal{I}_i}(\Theta) & \ :=\ \frac{1}{M_{\mathcal{I}_i}} \sum_{k=1}^{M_{\mathcal{I}_i}} \bm{\rho}_i \left| \bm{v}_i^0(\bX_k) - \mathcal{V}^i_{\mathbf{NN}}(\bX_k; \Theta) \right|^2, \\
    \mathcal{F}_{\mathcal{L}_i}(\Theta) & \ := \ \frac{1}{M_{\mathcal{L}_i}} \sum_{k=1}^{M_{\mathcal{L}_i}}  \left|\bm{\rho}_i \left( \frac{\partial \mathcal{V}^i_{\mathbf{NN}}(\bX_k; \Theta)}{\partial t}  + \mathcal{V}^i_{\mathbf{NN}}(\bX_k; \Theta) \cdot \nabla \mathcal{V}^i_{\mathbf{NN}}(\bX_k; \Theta) \right) \right. \\
    & \qquad \left. - \nabla \cdot \bS(\mathcal{V}^i_{\mathbf{NN}}(\bX_k; \Theta), \mathcal{P}_{\mathbf{NN}}^i(\bX_k; \Theta)) - \bm{f}_i(\bX_k)   \right|^2 +  \frac{1}{M_{\mathcal{L}_i}} \sum_{k=1}^{M_{\mathcal{L}_i}} \left| \nabla \cdot \mathcal{V}^i_{\mathbf{NN}}(\bX_k; \Theta) \right|^2, \\
    \mathcal{F}_{\mathcal{B}_i}(\Theta) & \ := \ \frac{1}{M_{\mathcal{B}_i}} \sum_{k=1}^{M_{\mathcal{B}_i}} \left| \bm{v}_i^b(\bX_k) - \mathcal{V}^i_{\mathbf{NN}}(\bX_k; \Theta) \right|^2, \\
    \mathcal{F}_{\Gamma}(\Theta) & \ := \ \frac{1}{M_{\Gamma}} \sum_{k=1}^{M_{\Gamma}} \left| \mathcal{V}^1_{\mathbf{NN}}(\bX_k; \Theta) - \mathcal{V}^2_{\mathbf{NN}}(\bX_k; \Theta) - \bg_1(\bX_k)  \right|^2, \\
    & \ \ \ +  \frac{1}{M_{\Gamma}} \sum_{k=1}^{M_{\Gamma}} \left| \bS(\mathcal{V}^1_{\mathbf{NN}}(\bX_k; \Theta), \mathcal{P}^1_{\mathbf{NN}}(\bX_k; \Theta)) \bn_1 + \bS(\mathcal{V}^2_{\mathbf{NN}}(\bX_k; \Theta), \mathcal{P}^2_{\mathbf{NN}}(\bX_k; \Theta)) \bn_2 - \bg_2(\bX_k) \right|^2.
\end{align*}

\subsubsection{Example 2: The FSI problems}
For the fluid-structure interaction problem with a wave-type structural equation, we propose the following total LS formulation, for $\tilde{\bm{u}}_1 := (\tilde{\bm{v}}_f, \tilde{p}_f) \in H^1([0,T]; H^2(\Omega_f)) \times L^2([0,T]; H^1(\Omega_f))$ and $\tilde{\bm{u}}_2 := \tilde{\bm{u}}_s \in H^2([0,T]; H^2(\Omega_s))$, define
\begin{align}
    \mathcal{R}(\tilde{\bm{u}}_1, \tilde{\bm{u}}_2) & := \omega_{\mathcal{I}_f} \int_{\Omega_f} \bm{\rho}_f \left|\bm{v}_f^0 - \tilde{\bm{v}}_f \right|^2 \mathrm{d} \bx + \omega_{\mathcal{I}_s} \int_{\Omega_s} \bm{\rho}_s \left|\frac{\partial \bm{u}_s^0}{\partial t} - \frac{\partial \tilde{\bm{u}}_s}{\partial t} \right|^2 \mathrm{d} \bx \notag\\
    & \quad + \omega_{\mathcal{I}_s} \int_{\Omega_s} \left( 2\mu_s \left| \varepsilon(\bu_s^0 -\tilde{\bu}_s)   \right|^2 + \lambda_s \left| \nabla \cdot (\bu^0_s - \tilde{\bu}_s) \right|^2 \right) \mathrm{d} \bx + \omega_{\mathcal{I}_s} \int_{\Omega_s} \left| \bu_s^0 - \tilde{\bu}_s \right|^2 \mathrm{d} \bx  \notag\\
    & \quad + \int_{0}^T \left[ \omega_{\mathcal{L}_f} \int_{\Omega_f}  \left|\bm{\rho}_f \left( \frac{\partial \tilde{\bm{v}}_f}{\partial t}  + \tilde{\bm{v}}_f \cdot \nabla \tilde{\bv}_f \right) - \nabla \cdot \tilde{\bS}_f - \bm{f}_f   \right|^2 \mathrm{d} \bx  +  \omega_{\mathcal{L}_f} \int_{\Omega_f} \left| \nabla \cdot \tilde{\bv}_f \right|^2 \mathrm{d} \bx   \right. \notag\\
    & \qquad \qquad + \omega_{\mathcal{L}_s}\int_{\Omega_s} \left| \bm{\rho}_s \left( \frac{\partial^2 \tilde{\bu}_s}{\partial t^2} \right) - \nabla \cdot \bS(\tilde{\bu}_s) - \bm{f}_s \right|^2 \mathrm{d}\bx \label{LS4FSI}\\
    & \qquad \qquad  + \omega_{\mathcal{B}_f} \int_{\partial \Omega_f \backslash \Gamma} \left| \bm{v}_f^b - \tilde{\bv}_f \right|^2 \mathrm{d} s + \omega_{\mathcal{B}_s} \int_{\partial \Omega_s \backslash \Gamma} \left| \frac{\partial \bu_s^b}{\partial t} - \frac{\partial \tilde{\bu}_s}{\partial t} \right|^2 \mathrm{d}s + \omega_{\mathcal{B}_s} \int_{\partial{\Omega_s} \backslash \Gamma} \left| \bu_s^b - \tilde{\bu}_s \right|^2 \mathrm{d}s  \notag\\
    & \qquad \qquad \left. + \omega_{\Gamma} \int_{\Gamma} \left| \tilde{\bv}_f - \frac{\partial\tilde{\bu}_s}{\partial t} - \bg_1  \right|^2 \mathrm{d} s + \omega_{\Gamma}\int_{\Gamma} \left| \tilde{\bS}_f \bn_f + \tilde{\bS}_s \bn_s - \bg_2 \right|^2 \mathrm{d} s \right]
    \mathrm{d}t.\notag
\end{align}
We want to point out that the third and the ninth integral term on the right-hand side of the above equation do not directly come from the wave-type structural equation introduced in Section~\ref{sec:FSI-example}. For the theoretical purpose, we add those two terms based on the error analysis in Section~\ref{sec:fsi-erroranalysis}. Without those two terms, our DNN/meshfree can still be applied in practice. The overall LS optimization problem is to find $\bu_1 \in H^1([0,T]; H^2(\Omega_f)) \times L^2([0,T]; H^1(\Omega_f))$ and $\bu_2 \in H^2([0,T]; H^2(\Omega_s))$ such that
\begin{equation*}
    \mathcal{R}(\bu_1, \bu_2) = \min_{\substack{\tilde{\bu}_1 \in H^1([0,T]; H^2(\Omega_f)) \times L^2([0,T]; H^1(\Omega_f))  \\ \tilde{\bu}_2 \in H^2([0,T]; H^2(\Omega_s))}} \mathcal{R}(\tilde{\bm{u}}_1, \tilde{\bm{u}}_2).
\end{equation*}
By approximating solutions of FSI, $\bv_f$, $p_f$ and $\bu_s$, using their DNN functions, $\mathcal{V}^f_{\mathbf{NN}}(\bX^{(f)}; \Theta)$, $\mathcal{P}^f_{\mathbf{NN}}(\bX^{(f)}; \Theta)$ and $\mathcal{U}^s_{\mathbf{NN}}(\bX^{(s)}; \Theta)$ as suggested in~\eqref{eqn:uNNs}, respectively, i.e., letting $\mathcal{U}^1_{\mathbf{NN}}(\bX^{(f)}; \Theta):=\left(\mathcal{V}^f_{\mathbf{NN}}(\bX^{(f)}; \Theta), \mathcal{P}^f_{\mathbf{NN}}(\bX^{(f)}; \Theta)\right)$ and $\mathcal{U}^2_{\mathbf{NN}}(\bX^{(s)}; \Theta) := \mathcal{U}^s_{\mathbf{NN}}(\bX^{(s)}; \Theta)$, $n_1=d+1$ and $n_{L+1}=2d+1$ for the DNN structure~\eqref{eqn:NN-structure}, and replacing all integrals in the LS formulation (\ref{LS4FSI}) by MSEs as suggested in~\eqref{eqn:MSEs}, we obtain the following DNN/meshfree method for the FSI problem,
\begin{equation*}
    \min_{\Theta} \mathcal{F}(\Theta) = \min_{\Theta} \left[   \omega_{\mathcal{I}_f} \mathcal{F}_{\mathcal{I}_f}(\Theta) +
    \omega_{\mathcal{I}_s} \mathcal{F}_{\mathcal{I}_s}(\Theta) + \omega_{\mathcal{L}_f} \mathcal{F}_{\mathcal{L}_f}(\Theta) +
    \omega_{\mathcal{L}_s} \mathcal{F}_{\mathcal{L}_s}(\Theta) + \omega_{\mathcal{B}_f} \mathcal{F}_{\mathcal{B_f}}(\Theta) +
    \omega_{\mathcal{B}_s} \mathcal{F}_{\mathcal{B_s}}(\Theta) + \omega_{\Gamma} \mathcal{F}_{\Gamma}(\Theta) \right],
\end{equation*}
where
\begin{align*}
    \mathcal{F}_{\mathcal{I}_f}(\Theta) & \ :=\ \frac{1}{M_{\mathcal{I}_f}} \sum_{k=1}^{M_{\mathcal{I}_f}} \bm{\rho}_f \left| \bm{v}_f^0(\bX_k) - \mathcal{V}^f_{\mathbf{NN}}(\bX_k; \Theta) \right|^2, \\
    \mathcal{F}_{\mathcal{I}_s}(\Theta) & \ :=\ \frac{1}{M_{\mathcal{I}_s}} \sum_{k=1}^{M_{\mathcal{I}_s}} \bm{\rho}_s \left| \frac{\partial \bm{u}_s^0(\bX_k)}{\partial t} - \frac{\partial \mathcal{U}^s_{\mathbf{NN}}(\bX_k; \Theta)}{\partial t} \right|^2 \\
    & \ \ \ +  \frac{1}{M_{\mathcal{I}_s}} \sum_{k=1}^{M_{\mathcal{I}_s}}  2\mu_s \left| \varepsilon(\bu_s^0(\bX_k) -\mathcal{U}^s_{\mathbf{NN}}(\bX_k; \Theta))   \right|^2 + \lambda_s \left| \nabla \cdot (\bu^0_s(\bX_k) - \mathcal{U}^s_{\mathbf{NN}}(\bX_k; \Theta)) \right|^2 \\
    & \ \ \  + \frac{1}{M_{\mathcal{I}_s}} \sum_{k=1}^{M_{\mathcal{I}_s}} \left| \bu_{s}^0(\bX_k) - \mathcal{U}_{\mathbf{NN}}^s(\bX_k; \Theta) \right|^2,  \\
    \mathcal{F}_{\mathcal{L}_f}(\Theta) & \ := \ \frac{1}{M_{\mathcal{L}_f}} \sum_{k=1}^{M_{\mathcal{L}_f}}  \left|\bm{\rho}_f \left( \frac{\partial \mathcal{V}^f_{\mathbf{NN}}(\bX_k; \Theta)}{\partial t}  + \mathcal{V}^f_{\mathbf{NN}}(\bX_k; \Theta) \cdot \nabla \mathcal{V}^f_{\mathbf{NN}}(\bX_k; \Theta) \right) \right. \\
    &\qquad \left. - \nabla \cdot \bS(\mathcal{V}^f_{\mathbf{NN}}(\bX_k; \Theta), \mathcal{P}_{\mathbf{NN}}^f(\bX_k; \Theta)) - \bm{f}_f(\bX_k)   \right|^2 +  \frac{1}{M_{\mathcal{L}_f}} \sum_{k=1}^{M_{\mathcal{L}_f}} \left| \nabla \cdot \mathcal{V}^f_{\mathbf{NN}}(\bX_k; \Theta) \right|^2, \\
    \mathcal{F}_{\mathcal{L}_s}(\Theta) & \ := \ \frac{1}{M_{\mathcal{L}_s}} \sum_{k=1}^{M_{\mathcal{L}_s}} \left| \bm{\rho}_s \left( \frac{\partial^2 \mathcal{U}^s_{\mathbf{NN}}(\bX_k; \Theta)}{\partial t^2} \right) - \nabla \cdot \bS(\mathcal{U}^s_{\mathbf{NN}}(\bX_k; \Theta)) - \bm{f}_s(\bX_k) \right|^2,  \\
    \mathcal{F}_{\mathcal{B}_f}(\Theta) & \ := \ \frac{1}{M_{\mathcal{B}_f}} \sum_{k=1}^{M_{\mathcal{B}_f}} \left| \bm{v}_f^b(\bX_k) - \mathcal{V}^f_{\mathbf{NN}}(\bX_k; \Theta) \right|^2, \\
    \mathcal{F}_{\mathcal{B}_s}(\Theta) & \ := \ \frac{1}{M_{\mathcal{B}_s}} \sum_{k=1}^{M_{\mathcal{B}_s}} \left| \frac{ \partial  \bm{u}_s^b(\bX_k)}{\partial t} - \frac{ \partial \mathcal{U}^s_{\mathbf{NN}}(\bX_k; \Theta)}{\partial t} \right|^2 + \frac{1}{M_{\mathcal{B}_s}} \sum_{k=1}^{M_{\mathcal{B}_s}} \left| \bu_s^b - \mathcal{U}_{\mathbf{NN}}^s(\bX_k; \Theta) \right|^2, \\
    \mathcal{F}_{\Gamma}(\Theta) & \ := \ \frac{1}{M_{\Gamma}} \sum_{k=1}^{M_{\Gamma}} \left| \mathcal{V}^f_{\mathbf{NN}}(\bX_k; \Theta) - \frac{\partial \mathcal{U}^s_{\mathbf{NN}}(\bX_k; \Theta)}{\partial t} - \bg_1(\bX_k)  \right|^2 \\
    & \ \ \ +  \frac{1}{M_{\Gamma}} \sum_{k=1}^{M_{\Gamma}} \left| \bS(\mathcal{V}^f_{\mathbf{NN}}(\bX_k; \Theta), \mathcal{P}^f_{\mathbf{NN}}(\bX_k; \Theta)) \bn_f + \bS(\mathcal{U}^s_{\mathbf{NN}}(\bX_k; \Theta)) \bn_s - \bg_2(\bX_k) \right|^2.
 \end{align*}

On the other hand, for the FSI model with a parabolic-like structural equation, we define the following LS formulation, for $\tilde{\bm{u}}_1 := (\tilde{\bm{v}}_f, \tilde{p}_f) \in H^1([0,T];H^2(\Omega_f)) \times L^2([0,T]; H^1(\Omega_f))$ and $\tilde{\bm{u}}_2 := (\tilde{\bm{u}}_s, \tilde{\bm{v}}_s) \in H^1([0,T]; H^2(\Omega_s)) \times H^1([0,T]; H^1(\Omega_s)) $,
\begin{align}
    \mathcal{R}(\tilde{\bm{u}}_1, \tilde{\bm{u}}_2) & := \omega_{\mathcal{I}_f} \int_{\Omega_f} \bm{\rho}_f \left|\bm{v}_f^0 - \tilde{\bm{v}}_f \right|^2 \mathrm{d} \bx + \omega_{\mathcal{I}_s} \int_{\Omega_s} \bm{\rho}_s \left| \bm{v}_s^0 - \tilde{\bm{v}}_s \right|^2 \mathrm{d} \bx \notag\\
    & \quad + \omega_{\mathcal{I}_s}\int_{\Omega_s} \left( 2\mu_s \left| \varepsilon(\bu_s^0 -\tilde{\bu}_s)   \right|^2 + \lambda_s \left| \nabla \cdot (\bu^0_s - \tilde{\bu}_s) \right|^2 \right)  \mathrm{d} \bx + \omega_{\mathcal{I}_s} \int_{\Omega_s} |\bu_s^0 - \tilde{\bu_s}|^2 \mathrm{d} \bx  \notag\\
    & \quad + \int_{0}^T \left[ \omega_{\mathcal{L}_f} \int_{\Omega_f}  \left|\bm{\rho}_f \left( \frac{\partial \tilde{\bm{v}}_f}{\partial t}  + \tilde{\bm{v}}_f \cdot \nabla \tilde{\bv}_f \right) - \nabla \cdot \tilde{\bS}_f - \bm{f}_f   \right|^2 \mathrm{d} \bx  +  \omega_{\mathcal{L}_f} \int_{\Omega_f} \left| \nabla \cdot \tilde{\bv}_f \right|^2 \mathrm{d} \bx   \right. \nonumber \\
    & \qquad \qquad + \omega_{\mathcal{L}_s}\int_{\Omega_s} \left| \bm{\rho}_s \frac{\partial \tilde{\bv}_s}{\partial t} - \nabla \cdot \bS(\tilde{\bu}_s) - \bm{f}_s \right|^2 \mathrm{d}\bx \label{LS4FSI1} \\
    & \qquad \qquad + \omega_{\mathcal{L}_s} \int_{\Omega_s} \left( 2\mu_s \left| \varepsilon\left(\frac{\partial \tilde{\bu}_s}{\partial t} -\tilde{\bv}_s\right)   \right|^2 + \lambda_s \left| \nabla \cdot \left(\frac{\partial \tilde{\bu}_s}{\partial t} - \tilde{\bv}_s\right) \right|^2 \right) \mathrm{d} \bx   \notag\\
    & \qquad \qquad + \omega_{\mathcal{L}_s} \int_{\Omega_s} \left|\frac{\partial \tilde{\bu}_s}{\partial t} - \tilde{\bv}_s \right|^2 \mathrm{d} \bx + \omega_{\mathcal{B}_f} \int_{\partial \Omega_f \backslash \Gamma} \left| \bm{v}_f^b - \tilde{\bv}_f \right|^2 \mathrm{d}s + \omega_{\mathcal{B}_s} \int_{\partial \Omega_s \backslash \Gamma} \left|  \bv_s^b - \tilde{\bv}_s \right|^2 \mathrm{d}s  \notag\\
    & \qquad \qquad \left. + \omega_{\Gamma} \int_{\Gamma} \left| \tilde{\bv}_f - \frac{\partial\tilde{\bu}_s}{\partial t} - \bg_1  \right|^2 \mathrm{d} s + \omega_{\Gamma}\int_{\Gamma} \left| \tilde{\bS}_f \bn_f + \tilde{\bS}_s \bn_s - \bg_2 \right|^2 \mathrm{d} s \right]
    \mathrm{d}t.\notag
\end{align}
Again, the third and the eighth integral term on the right hand side of~\eqref{LS4FSI1} are added based on the error analysis given in Section~\ref{sec:fsi-erroranalysis} for the theoretical purpose, they do not directly come from the parabolic-like structural equation introduced in Section~\ref{sec:FSI-example}. Without those two terms, our DNN/meshfree can still be used in practice. The overall LS optimization problem in this case becomes to find $\bu_1\in H^1([0,T];H^2(\Omega_f)) \times L^2([0,T];H^1(\Omega_f))$ and $\bu_2 \in H^1([0,T];H^2(\Omega_s)) \times H^1([0,T];H^1(\Omega_s))$ such that
\begin{equation*}
    \mathcal{R}(\bu_1, \bu_2) = \min_{ \substack{ \tilde{\bu}_1 \in  H^1([0,T];H^2(\Omega_f)) \times L^2([0,T];H^1(\Omega_f)) \\ \tilde{\bu}_2 \in H^1([0,T]; H^2(\Omega_s)) \times H^1([0,T];H^1(\Omega_s)) } } \mathcal{R}(\tilde{\bm{u}}_1, \tilde{\bm{u}}_2).
\end{equation*}
By approximating solutions of FSI, $\bv_f$, $p_f$, $\bv_s$ and~$\bu_s$, using their DNN functions, $\mathcal{V}^f_{\mathbf{NN}}(\bX^{(f)}; \Theta)$, $\mathcal{P}^f_{\mathbf{NN}}(\bX^{(f)}; \Theta)$, $\mathcal{V}^s_{\mathbf{NN}}(\bX^{(s)}; \Theta)$, and~$\mathcal{U}^s_{\mathbf{NN}}(\bX^{(s)}; \Theta)$ as suggested in~\eqref{eqn:uNNs}, respectively, i.e., letting $\mathcal{U}^1_{\mathbf{NN}}(\bX^{(f)}; \Theta):=$ $\left(\mathcal{V}^f_{\mathbf{NN}}(\bX^{(f)}; \Theta)\right.$, $\left.\mathcal{P}^f_{\mathbf{NN}}(\bX^{(f)}; \Theta)\right)$ and~$\mathcal{U}^2_{\mathbf{NN}}(\bX^{(s)}; \Theta):=\left(\mathcal{U}^s_{\mathbf{NN}}(\bX^{(s)}; \Theta), \mathcal{V}^s_{\mathbf{NN}}(\bX^{(s)}; \Theta)\right)$, $n_1=d+1$ and $n_{L+1}=3d+1$ for the DNN structure~\eqref{eqn:NN-structure}, and replacing all integrals in the LS formulation (\ref{LS4FSI1}) by MSEs as suggested in~\eqref{eqn:MSEs}, we obtain the following DNN/meshfree method,
\begin{equation*}
    \min_{\Theta} \mathcal{F}(\Theta) = \min_{\Theta} \left[   \omega_{\mathcal{I}_f} \mathcal{F}_{\mathcal{I}_f}(\Theta) +
    \omega_{\mathcal{I}_s} \mathcal{F}_{\mathcal{I}_s}(\Theta) + \omega_{\mathcal{L}_f} \mathcal{F}_{\mathcal{L}_f}(\Theta) +
    \omega_{\mathcal{L}_s} \mathcal{F}_{\mathcal{L}_s}(\Theta) + \omega_{\mathcal{B}_f} \mathcal{F}_{\mathcal{B_f}}(\Theta) +
    \omega_{\mathcal{B}_s} \mathcal{F}_{\mathcal{B_s}}(\Theta) + \omega_{\Gamma} \mathcal{F}_{\Gamma}(\Theta) \right],
\end{equation*}
where
\begin{align*}
    \mathcal{F}_{\mathcal{I}_f}(\Theta) & \ :=\ \frac{1}{M_{\mathcal{I}_f}} \sum_{k=1}^{M_{\mathcal{I}_f}} \bm{\rho}_f \left| \bm{v}_f^0(\bX_k) - \mathcal{V}^f_{\mathbf{NN}}(\bX_k; \Theta) \right|^2, \\
    \mathcal{F}_{\mathcal{I}_s}(\Theta) & \ :=\ \frac{1}{M_{\mathcal{I}_s}} \sum_{k=1}^{M_{\mathcal{I}_s}} \bm{\rho}_s \left|  \bm{v}_s^0(\bX_k) -  \mathcal{V}^s_{\mathbf{NN}}(\bX_k; \Theta) \right|^2 \\
    & \ \ \ +  \frac{1}{M_{\mathcal{I}_s}} \sum_{k=1}^{M_{\mathcal{I}_s}}  2\mu_s \left| \varepsilon(\bu_s^0(\bX_k) -\mathcal{U}^s_{\mathbf{NN}}(\bX_k; \Theta))   \right|^2 + \lambda_s \left| \nabla \cdot (\bu^0_s(\bX_k) - \mathcal{U}^s_{\mathbf{NN}}(\bX_k; \Theta)) \right|^2 \\
    & \ \ \  + \frac{1}{M_{\mathcal{I}_s}} \sum_{k=1}^{M_{\mathcal{I}_s}} \left| \bu_s^0(\bX_k) - \mathcal{U}_{\mathbf{NN}}^s(\bX_k; \Theta) \right|^2,  \\
    \mathcal{F}_{\mathcal{L}_f}(\Theta) & \ := \ \frac{1}{M_{\mathcal{L}_f}} \sum_{k=1}^{M_{\mathcal{L}_f}}  \left|\bm{\rho}_f \left( \frac{\partial \mathcal{V}^f_{\mathbf{NN}}(\bX_k; \Theta)}{\partial t}  + \mathcal{V}^f_{\mathbf{NN}}(\bX_k; \Theta) \cdot \nabla \mathcal{V}^f_{\mathbf{NN}}(\bX_k; \Theta) \right) \right. \\
    & \qquad  \left. - \nabla \cdot \bS(\mathcal{V}^f_{\mathbf{NN}}(\bX_k; \Theta), \mathcal{P}_{\mathbf{NN}}^f(\bX_k; \Theta)) - \bm{f}_f(\bX_k)   \right|^2 +  \frac{1}{M_{\mathcal{L}_f}} \sum_{k=1}^{M_{\mathcal{L}_f}} \left| \nabla \cdot \mathcal{V}^f_{\mathbf{NN}}(\bX_k; \Theta) \right|^2, \\
    \mathcal{F}_{\mathcal{L}_s}(\Theta) & \ := \ \frac{1}{M_{\mathcal{L}_s}} \sum_{k=1}^{M_{\mathcal{L}_s}} \left| \bm{\rho}_s \left( \frac{\partial \mathcal{V}^s_{\mathbf{NN}}(\bX_k; \Theta)}{\partial t} \right) - \nabla \cdot \bS(\mathcal{U}^s_{\mathbf{NN}}(\bX_k; \Theta)) - \bm{f}_s(\bX_k) \right|^2  \\
    & \ \ \ + \frac{1}{M_{\mathcal{L}_s}} \sum_{k=1}^{M_{\mathcal{L}_s}}  2\mu_s \left| \varepsilon\left(\frac{\partial \mathcal{U}^s_{\mathbf{NN}}(\bX_k; \Theta)}{\partial t} -\mathcal{V}^s_{\mathbf{NN}}(\bX_k; \Theta)\right)   \right|^2 + \lambda_s \left| \nabla \cdot \left(\frac{\partial \mathcal{U}^s_{\mathbf{NN}}(\bX_k; \Theta)}{\partial t} -\mathcal{V}^s_{\mathbf{NN}}(\bX_k; \Theta)\right) \right|^2 \\
  & \ \ \ + \frac{1}{\mathcal{M}_{\mathcal{L}_s}} \sum_{k=1}^{M_{\mathcal{L}_s}}  \left|  \frac{\partial \mathcal{U}^s_{\mathbf{NN}}(\bX_k; \Theta)}{\partial t} -\mathcal{V}^s_{\mathbf{NN}}(\bX_k; \Theta) \right|^2, \\
    \mathcal{F}_{\mathcal{B}_f}(\Theta) & \ := \ \frac{1}{M_{\mathcal{B}_f}} \sum_{k=1}^{M_{\mathcal{B}_f}} \left| \bm{v}_f^b(\bX_k) - \mathcal{V}^f_{\mathbf{NN}}(\bX_k; \Theta) \right|^2, \\
    \mathcal{F}_{\mathcal{B}_s}(\Theta) & \ := \ \frac{1}{M_{\mathcal{B}_s}} \sum_{k=1}^{M_{\mathcal{B}_s}} \left|  \bm{v}_s^b(\bX_k) -   \mathcal{V}^s_{\mathbf{NN}}(\bX_k; \Theta)
    \right|^2,    \\
    \mathcal{F}_{\Gamma}(\Theta) & \ := \ \frac{1}{M_{\Gamma}} \sum_{k=1}^{M_{\Gamma}} \left| \mathcal{V}^f_{\mathbf{NN}}(\bX_k; \Theta) - \frac{\partial \mathcal{U}^s_{\mathbf{NN}}(\bX_k; \Theta)}{\partial t} - \bg_1(\bX_k)  \right|^2 \\
    & \ \ \ +  \frac{1}{M_{\Gamma}} \sum_{k=1}^{M_{\Gamma}} \left| \bS(\mathcal{V}^f_{\mathbf{NN}}(\bX_k; \Theta), \mathcal{P}^f_{\mathbf{NN}}(\bX_k; \Theta)) \bn_f + \bS(\mathcal{U}^s_{\mathbf{NN}}(\bX_k; \Theta)) \bn_s - \bg_2(\bX_k) \right|^2.
\end{align*}

\section{Error analysis}\label{sec:error}
In this section, based on the recent work~\cite{mishra2022estimates}, we analyze  convergence properties of our proposed DNN/meshfree method. We first present an abstract framework of the error analysis for the abstract interface problem~\eqref{eqn:interface-model} solved by our DNN/meshfree method. Then we derive the specific error estimation of the developed DNN/meshfree method for each interface model problem presented in this paper, i.e., the two-phase flow interface problem and the FSI problem with two different types of structural equations. Here we use $\mathbb{X}(\mathfrak{D})$ to denote a general normed space defined in the space-time domain $\mathfrak{D}$ equipped with norms $\| \cdot \|_{\mathbb{X}( \mathfrak{D})}$.

\subsection{Abstract framework} \label{sec:abstract-error-analysis}
Following~\cite{mishra2022estimates}, our abstract framework of the error analysis for the proposed DNN/meshfree method is based on the following two major assumptions on the abstract interface model problem~\eqref{eqn:interface-model}.

\begin{assumption}\label{assumption:pde}
For any $\bm{u}_i, \bm{v}_i \in \mathbb{X}(\overline{\Omega_i} \times [0,T])$, $i=1,2$, the differential operators in the abstract interface model problem~\eqref{eqn:interface-model} satisfies, for $0< \beta_{\mathcal{L}_i}, \ \beta_{\mathcal{B}_i}, \ \beta_{\mathcal{I}_i}, \ \beta_{\Gamma} \leq 1$,
    \begin{align} %
        & \quad \sum_{i=1}^2 \| \bm{u}_i - \bm{v}_i \|_{\mathbb{X}(\overline{\Omega_i} \times [0,T])}^2 \nonumber \\
        &\leq C_{pde} \left\{  \sum_{i=1}^2 \left[
        \left(\| \mathcal{L}_i(\bm{u_i}) - \mathcal{L}_i(\bm{v_i}) \|^2_{L^2({\Omega_i} \times [0,T])} \right)^{\beta_{\mathcal{L}_i}}
        + \left( \| \mathcal{B}_i(\bu_i) - \mathcal{B}_i(\bv_i) \|^2_{L^2(\partial{\Omega_i} \backslash \Gamma \times
        [0,T])} \right)^{\beta_{\mathcal{B}_i}} \right.\right.\notag\\
        &\left.\left.\qquad\qquad+ \left(\| \mathcal{I}_i(\bm{u}_i) - \mathcal{I}_i(\bm{v}_i) \|^2_{L^2({\Omega_i})} \right)^{\beta_{\mathcal{I}_i}} \right]
        +  \left(\| \mathcal{L}_{\Gamma}(\bm{u}_1, \bm{u}_2) - \mathcal{L}_{\Gamma}(\bm{v}_1, \bm{v}_2) \|^2_{L^2(\Gamma \times [0,T])} \right)^{\beta_{\Gamma}} \right\}, \label{ine:pde-assumption}
    \end{align}
where the constant $C_{pde}$ depends on $\| \bm{u}_i\|_{\mathbb{X}(\overline{\Omega_i}\times [0,T])}$ as well as the regularity property of the underlying interface model problem~\eqref{eqn:interface-model}.
\end{assumption}

\begin{assumption} \label{assumption:quad}
    Consider an integral $I(g) := \int_{\mathfrak{D}} g(\bm{x}) \ \mathrm{d}\bm{x}$ and a quadrature rule using $N$ quadrature points $\bm{x}_i \in \mathfrak{D}\ (1 \leq i \leq N)$ as follows,
    \begin{equation*}
        I_N(g) : = \sum_{i=1}^N w_i g(\bm{x}_i), \qquad w_i \in \mathbb{R}^+, \ 1 \leq i \leq N.
    \end{equation*}
    We assume the quadrature error is
    \begin{equation} \label{eqn:quad}
        \vert  I(g) - I_N(g) \vert \leq C_{quad} \, N^{-\alpha}, \quad \alpha > 0,
    \end{equation}
where the constant $C_{quad}$ depends on the dimension of the domain and the property of the integrand $g(\bx)$.
\end{assumption}

Based on the above two assumptions, we have the following framework of the error estimation for the proposed DNN/meshfree method~\eqref{eqn:DNN-interface-method}.
\begin{thm} \label{thm:general-error-analysis}
Let $\bm{u}_i \in \mathbb{X}(\overline{\Omega_i} \times [0,T])$ be the unique solution of the general interface problem~\eqref{eqn:interface-model} and~$\mathcal{U}_{\mathbf{NN}}^{i,\ast}$ be the numerical approximation to $\bu_i\ (i=1,2) $ that is obtained by the DNN/meshfree method~\eqref{eqn:DNN-interface-method}. Under Assumptions~\ref{assumption:pde} and~\ref{assumption:quad}, we have the following estimation on the generalization error,
\begin{align} %
    & \quad  \sum_{i=1}^2 \| \bm{u}_i - \mathcal{U}_{\mathbf{\bN\bN}}^{i, \ast} \|_{\mathbb{X}(\overline{\Omega_i} \times [0,T])}^2 \nonumber \\
    &\leq C_{pde} \left\{  \sum_{i=1}^2 \left[ \left( \mathcal{F}_{\mathcal{L}_i}(\bm{\Theta}^*) \right)^{\beta_{\mathcal{L}_i}} + \left(\mathcal{F}_{\mathcal{B}_i}(\bm{\Theta^*}) \right)^{\beta_{\mathcal{B}_i}} + \left(\mathcal{F}_{\mathcal{I}_i}(\bm{\Theta^*})\right)^{\beta_{\mathcal{I}_i}}\right]+  \left(\mathcal{F}_{\Gamma}(\bm{\Theta}^*) \right)^{\beta_{\Gamma}} \right. \nonumber \\
    & \left.  \qquad \quad + \sum_{i=1}^2 \left(C_{quad}^{\mathcal{L}_i} M_{\mathcal{L}_i}^{-\alpha_{\mathcal{L}_i}\beta_{\mathcal{L}_i}} + C_{quad}^{\mathcal{B}_i} M_{\mathcal{B}_i}^{-\alpha_{\mathcal{B}_i}\beta_{\mathcal{B}_i}} + C_{quad}^{\mathcal{I}_i} M_{\mathcal{I}_i}^{-\alpha_{\mathcal{I}_i}\beta_{\mathcal{I}_i}}\right)+ C_{quad}^{\Gamma} M_{\Gamma}^{-\alpha_{\Gamma}\beta_{\Gamma}} \right\}. \label{ine:general-error}
\end{align}
\end{thm}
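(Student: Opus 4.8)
\emph{Proof strategy.} The plan is to compose the two structural assumptions, applying Assumption~\ref{assumption:pde} to the pair consisting of the exact solution $\bm{u}_i$ and the trained network $\mathcal{U}_{\mathbf{NN}}^{i,\ast}$. First I would set $\bm{v}_i = \mathcal{U}_{\mathbf{NN}}^{i,\ast}$ in~\eqref{ine:pde-assumption}; this is legitimate provided each $\mathcal{U}_{\mathbf{NN}}^{i,\ast}$ lies in $\mathbb{X}(\overline{\Omega_i}\times[0,T])$, which holds because a sufficiently smooth activation function makes the network functions inherit the regularity demanded by $\mathbb{X}$. Since $\bm{u}_i$ solves~\eqref{eqn:interface-model} exactly, every residual of the exact solution vanishes, i.e. $\mathcal{L}_i(\bm{u}_i)=\bm{0}$, $\mathcal{B}_i(\bm{u}_i)=\bm{0}$, $\mathcal{I}_i(\bm{u}_i)=\bm{0}$ and $\mathcal{L}_{\Gamma}(\bm{u}_1,\bm{u}_2)=\bm{0}$. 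Hence each operator difference in~\eqref{ine:pde-assumption} collapses to minus the corresponding residual of the network, and the left-hand side of~\eqref{ine:general-error} is bounded by $C_{pde}$ times a sum of the continuous residual $L^2$-norms of the trained network, each raised to its exponent $\beta_{\bullet}$.

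Second, I would convert each continuous residual integral into its discrete MSE counterpart. A term such as $\|\mathcal{L}_i(\mathcal{U}_{\mathbf{NN}}^{i,\ast})\|^2_{L^2(\Omega_i\times[0,T])}$ is exactly an integral $I(g)$ of the nonnegative integrand $g=|\mathcal{L}_i(\mathcal{U}_{\mathbf{NN}}^{i,\ast})|^2$, whose equal-weight Monte-Carlo quadrature $I_N(g)$ underlying~\eqref{eqn:MSEs} reproduces the training loss $\mathcal{F}_{\mathcal{L}_i}(\bm{\Theta}^*)$, up to a domain-measure normalization that can be absorbed into the relevant constant. The trivial one-sided bound $I(g)\le I_N(g)+|I(g)-I_N(g)|$ together with Assumption~\ref{assumption:quad} then gives
\[
\|\mathcal{L}_i(\mathcal{U}_{\mathbf{NN}}^{i,\ast})\|^2_{L^2(\Omega_i\times[0,T])} \le \mathcal{F}_{\mathcal{L}_i}(\bm{\Theta}^*) + C_{quad}^{\mathcal{L}_i} M_{\mathcal{L}_i}^{-\alpha_{\mathcal{L}_i}},
\]
with the analogous inequalities for the $\mathcal{B}_i$, $\mathcal{I}_i$ and $\Gamma$ residuals, each carrying its own quadrature rate $\alpha_{\bullet}$ and constant $C_{quad}^{\bullet}$.

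The last step is to carry these bounds through the exponents. Because $0<\beta_{\bullet}\le1$, the map $t\mapsto t^{\beta_{\bullet}}$ vanishes at the origin and is concave, hence subadditive, so $(a+b)^{\beta_{\bullet}}\le a^{\beta_{\bullet}}+b^{\beta_{\bullet}}$ for all $a,b\ge0$. Applying this to each bracket splits the training-loss part from the quadrature part, e.g. $(\mathcal{F}_{\mathcal{L}_i}(\bm{\Theta}^*)+C_{quad}^{\mathcal{L}_i}M_{\mathcal{L}_i}^{-\alpha_{\mathcal{L}_i}})^{\beta_{\mathcal{L}_i}} \le (\mathcal{F}_{\mathcal{L}_i}(\bm{\Theta}^*))^{\beta_{\mathcal{L}_i}} + (C_{quad}^{\mathcal{L}_i})^{\beta_{\mathcal{L}_i}} M_{\mathcal{L}_i}^{-\alpha_{\mathcal{L}_i}\beta_{\mathcal{L}_i}}$; after renaming $(C_{quad}^{\mathcal{L}_i})^{\beta_{\mathcal{L}_i}}$ as $C_{quad}^{\mathcal{L}_i}$ and summing over $i$ and over the four residual families, the result is precisely~\eqref{ine:general-error}.

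The argument has no deep obstacle; it is a clean composition of the two assumptions. The steps requiring care are the subadditivity inequality, since it is exactly what forces the restriction $0<\beta_{\bullet}\le1$ and what lets the quadrature contributions enter additively rather than becoming entangled with the training losses, and the bookkeeping that identifies each MSE with a quadrature rule up to a domain-measure constant that must be tracked into $C_{quad}^{\bullet}$ (and $C_{pde}$). I would also emphasize that the estimate is a posteriori: the right-hand side depends only on the realized training losses $\mathcal{F}_{\bullet}(\bm{\Theta}^*)$ at the computed parameters and on the sampling sizes $M_{\bullet}$, so no separate hypothesis on the SGD/ADAM optimizer is needed to close the bound.
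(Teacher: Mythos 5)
Your proposal is correct and follows essentially the same route as the paper's own proof: apply Assumption~\ref{assumption:pde} with $\bm{v}_i = \mathcal{U}_{\mathbf{\bN\bN}}^{i,\ast}$ so that the vanishing residuals of the exact solution leave only the network residuals, then use Assumption~\ref{assumption:quad} to replace each continuous residual norm by its discrete loss plus a quadrature term. Your explicit treatment of the subadditivity $(a+b)^{\beta}\le a^{\beta}+b^{\beta}$ and of the constant renaming merely fills in steps the paper leaves implicit in its one-line estimate for $\left(\|\mathcal{L}_i(\mathcal{U}_{\mathbf{\bN\bN}}^{i,\ast})\|^2_{L^2(\Omega_i\times[0,T])}\right)^{\beta_{\mathcal{L}_i}}$.
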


\begin{proof}
Consider the error $\bm{u}_i - \mathcal{U}_{\mathbf{\bN\bN}}^{i, \ast}$, $i = 1,2$, based on Assumption~\eqref{assumption:pde}, we have
\begin{align*} %
    \sum_{i=1}^2 \| \bm{u}_i - \mathcal{U}_{\mathbf{\bN\bN}}^{i, \ast} \|_{\mathbb{X}(\overline{\Omega_i} \times [0,T])}^2 &\leq C_{pde} \left\{  \sum_{i=1}^2 \left( \|  \mathcal{L}_i(\mathcal{U}_{\mathbf{\bN\bN}}^{i, \ast}) \|^2_{L^2(\Omega_i \times [0,T])} \right)^{\beta_{\mathcal{L}_i}}  +  \left( \| \mathcal{L}_{\Gamma}(\mathcal{U}_{\mathbf{\bN\bN}}^{1, \ast},\mathcal{U}_{\mathbf{\bN\bN}}^{2, \ast}) \|^2_{L^2(\Gamma \times [0,T])} \right)^{\beta_{\Gamma}} \right. \\
    & \left.  \qquad \quad + \sum_{i=1}^2 \left( \| \mathcal{B}_i(\mathcal{U}_{\mathbf{\bN\bN}}^{i, \ast}) \|^2_{L^2(\partial\Omega_i \backslash \Gamma \times [0,T])} \right)^{\beta_{\mathcal{B}_i}}  + \sum_{i=1}^2 \left(\|\mathcal{I}_i(\mathcal{U}_{\mathbf{\bN\bN}}^{i, \ast}) \|^2_{L^2(\Omega_i)} \right)^{\beta_{\mathcal{I}_i}} \right\}.
\end{align*}
To further estimate the terms on the right hand side, we need to use Assumption~\ref{assumption:quad}.  For example, for $0 < \beta_{\mathcal{L}_i} \leq 1$, we have,
\begin{align*}
\left(\|\mathcal{L}_i(\mathcal{U}_{\mathbf{\bN\bN}}^{i, \ast}) \|_{L^2(\Omega_i \times [0,T])}^2 \right)^{\beta_{\mathcal{L}_i}} = \left( \int_{\Omega_i \times [0,T]} |\mathcal{L}_i(\mathcal{U}_{\mathbf{\bN\bN}}^{i, \ast}(\bm{X}; \bm{\Theta}^*)|^2 \, \mathrm{d} \bm{X} \right)^{\beta_{\mathcal{L}_i}} \leq \left( \mathcal{F}_{\mathcal{L}_i}(\bm{\Theta}^*) \right)^{\beta_{\mathcal{L}_i}} + C_{quad}^{\mathcal{L}_i} M_{\mathcal{L}_i}^{-\alpha_{\mathcal{L}_i}\beta_{\mathcal{L}_i}}.
\end{align*}
The other terms can be estimated in the similar fashion and we can obtain that
\begin{align*} %
    & \quad \sum_{i=1}^2 \| \bm{u}_i - \mathcal{U}_{\mathbf{\bN\bN}}^{i, \ast} \|_{\mathbb{X}(\Omega_i \times [0,T])}^2  \\
    &\leq C_{pde} \left\{  \sum_{i=1}^2 \left( \mathcal{F}_{\mathcal{L}_i}(\bm{\Theta}^*) \right)^{\beta_{\mathcal{L}_i}}  +  \left(\mathcal{F}_{\Gamma}(\bm{\Theta}^*) \right)^{\beta_{\Gamma}} + \sum_{i=1}^2 \left( \mathcal{F}_{\mathcal{B}_i}(\bm{\Theta^*}) \right)^{\beta_{\mathcal{B}_i}} + \sum_{i=1}^2 \left( \mathcal{F}_{\mathcal{I}_i}(\bm{\Theta^*}) \right)^{\beta_{\mathcal{I}_i}} \right. \\
    & \left.  \qquad \quad + \sum_{i=1}^2 C_{quad}^{\mathcal{L}_i} M_{\mathcal{L}_i}^{-\alpha_{\mathcal{L}_i}\beta_{\mathcal{L}_i}} + C_{quad}^{\Gamma} M_{\Gamma}^{-\alpha_{\Gamma}\beta_{\Gamma}}  + \sum_{i=1}^2 C_{quad}^{\mathcal{B}_i} M_{\mathcal{B}_i}^{-\alpha_{\mathcal{B}_i}\beta_{\mathcal{B}_i}} + \sum_{i=1}^2 C_{quad}^{\mathcal{I}_i} M_{\mathcal{I}_i}^{-\alpha_{\mathcal{I}_i}\beta_{\mathcal{I}_i}} \right\}.
\end{align*}
This completes the proof.
\end{proof}

\begin{remark}
When the optimization problem is solved exactly or fairly accurately, the first four terms on the right-hand side of the error estimate~\eqref{ine:general-error} are negligible compared with the last four terms coming from the quadrature error. In this case, the error estimate~\eqref{ine:general-error} serves as a prior error estimation and provides convergence order with respect to the number of sampling points. On the other hand, when the optimization is not solved accurately enough, and its error dominates the quadrature errors, the error estimate~\eqref{ine:general-error} serves as a posteriori error estimation, and one can use the computed solution to estimate how large the error is. This theoretical observation can be used as a posteriori error estimator to design adaptive algorithms.
\end{remark}

\subsection{Error analysis of the DNN method for two interface problems}
In this subsection, we apply the general error estimation, Theorem~\eqref{thm:general-error-analysis}, to two interface model problems introduced in Section~\ref{sec:model:subsec:example} and provide concrete convergence analysis for their DNN/meshfree methods developed in Section \ref{sec:DNN4interfaceproblems}.

\subsubsection{Example 1: The two-phase flow interface problem}
Consider the errors $\bm{e}_i^{\bm{v}} := \bm{v}_i -
\mathcal{V}_{\mathbf{NN}}^{i,*} $ and $e_i^p := p_i -
\mathcal{P}_{\mathbf{NN}}^{i, *} $, $i=1,2$.  By a direct
calculation, we have
\begin{align}
    \bm{\rho_i} \left( \frac{\partial \bm{e}_i^{\bm{v}}}{\partial t} + (\bv_i \cdot \nabla) \bv_i  - (\mathcal{V}_{\mathbf{NN}}^{i,*} \cdot \nabla )\mathcal{V}_{\mathbf{NN}}^{i,*}\right) - \nabla \cdot 2 \mu_i \bD(\be_i^{\bv}) + \nabla e_i^p &= \mathcal{R}^{\bv}_i, \quad \text{in} \ \Omega_i\times(0,T], \notag \\
       \nabla \cdot \bm{e}_i^{\bv} &= \mathcal{R}^{\nabla \cdot}_i, \quad \text{in} \ \Omega_i\times(0,T], \nonumber \\
    \be_{1}^{\bv} - \bm{e}_2^{\bv} &= \mathcal{R}_{\Gamma}^{\bv}, \quad \text{on} \ \Gamma\times[0,T], \nonumber  \\
    \bm{\sigma}_1(\be_1^{\bv}, e_1^p) \bn_1 + \bm{\sigma}_2(\be_2^{\bv}, e_2^p) \bn_2 &= \mathcal{R}_{\Gamma}^{\sigma}, \quad \text{on} \ \Gamma\times[0,T],  \label{eqn:err-v}  \\
    e_i^{\bm{v}} &= \mathcal{R}^{\mathcal{B}}_i, \quad \text{on} \ \partial \Omega_i\backslash \Gamma\times[0,T], \nonumber  \\
    e_i^{\bm{v}}(\bm{x}_i, 0) & = \mathcal{R}^{\mathcal{I}}_i, \quad \text{in} \ \Omega_i, \nonumber
\end{align}
where
\begin{equation*}
\begin{pmatrix}
    \mathcal{R}^{\bv}_i \\
    \mathcal{R}^{\nabla \cdot}_i
\end{pmatrix} := -\mathcal{L}_{i}(\mathcal{U}_{\mathbf{NN}}^{i,\ast}), \quad
\begin{pmatrix}
    \mathcal{R}_{\Gamma}^{\bv} \\
    \mathcal{R}_{\Gamma}^{\bm{\sigma}}
\end{pmatrix}
:= -\mathcal{T}(\mathcal{U}_{\mathbf{NN}}^{1,\ast},\mathcal{U}_{\mathbf{NN}}^{2,\ast}), \quad \mathcal{R}^{\mathcal{B}}_i := -\mathcal{B}_i(\mathcal{U}_{\mathbf{NN}}^{i,\ast}), \quad \text{and} \ \mathcal{R}_i^{\mathcal{I}} : = -\mathcal{I}_i(\mathcal{U}_{\mathbf{NN}}^{i,\ast}), \quad i = 1,2.
\end{equation*}
Using $(\bv_i \cdot \nabla) \bv_i  - (\mathcal{V}_{\mathbf{NN}}^{i,*} \cdot \nabla )\mathcal{V}_{NN}^{i,*} = (\bv_i \cdot \nabla) \be_i^{\bv} + (\be_i^{\bv} \cdot \nabla) \mathcal{V}^{i,*}_{\mathbf{NN}}$, multiplying~\eqref{eqn:err-v}$_1$ by $\be_{i}^{\bv}$, and
integrating over $\Omega_i$, $i=1,2$, we have,
\begin{align*}
 \frac{\mathrm{d}}{\mathrm{d}t} \int_{\Omega_i} \bm{\rho}_i \frac{| \be_i^{\bv} |^2}{2} \mathrm{d} \bx + \int_{\Omega} \bm{\rho}_i \left( (\bv_i \cdot \nabla) \be_i^{\bv} \right) \cdot \be_i^{\bv}  \mathrm{d} \bx + \int_{\Omega_i} \bm{\rho}_i ( (\be_i^{\bv} \cdot \nabla)  \mathcal{V}_{\mathbf{NN}}^{i,*} ) \cdot \be_i^v \mathrm{d} \bx & \\
- \int_{\Omega_i}\left( \nabla \cdot 2\mu_i \bD(\be_i^{\bv})\right) \cdot \be_i^{\bv} \mathrm{d} \bx + \int_{\Omega_i} (\be_i^{\bv} \cdot \nabla) \be_i^p \mathrm{d} \bx &= \int_{\Omega_i} \mathcal{R}_{i}^{\bv} \cdot \be_i^{\bv} \mathrm{d} \bx.
\end{align*}
Applying integration by parts and summing up $i=1,2$, we arrive at
\begin{align}
& \quad \frac{\mathrm{d}}{\mathrm{d}t} \left[ \sum_{i=1}^2 \int_{\Omega_i} \bm{\rho}_i \left( \frac{| \be_i^{\bv} |^2}{2} \right) \mathrm{d} \bx \right] \notag \\
&= -\sum_{i=1}^2 \int_{\Omega_i} \bm{\rho}_i ((\bv_i \cdot \nabla) \be_i^{\bv} ) \cdot \be_i^{\bv}  \mathrm{d} \bx - \sum_{i=1}^2 \int_{\Omega_i} \bm{\rho}_i ( (\be_i^{\bv} \cdot \nabla) \mathcal{V}_{\mathbf{NN}}^{i,*}) \cdot \be_i^{\bv} \mathrm{d} \bx
 + \sum_{i=1}^2 \int_{\Omega_i} \mathcal{R}_i^{\bv} \cdot \be_i^{\bv} \mathrm{d} \bx \notag\\
 & \quad -\sum_{i=1}^2 2 \mu_i \int_{\Omega_i} | \bD(\be_i^{\bv}) |^2 \mathrm{d} \bx + \sum_{i=1}^2 \int_{\Omega_i} \mathcal{R}^{\nabla \cdot}_{i} e_i^p \mathrm{d}\bx + \sum_{i=1}^2   \int_{\partial \Omega_i \backslash \Gamma} \mathcal{R}_i^{\mathcal{B}} \cdot \bm{\sigma}_i(\be_i^{\bv}, e_i^p) \bn_i \mathrm{d} s \label{estimate4twophase}\\
& \quad + \int_{\Gamma} \mathcal{R}_{\Gamma}^{\bv}\cdot \left(
\frac{ \bm{\sigma}_1(\be_1^{\bv}, e_1^p) \bn_1 -
\bm{\sigma}_2(\be_2^{\bv}, e_2^p)\bn_2 }{2}  \right) \mathrm{d} s +
\int_{\Gamma} \mathcal{R}_{\Gamma}^{\bm{\sigma}}\cdot \left(
\frac{\be_1^{\bv} + \be_2^{\bv}}{2} \right) \mathrm{d}s,\notag
\end{align}
where we use the algebraic identity~$ac+bd=\frac{1}{2}[(a-b)(c-d)+(a+b)(c+d)]$.

To estimate the first term on the right hand side of~\eqref{estimate4twophase}, we use Cauchy-Schwarz inequality, Young's inequality, and Korn's inequality to get
\begin{align*}
    \left|\sum_{i=1}^2 \int_{\Omega_i} \bm{\rho}_i ((\bv_i \cdot \nabla) \be_i^{\bv} ) \cdot \be_i^{\bv}  \mathrm{d} \bx\right| &\leq \theta_i \sum_{i=1}^2\int_{\Omega_i} \bm{\rho}_i |\bv_i \cdot \nabla \be_i^{\bv}|^2 \mathrm{d} \bx  + \frac{1}{4\theta_i} \sum_{i=1}^2\int_{\Omega_i} \bm{\rho}_i |\be_i^{\bv}|^2 \mathrm{d} \bx  \\
    & \leq \theta_i C_d \bm{\rho}_i \| \bv_i \|^2_{\infty} \sum_{i=1}^2\int_{\Omega_i} |\nabla \be_i^{\bv}|^2 \mathrm{d} \bx +   \frac{1}{4\theta_i} \sum_{i=1}^2\int_{\Omega_i} \bm{\rho}_i |\be_i^{\bv}|^2 \mathrm{d} \bx \\
    & \leq  \theta_i 2 C_d \bm{\rho}_i \| \bv_i \|^2_{\infty} \sum_{i=1}^2\int_{\Omega_i} |\bD(\be_i^{\bv})|^2 \mathrm{d} \bx +   \frac{1}{4\theta_i} \sum_{i=1}^2\int_{\Omega_i} \bm{\rho}_i |\be_i^{\bv}|^2 \mathrm{d} \bx \\
    & \leq 2\mu_i \sum_{i=1}^2\int_{\Omega_i} |\bD(\be_i^{\bv})|^2 \mathrm{d} \bx + \frac{C_d \rho_i \| \bv_i \|_{\infty}^2}{4 \mu_i}  \sum_{i=1}^2\int_{\Omega_i} \bm{\rho}_i |\be_i^{\bv}|^2 \mathrm{d} \bx,
\end{align*}
where $C_d$ is a positive constant, and we choose~$\theta_i = \frac{\mu_i}{C_d \rho_i \| \bv_i \|^2_{\infty}}$ in the last step. By bounding $\nabla \mathcal{V}_{\mathbf{NN}^{i,*}}$ from above, we can estimate the second term on the right hand side of (\ref{estimate4twophase}) as follows,
\begin{align*}
    \left|\sum_{i=1}^2\int_{\Omega_i} \bm{\rho}_i ( (\be_i^{\bv} \cdot \nabla) \mathcal{V}_{\mathbf{NN}}^{i,*}) \cdot \be_i^{\bv} \mathrm{d} \bx\right| \leq   \sum_{i=1}^2 \| \nabla \mathcal{V}_{\mathbf{NN}}^{i,*} \|_{\infty} \int_{\Omega_i} \bm{\rho}_i |\be_i^{\bv}|^2 \mathrm{d} \bx \leq \left(\max_{i=1,2} \| \nabla \mathcal{V}_{\mathbf{NN}}^{i,*} \|_{\infty}  \right) \int_{\Omega_i} \bm{\rho}_i |\be_i^{\bv}|^2 \mathrm{d} \bx.
\end{align*}
The third term and the last four terms on the right hand side of~\eqref{estimate4twophase} are handled by Cauchy-Schwarz inequality in the inner product space $L^2$ first, and then as a whole, in the Euclidean space $\mathbb{R}^8$, resulting in the following estimation,
\begin{align*}
& \quad \frac{\mathrm{d}}{\mathrm{d}t} \left[ \sum_{i=1}^2 \int_{\Omega_i} \bm{\rho}_i \left( | \be_i^{\bv} |^2 \right) \mathrm{d} \bx \right] \\
& \leq C_1 \left[ \sum_{i=1}^2 \int_{\Omega_i}
|\mathcal{R}_{i}^{\bv}|^2 \mathrm{d} \bx  +  \sum_{i=1}^2
\int_{\Omega_i} |\mathcal{R}_i^{\nabla \cdot}|^2 \mathrm{d} \bx +
\sum_{i=1}^2 \int_{\partial \Omega_i \backslash \Gamma}
|\mathcal{R}_i^{\mathcal{B}}|^2 \mathrm{d}s + \int_{\Gamma}
|\mathcal{R}_{\Gamma}^{\bv}|^2 \mathrm{d} s +   \int_{\Gamma}
|\mathcal{R}_{\Gamma}^{\bm{\sigma}}|^2 \mathrm{d} s
\right]^{\frac{1}{2}} \\
& \quad + C_2 \left( \sum_{i=1}^2 \int_{\Omega_i} \bm{\rho}_i |
\be_i^{\bv}|^2 \mathrm{d} \bx \right) ,
\end{align*}
where $C_1$ and $C_2$ are positive constants depending on $\| \bv_i \|_{C^2(\overline{\Omega_i} \times [0,T])}$, $\| p_i \|_{C^1(\overline{\Omega_i} \times [0,T])}$, $\| \mathcal{V}_{\mathbf{NN}}^{i,*} \|_{(C^2(\overline{\Omega_i} \times [0,T])}$, $\| \mathcal{P}_{\mathbf{NN}}^{i,*} \|_{C^1(\overline{\Omega_i} \times [0,T])}$, $\bm{\rho}_i$, $\mu_i$, $\Omega_i\ (i=1,2)$, and $d$. For any $0\leq\tau \leq T$, we integrate the above estimation over time to obtain
\begin{align*}
    & \sum_{i=1}^2 \int_{\Omega_i} \bm{\rho}_i | \be_i^{\bv}(\bx_i, \tau) |^2 \mathrm{d} \bx \leq \sum_{i=1}^2 \int_{\Omega_i} \bm{\rho}_i |
\mathcal{R}_{i}^{\mathcal{I}} |^2 \mathrm{d} \bx  \\
&+ C_1 T^{\frac{1}{2}} \left[ \int_{0}^T \left( \sum_{i=1}^2
\int_{\Omega_i}  |\mathcal{R}_{i}^{\bv}|^2 \mathrm{d} \bx  +
\sum_{i=1}^2 \int_{\Omega_i} |\mathcal{R}_i^{\nabla \cdot}|^2
\mathrm{d} \bx + \sum_{i=1}^2 \int_{\partial \Omega_i \backslash
\Gamma} |\mathcal{R}_i^{\mathcal{B}}|^2 \mathrm{d}s + \int_{\Gamma}
\left(|\mathcal{R}_{\Gamma}^{\bv}|^2 +
|\mathcal{R}_{\Gamma}^{\bm{\sigma}}|^2\right) \mathrm{d} s
    \right) \mathrm{d} t \right]^{\frac{1}{2}} \\
    & + C_2 \int_{0}^\tau \left( \sum_{i=1}^2 \int_{\Omega_i} \bm{\rho}_i | \be_i^{\bv}(\bx_i, t)|^2 \mathrm{d} \bx \right) \mathrm{d} t \\
    &   =: \mathcal{R} + C_2 \int_{0}^{\tau} \left( \sum_{i=1}^2 \int_{\Omega_i} \bm{\rho}_i | \be_i^{\bv}(\bx_i, t)|^2 \mathrm{d} \bx \right) \mathrm{d}
    t.
\end{align*}
Now apply the Gr\"{o}nwall's inequality, yields,
\begin{align*}
\sum_{i=1}^2 \int_{\Omega_i} \bm{\rho}_i | \be_i^{\bv}(\bx_i, \tau)
|^2 \mathrm{d} \bx  \leq (1 + C_2 T e^{C_2T}) \mathcal{R}.
\end{align*}
Integrate again over $[0,T]$, results in
\begin{align} \label{ine:two-phase-flow-pde-assumption}
    \int_{0}^T \sum_{i=1}^2 \int_{\Omega_i} \bm{\rho}_i | \be_i^{\bv}(\bx_i, t) |^2 \mathrm{d} \bx \mathrm{d} t \leq (T + C_2T^2 e^{C_2 T}) \mathcal{R}.
\end{align}
By the definition of $\mathcal{R}$, \eqref{ine:two-phase-flow-pde-assumption} essentially verifies Assumption~\ref{assumption:pde} for this two-phase flow interface problem with $\beta_{\mathcal{L}_i} = \beta_{\mathcal{B}_i} = \beta_{\Gamma} = \frac{1}{2}$ and $\beta_{\mathcal{I}_i} = 1$, $i=1,2$. Following the abstract analysis presented in Section~\ref{sec:abstract-error-analysis} and using Assumption~\ref{assumption:quad} to bound $\mathcal{R}$ via quadrature errors, we have the following estimation on the generalization error for the presented two-phase flow interface problem.
\begin{thm} \label{thm:error-two-phase-fluid}
Let $\bm{v}_i \in C^2(\overline{\Omega_i} \times [0,T])$ and $p_i \in C^1(\overline{\Omega_i} \times [0,T])$ be the classical solution of the two-phase flow interface problem~\eqref{eqn:two-phase-flow-interface}, correspondingly, let~$\mathcal{V}_{\mathbf{NN}}^{i,*}\in C^2(\overline{\Omega_i} \times [0,T])$ and~$\mathcal{P}_{\mathbf{NN}}^{i,*}\in C^1(\overline{\Omega_i} \times [0,T])$ be the numerical approximations obtained by the DNN/meshfree method. We have the following generalization error estimation,
\begin{align*}
&\int_{0}^T \sum_{i=1}^2 \int_{\Omega_i}  | \be_i^{\bv}(\bx_i, t) |^2 \mathrm{d} \bx \mathrm{d} t \leq
 C_1T(1 + C_2T e^{C_2 T}) \left[ \sum_{i=1}^2  \mathcal{F}_{\mathcal{I}_i}(\bm{\Theta}^*) + \sum_{i=1}^2 
C_{quad}^{\mathcal{I}_i} M_{\mathcal{I}_i}^{-\alpha_{\mathcal{I}_i}} \right] \\
& + C_1T^{\frac{3}{2}}(1 + C_2T e^{C_2 T}) \left[
\sum_{i=1}^2 \mathcal{F}_{\mathcal{L}_i}(\bm{\Theta}^*) +
\sum_{i=1}^2 \mathcal{F}_{\mathcal{B}_i}(\bm{\Theta}^*)  +
\mathcal{F}_{\Gamma}(\bm{\Theta}^*)
 + \sum_{i=1}^2 C_{quad}^{\mathcal{L}_i} M_{\mathcal{L}_i}^{-\alpha_{\mathcal{L}_i}} + \sum_{i=1}^2 C_{quad}^{\mathcal{B}_i} M_{\mathcal{B}_i}^{-\alpha_{\mathcal{B}_i}}
+ C_{quad}^{\Gamma} M_{\Gamma}^{-\alpha_{\Gamma}}
\right]^{\frac{1}{2}},
\end{align*}
where $C_1$ and $C_2$ are positive constants depending on $\| \bv_i \|_{C^2(\overline{\Omega_i} \times [0,T])}$, $\| p_i \|_{C^1(\overline{\Omega_i} \times [0,T])}$, $\| \mathcal{V}_{\mathbf{NN}}^{i,*} \|_{C^2(\overline{\Omega_i} \times [0,T])}$, $\| \mathcal{P}_{\mathbf{NN}}^{i,*} \|_{C^1(\overline{\Omega_i} \times [0,T])}$, $\bm{\rho}_i$, $\mu_i$, $\Omega_i\ (i=1,2)$, and $d$.
\end{thm}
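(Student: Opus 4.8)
The plan is to recognize that essentially all of the analytic substance of this theorem has already been carried out before the statement: the energy identity \eqref{estimate4twophase}, the absorption of the convective term against the viscous dissipation via Cauchy--Schwarz, Young's, and Korn's inequalities, and the Gr\"onwall argument that produced \eqref{ine:two-phase-flow-pde-assumption}. Consequently, what remains is to convert the lumped residual quantity $\mathcal{R}$ appearing in \eqref{ine:two-phase-flow-pde-assumption} into the advertised sum of discrete loss functionals $\mathcal{F}_{(\cdot)}(\Theta^*)$ and quadrature errors $C_{quad}^{(\cdot)} M_{(\cdot)}^{-\alpha_{(\cdot)}}$. So the theorem follows by a structured bookkeeping step layered on top of the already-established stability estimate, and this is precisely the instantiation of the abstract Theorem~\ref{thm:general-error-analysis} with the $\beta$-exponents identified below.

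First I would recall \eqref{ine:two-phase-flow-pde-assumption}, namely $\int_0^T \sum_{i} \int_{\Omega_i}\rho_i|\be_i^{\bv}|^2\,\mathrm{d}\bx\,\mathrm{d}t \le (T + C_2 T^2 e^{C_2 T})\mathcal{R}$, together with the definition of $\mathcal{R}$, which splits into a stand-alone initial-condition part $\sum_i \int_{\Omega_i}\rho_i|\mathcal{R}_i^{\mathcal{I}}|^2\,\mathrm{d}\bx$ plus the term $C_1 T^{1/2}$ times the square root of the time-integrated interior, boundary, and interface residuals. Factoring $T + C_2 T^2 e^{C_2 T} = T(1 + C_2 T e^{C_2 T})$ and distributing over the two parts of $\mathcal{R}$ immediately yields the two prefactors $C_1 T(1 + C_2 T e^{C_2 T})$ for the initial part and $C_1 T^{3/2}(1 + C_2 T e^{C_2 T})$ for the square-root part that appear in the statement. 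This step is also where the exponents are read off: the initial residual enters to the first power, which is $\beta_{\mathcal{I}_i}=1$, while the remaining residuals sit under a square root, which is $\beta_{\mathcal{L}_i}=\beta_{\mathcal{B}_i}=\beta_\Gamma=\tfrac12$, exactly the values asserted when the preceding computation verifies Assumption~\ref{assumption:pde}.

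Next I would replace each continuous residual $L^2$-norm by its Monte Carlo counterpart plus a quadrature error via Assumption~\ref{assumption:quad}. The identifications to make explicit are that $\sum_i\int_{\Omega_i}\rho_i|\mathcal{R}_i^{\mathcal{I}}|^2 = \sum_i\|\mathcal{I}_i(\mathcal{U}_{\mathbf{NN}}^{i,*})\|_{0,\Omega_i}^2$, that the momentum and divergence residuals combine as $\int_0^T\int_{\Omega_i}(|\mathcal{R}_i^{\bv}|^2+|\mathcal{R}_i^{\nabla\cdot}|^2) = \|\mathcal{L}_i(\mathcal{U}_{\mathbf{NN}}^{i,*})\|_{0,\Omega_i\times[0,T]}^2$, and likewise that $\mathcal{R}_\Gamma^{\bv},\mathcal{R}_\Gamma^{\bm{\sigma}}$ together reconstitute $\mathcal{L}_\Gamma$ and $\mathcal{R}_i^{\mathcal{B}}$ reconstitutes $\mathcal{B}_i$. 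Since each of these $L^2$-integrals is exactly what the matching MSE in \eqref{eqn:MSEs} discretizes, applying \eqref{eqn:quad} gives bounds such as $\|\mathcal{L}_i(\mathcal{U}_{\mathbf{NN}}^{i,*})\|_{0,\Omega_i\times[0,T]}^2 \le \mathcal{F}_{\mathcal{L}_i}(\Theta^*) + C_{quad}^{\mathcal{L}_i} M_{\mathcal{L}_i}^{-\alpha_{\mathcal{L}_i}}$, and analogously for the boundary, interface, and initial terms. Substituting these into the two grouped parts of $\mathcal{R}$ and folding the positive weights $\rho_i,\mu_i$ and domain constants into $C_1,C_2$ produces the stated inequality.

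I do not expect a genuine analytic obstacle, since the Gr\"onwall-based stability estimate already supplies the nontrivial control; the only care required is bookkeeping. The hard part, such as it is, will be pairing each vector-valued residual with the MSE discretizing the same integral, in particular grouping the momentum and divergence residuals into the single functional $\mathcal{F}_{\mathcal{L}_i}$ and the two interface residuals into $\mathcal{F}_\Gamma$, and tracking the placement of the square root so that the $\beta_{\mathcal{I}_i}=1$ versus $\beta=\tfrac12$ dichotomy is honored in the final bounds. One minor but worth-stating point is that the left-hand side of the theorem is the unweighted $L^2$-norm of $\be_i^{\bv}$, whereas \eqref{ine:two-phase-flow-pde-assumption} carries the density weight, so I would invoke $\rho_i \ge \min_i \rho_i > 0$ to pass from the weighted energy to the unweighted norm, again absorbing the factor into $C_1$.
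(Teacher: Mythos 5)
Your proposal is correct and follows essentially the same route as the paper: bound the lumped residual $\mathcal{R}$ term by term via Assumption~\ref{assumption:quad}, identifying each continuous $L^2$ residual with the MSE in \eqref{eqn:MSEs} plus a quadrature error, and substitute back into \eqref{ine:two-phase-flow-pde-assumption}. Your extra observations (factoring $T + C_2T^2e^{C_2T} = T(1+C_2Te^{C_2T})$, reading off $\beta_{\mathcal{I}_i}=1$ versus $\beta=\tfrac12$, and using $\rho_i \geq \min_i\rho_i > 0$ to pass from the weighted to the unweighted norm) are details the paper leaves implicit but are handled identically by absorbing constants into $C_1$, $C_2$.
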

\begin{proof}
Using Assumption~\ref{assumption:quad}, we have
\begin{align*}
\mathcal{R} &\leq \sum_{i=1}^2 \bm{\rho}_i \mathcal{F}_{\mathcal{I}_i}(\bm{\Theta}^*) + \sum_{i=1}^2 \bm{\rho_i} C_{quad}^{\mathcal{I}_i} M_{\mathcal{I}_i}^{-\alpha_{\mathcal{I}_i}} \\
& \quad  + C_1 T^{\frac{1}{2}} \left[  \sum_{i=1}^2 \mathcal{F}_{\mathcal{L}_i}(\bm{\Theta}^*) + \sum_{i=1}^2 C_{quad}^{\mathcal{L}_i} M_{\mathcal{L}_i}^{-\alpha_{\mathcal{L}_i}}      \right.  \\
& \qquad \qquad \quad + \sum_{i=1}^2 \mathcal{F}_{\mathcal{B}_i}(\bm{\Theta}^*) + \sum_{i=1}^2 C_{quad}^{\mathcal{B}_i} M_{\mathcal{B}_i}^{-\alpha_{\mathcal{B}_i}} \\
& \qquad \qquad \quad \left. + \mathcal{F}_{\Gamma}(\bm{\Theta}^*)
 + C_{quad}^{\Gamma} M_{\Gamma}^{-\alpha_{\Gamma}}
 \right]^{\frac{1}{2}}.
\end{align*}
Substituting the above estimation back into~\eqref{ine:two-phase-flow-pde-assumption}, we complete the proof.
\end{proof}

\subsubsection{Example 2: The FSI problems}\label{sec:fsi-erroranalysis}
Consider the errors~$\be_f^{\bv} := \bv_f -\mathcal{V}_{\mathbf{NN}}^{f,*}$, $e_f^p := p_f -\mathcal{P}_{\mathbf{NN}}^{f,*}$, and~$\be_{s}^{\bu} := \bu_s -\mathcal{U}_{\mathbf{NN}}^{s,*}$. By a direct calculation on the FSI model with a wave-type structural equation, we have
\begin{align}
    \bm{\rho_f} \left( \frac{\partial \bm{e}_f^{\bm{v}}}{\partial t} + (\bv_f \cdot \nabla) \bv_f  - (\mathcal{V}_{\mathbf{NN}}^{f,*} \cdot \nabla )\mathcal{V}_{\mathbf{NN}}^{f,*}\right) - \nabla \cdot 2 \mu_i \bD(\be_f^{\bv}) + \nabla e_f^p &= \mathcal{R}^{\bv}_f, \quad \text{in} \ \Omega_f\times(0,T], \nonumber \\
    \nabla \cdot \bm{e}_f^{\bv} &= \mathcal{R}^{\nabla \cdot}_f, \quad \text{in} \ \Omega_f\times(0,T], \nonumber \\
    \bm{\rho_s} \left( \frac{\partial^2 \be_{s}^{\bu}}{\partial t^2}  \right) - \nabla \cdot \bm{\sigma}_s(\be_{s}^{\bu}) & = \mathcal{R}_s^{\bu}, \quad \text{in} \ \Omega_s\times(0,T], \notag\\
    \be_{f}^{\bv} - \frac{\partial \bm{e}_s^{\bu}}{\partial t} &= \mathcal{R}_{\Gamma}^{\bv}, \quad \text{on} \ \Gamma\times[0,T], \nonumber  \\
    \bm{\sigma}_f(\be_f^{\bv}, e_f^p) \bn_f + \bm{\sigma}_s(\be_s^{\bu}) \bn_s &= \mathcal{R}_{\Gamma}^{\sigma}, \quad \text{on} \ \Gamma\times[0,T], \label{eqn:err-u}  \\
    e_f^{\bm{v}} &= \mathcal{R}^{\mathcal{B}}_f, \quad \text{on} \ \partial \Omega_f\backslash \Gamma\times[0,T],\nonumber  \\
    e_s^{\bm{u}} &= \mathcal{R}^{\mathcal{B}}_s, \quad \text{on} \ \partial \Omega_s\backslash \Gamma\times[0,T], \notag  \\
    e_f^{\bm{v}}(\bm{x}_f, 0) & = \mathcal{R}^{\mathcal{I}}_f, \quad \text{in} \ \Omega_f, \nonumber \\
    e_s^{\bm{u}}(\bm{x}_s, 0) & = \mathcal{R}^{\mathcal{I}, \bu}_s, \quad \text{in} \ \Omega_s, \nonumber  \\
    \frac{\partial e_s^{\bm{u}}}{\partial t}(\bm{x}_s, 0) & = \mathcal{R}^{\mathcal{I}, \bv}_s, \quad \text{in} \ \Omega_s, \nonumber
\end{align}
where $\mathcal{U}^{1,*}_{\mathbf{NN}}=(\mathcal{V}^{f,*}_{\mathbf{NN}}, \mathcal{P}^{f,*}_{\mathbf{NN}})$, $\mathcal{U}^{2,*}_{\mathbf{NN}} = \mathcal{U}^{s,*}_{\mathbf{NN}}$,
\begin{align*}
    \begin{pmatrix}
        \mathcal{R}^{\bv}_f \\
        \mathcal{R}^{\nabla \cdot}_f
    \end{pmatrix} &:= -\mathcal{L}_{f}(\mathcal{U}_{\mathbf{NN}}^{1,\ast}), \
    \mathcal{R}_s^{\bu} := - \mathcal{L}_s(\mathcal{U}_{\mathbf{NN}}^{2,*}), \
    \begin{pmatrix}
        \mathcal{R}_{\Gamma}^{\bv} \\
        \mathcal{R}_{\Gamma}^{\bm{\sigma}}
    \end{pmatrix}
    := -\mathcal{T}(\mathcal{U}_{\mathbf{NN}}^{1,\ast},\mathcal{U}_{\mathbf{NN}}^{2,\ast}), \\
    \mathcal{R}^{\mathcal{B}}_f &:= -\mathcal{B}_f(\mathcal{U}_{\mathbf{NN}}^{1,\ast}), \
     \mathcal{R}^{\mathcal{B}}_s := -\mathcal{B}_s(\mathcal{U}_{\mathbf{NN}}^{2,\ast}), \
    \mathcal{R}_f^{\mathcal{I}} : = -\mathcal{I}_f(\mathcal{U}_{\mathbf{NN}}^{1,\ast}), \
    \mathcal{R}_s^{\mathcal{I},\bu} : = -\mathcal{I}_{s, \bu}(\mathcal{U}_{\mathbf{NN}}^{2,\ast}), \
    \mathcal{R}_s^{\mathcal{I}, \bv} : = -\mathcal{I}_{s, \bv}(\mathcal{U}_{\mathbf{NN}}^{2,\ast}).
\end{align*}
For the fluid part, we follow a similar derivation for the two-phase flow interface problem and obtain,
\begin{align*}
    \frac{\mathrm{d}}{\mathrm{d}t} \int_{\Omega_f} \bm{\rho}_f \frac{| \be_f^{\bv} |^2}{2} \mathrm{d} \bx + \int_{\Omega_f} \bm{\rho}_f \left( (\bv_f \cdot \nabla) \be_f^{\bv} \right) \cdot \be_f^{\bv}  \mathrm{d} \bx + \int_{\Omega_f} \bm{\rho}_f ( (\be_f^{\bv} \cdot \nabla)  \mathcal{V}_{\mathbf{NN}}^{f,*} ) \cdot \be_f^v \mathrm{d} \bx & \\
    - \int_{\Omega_f}\left( \nabla \cdot 2\mu_f \bD(\be_f^{\bv})\right) \cdot \be_f^{\bv} \mathrm{d} \bx + \int_{\Omega_f} (\be_f^{\bv} \cdot \nabla) \be_f^p \mathrm{d} \bx &= \int_{\Omega_f} \mathcal{R}_{f}^{\bv} \cdot \be_f^{\bv} \mathrm{d} \bx.
\end{align*}
For the structural part, we multiply~\eqref{eqn:err-u}$_3$ by~$\frac{\partial \be_s^{\bu}}{\partial t}$, integrate over $\Omega_s$ and conduct integration by parts to obtain,
\begin{align*}
\frac{1}{2}\frac{\mathrm{d}}{\mathrm{d}t} \int_{\Omega_s}
\bm{\rho_s} \left|\frac{\partial \be_s^{\bu}}{\partial t}\right|^2
\mathrm{d} \bx + \frac{1}{2} \frac{\mathrm{d}}{\mathrm{d}t}
\int_{\Omega_s} 2 \mu_s |\varepsilon(\be_{s}^{\bu})|^2 + \lambda_s
|\nabla \cdot \be_{s}^{\bu}|^2 \mathrm{d}\bx = \int_{\Omega_s}
\mathcal{R}_s^{\bu} \cdot \frac{\partial \be_s^{\bu}}{\partial t}
\mathrm{d} \bx + \int_{\partial \Omega_s}
\bm{\sigma}(\be_s^{\bu})\bn_s \cdot \frac{\partial
\be_s^{\bu}}{\partial t}\mathrm{d}s.
\end{align*}
Apply integration by parts to the fluid part as well, and sum up both the fluid and the structural parts, yield
\begin{align}
& \qquad  \frac{1}{2} \left[\frac{\mathrm{d}}{\mathrm{d}t} \int_{\Omega_f} \bm{\rho}_f | \be_f^{\bv} |^2 \mathrm{d} \bx + \frac{\mathrm{d}}{\mathrm{d}t} \int_{\Omega_s} \bm{\rho_s} \left|\frac{\partial \be_s^{\bu}}{\partial t}\right|^2 \mathrm{d} \bx +  \frac{\mathrm{d}}{\mathrm{d}t} \int_{\Omega_s} 2 \mu_s |\varepsilon(\be_{s}^{\bu})|^2 + \lambda_s |\nabla \cdot \be_{s}^{\bu}|^2 \mathrm{d}\bx \right] \notag\\
&   =- \int_{\Omega_f} \bm{\rho}_f \left( (\bv_f \cdot \nabla) \be_f^{\bv} \right) \cdot \be_f^{\bv}  \mathrm{d} \bx  - \int_{\Omega_f} \bm{\rho}_f ( (\be_f^{\bv} \cdot \nabla)  \mathcal{V}_{\mathbf{NN}}^{f,*} ) \cdot \be_f^{\bv} \mathrm{d} \bx  - 2\mu_f \int_{\Omega_f} | \bD(\be_f^{\bv}) |^2 \mathrm{d} \bx \notag\\
& \quad + \int_{\Omega_f} \mathcal{R}_f^{\bv} \cdot \be_f^{\bv} \mathrm{d} \bx +  \int_{\Omega_f} \mathcal{R}^{\nabla \cdot}_{f} e_f^p \mathrm{d}\bx + \int_{\Omega_s} \mathcal{R}_s^{\bu} \cdot \frac{\partial \be_s^{\bu}}{\partial t} \mathrm{d} \bx   \notag\\
& \quad +   \int_{\partial \Omega_f \backslash \Gamma} \mathcal{R}_f^{\mathcal{B}} \cdot \bm{\sigma}_f(\be_f^{\bv}, e_f^p) \bn_f \mathrm{d} s + \int_{\partial \Omega_s \backslash \Gamma} \bm{\sigma}(\be_s^{\bu}) \bn_s \cdot \frac{\partial \mathcal{R}_s^{\mathcal{B}}}{\partial t} \mathrm{d}s  \notag\\
& \quad + \int_{\Gamma} \mathcal{R}_{\Gamma}^{\bv} \left( \frac{
\bm{\sigma}_f(\be_f^{\bv}, e_f^p) \bn_f -
\bm{\sigma}_s(\be_s^{\bu})\bn_s }{2}  \right) \mathrm{d} s +
\int_{\Gamma} \mathcal{R}_{\Gamma}^{\bm{\sigma}} \left[
\frac{1}{2}\left( \be_f^{\bv} + \frac{ \partial
\be_s^{\bu}}{\partial t} \right) \right]
\mathrm{d}s.\label{error_FSI1}
\end{align}
The first two terms on the right hand side of (\ref{error_FSI1}) can be estimated as before and other terms can be handled by Cauchy-Schwarz inequality and Young's inequality. This, therefore, leads to the following error estimation,
\begin{align}
& \qquad  \left[\frac{\mathrm{d}}{\mathrm{d}t} \int_{\Omega_f} \bm{\rho}_f | \be_f^{\bv} |^2 \mathrm{d} \bx + \frac{\mathrm{d}}{\mathrm{d}t} \int_{\Omega_s} \bm{\rho_s} \left|\frac{\partial \be_s^{\bu}}{\partial t}\right|^2 \mathrm{d} \bx +  \frac{\mathrm{d}}{\mathrm{d}t} \int_{\Omega_s} 2 \mu_s |\varepsilon(\be_{s}^{\bu})|^2 + \lambda_s |\nabla \cdot \be_{s}^{\bu}|^2 \mathrm{d}\bx \right]  \notag\\
& \leq C_1 \left[ \int_{\Omega_f} |\mathcal{R}_f^{\bv}|^2 \mathrm{d} \bx + \int_{\Omega_f} |\mathcal{R}_f^{\nabla \cdot}|^2 \mathrm{d} \bx  + \int_{\Omega_s} |\mathcal{R}_s^{\bu} |^2 \mathrm{d} \bx + \int_{\partial \Omega_f\backslash \Gamma} |\mathcal{R}_f^{\mathcal{B}}|^2 \mathrm{d} s + \int_{\partial \Omega_s \backslash \Gamma} \left| \frac{\partial \mathcal{R}_s^{\mathcal{B}}}{\partial t} \right|^2 \mathrm{d}s +  \int_{\Gamma} |\mathcal{R}_{\Gamma}^{\bv}|^2 \mathrm{d} s +   \int_{\Gamma} |\mathcal{R}_{\Gamma}^{\bm{\sigma}}|^2 \mathrm{d} s   \right]^{\frac{1}{2}} \notag\\
& \quad + C_2 \int_{\Omega_f} \bm{\rho_f} |\be_f^{\bv}|^2 \mathrm{d}
\bx,\label{error_FSI1_1}
\end{align}
where $C_1$ and $C_2$ are positive constants depending on $\| \bv_f
\|_{C^2(\overline{\Omega_i} \times [0,T])}$, $\| p_f
\|_{C^1(\overline{\Omega_f} \times [0,T])}$, $\|
\mathcal{V}_{\mathbf{NN}}^{f,*} \|_{C^2(\overline{\Omega_f} \times
[0,T])}$, $\| \mathcal{P}_{\mathbf{NN}}^{f,*}
\|_{C^1(\overline{\Omega_f} \times [0,T])}$, $\| \bu_s
\|_{C^2(\overline{\Omega_s} \times [0,T])}$, $\|
\mathcal{U}_{\mathbf{NN}}^{s,*} \|_{C^2(\overline{\Omega_s} \times
[0,T])}$, $d$, $\bm{\rho}_f$, $\bm{\rho}_s$, $\mu_f$, $\mu_s$,
$\lambda_s$, $\Omega_f$, and~$\Omega_s$.  For any $0\leq \tau \leq
T$, we integrate (\ref{error_FSI1_1}) over time to obtain
\begin{align}
& \qquad  \int_{\Omega_f} \bm{\rho}_f | \be_f^{\bv}(\bx_f, \tau) |^2 \mathrm{d} \bx +  \int_{\Omega_s} \bm{\rho_s} \left|\frac{\partial \be_s^{\bu}}{\partial t}(\bx_s, \tau)\right|^2 \mathrm{d} \bx + \int_{\Omega_s} 2 \mu_s |\varepsilon(\be_{s}^{\bu}(\bx_s,\tau))|^2 + \lambda_s |\nabla \cdot \be_{s}^{\bu}(\bx_s,\tau)|^2 \mathrm{d}\bx   \notag\\
& \leq \int_{\Omega_f} \bm{\rho}_f |\mathcal{R}_f^{\mathcal{I}}|^2 \mathrm{d} \bx + \int_{\Omega_s} \bm{\rho}_s |\mathcal{R}_s^{\mathcal{I},\bv}|^2 \mathrm{d}\bx + \int_{\Omega_s} 2 \mu_s |\varepsilon(\mathcal{R}_{s}^{\mathcal{I},\bu})|^2 + \lambda_s |\nabla \cdot \mathcal{R}_s^{\mathcal{I},\bu}|^2 \mathrm{d} \bx + \int_{\Omega_s} |\mathcal{R}_s^{\mathcal{I},\bu}|^2 \mathrm{d} \bx \notag\\
& \quad + C_1 T^{\frac{1}{2}} \left[ \int_0^T \left( \int_{\Omega_f} |\mathcal{R}_f^{\bv}|^2 \mathrm{d} \bx + \int_{\Omega_f} |\mathcal{R}_f^{\nabla \cdot}|^2 \mathrm{d} \bx  + \int_{\Omega_s} |\mathcal{R}_s^{\bu} |^2 \mathrm{d} \bx \right. \right. \notag\\
& \qquad \qquad \  \left. \left. + \int_{\partial \Omega_f\backslash \Gamma} |\mathcal{R}_f^{\mathcal{B}}|^2 \mathrm{d} s +  \int_{\partial \Omega_s \backslash \Gamma} \left| \frac{\partial \mathcal{R}_s^{\mathcal{B}}}{\partial t} \right|^2 \mathrm{d}s + \int_{\partial_{\Omega_s} \backslash \Gamma} \left| \mathcal{R}_s^{\mathcal{B}} \right|^2 \mathrm{d}s  +  \int_{\Gamma} |\mathcal{R}_{\Gamma}^{\bv}|^2 \mathrm{d}s +   \int_{\Gamma} |\mathcal{R}_{\Gamma}^{\bm{\sigma}}|^2 \mathrm{d} s \right) \mathrm{d} t  \right]^{\frac{1}{2}} \notag\\
& \quad + C_2 \int_0^\tau \left( \int_{\Omega_f} \bm{\rho}_f |\be_f^{\bv}(\bx_f, t)|^2 \mathrm{d} \bx \right) \mathrm{d}t \label{errorestimate-FSI1}\\
& =: \mathcal{R} + C_2 \int_0^\tau \left( \int_{\Omega_f}
\bm{\rho}_f |\be_f^{\bv}(\bx_f, t)|^2 \mathrm{d} \bx \right)
\mathrm{d}t.\notag
\end{align}
Note that the fourth and the tenth integral terms on the right-hand side of (\ref{errorestimate-FSI1}), i.e., $\int_{\Omega_s} |\mathcal{R}_s^{\mathcal{I},\bu}|^2 \mathrm{d} \bx$ and $\int_{\partial_{\Omega_s} \backslash \Gamma} \left| \mathcal{R}_s^{\mathcal{B}} \right|^2 \mathrm{d}s$, are not directly derived from the time integration. We add them based on our practical experiences since they provide better accuracy in our numerical experiments. On the other hand, these two terms also directly come from the LS formulation of the initial and the boundary condition of the structural equation, respectively. Without these two terms, then only the derivatives of $\mathcal{R}_s^{\mathcal{I}, \bu}$ and $\mathcal{R}_s^{\mathcal{B}}$ (i.e., the third and the ninth integral terms on the right-hand side) are involved, which can only be determined up to a constant. Adding the fourth and the tenth integral terms helps to determine the correct constants that match the given initial and boundary conditions of the structural part. 

Now applying the Gr\"{o}nwall's inequality and integrating again over $[0,T]$, we have
\begin{align*} 
& \quad 	\int_{0}^T \left(
\int_{\Omega_f} \bm{\rho}_f | \be_f^{\bv}(\bx_f, t) |^2 \mathrm{d}
\bx +  \int_{\Omega_s} \bm{\rho_s} \left|\frac{\partial
\be_s^{\bu}}{\partial t}(\bx_s, t)\right|^2 \mathrm{d} \bx +
\int_{\Omega_s} 2 \mu_s |\varepsilon(\be_{s}^{\bu}(\bx_s,t))|^2 +
\lambda_s |\nabla \cdot \be_{s}^{\bu}(\bx_s,t)|^2 \mathrm{d}\bx
\right) \mathrm{d}t \\
&\leq (T + C_2T^2 e^{C_2T}) \mathcal{R}.
\end{align*}
Following the similar steps in the proof for
Theorem~\ref{thm:error-two-phase-fluid}, we can derive the following
generalization error bound for the FSI problem with the wave-type
structural equation.
\begin{thm}\label{thm:error-FSI}
Let~$\bv_f \in C^2(\overline{\Omega_f} \times [0,T])$, $p_f \in C^1(\overline{\Omega_f} \times [0,T])$, and~$\bu_s \in C^2(\overline{\Omega_s} \times [0,T])$ be the classical solutions of the FSI problem with the wave-type structural equation, correspondingly, let~$\mathcal{V}_{\mathbf{NN}}^{f,*} \in C^2(\overline{\Omega_f} \times [0,T])$, $\mathcal{P}_{\mathbf{NN}}^{f,*} \in C^1(\overline{\Omega_f} \times [0,T]) $, and~$\mathcal{U}_{\mathbf{NN}}^{s,*} \in C^2(\overline{\Omega_s} \times [0,T])$ be the numerical approximations obtained by the DNN/meshfree method. We have the following generalization error estimation,
\begin{align*}
& \quad \int_{0}^T \left( \int_{\Omega_f} \bm{\rho}_f | \be_f^{\bv}(\bx_f, s) |^2 \mathrm{d} \bx +  \int_{\Omega_s} \bm{\rho_s} \left|\frac{\partial \be_s^{\bu}}{\partial t}(\bx_s, s)\right|^2 \mathrm{d} \bx + \int_{\Omega_s} 2 \mu_s |\varepsilon(\be_{s}^{\bu}(\bx_s,s))|^2 + \lambda_s |\nabla \cdot \be_{s}^{\bu}(\bx_s,s)|^2 \mathrm{d}\bx \right) \mathrm{d}t      \\
& \leq
C_1T(1 + C_2T e^{C_2T}) \left[  \mathcal{F}_{\mathcal{I}_f}(\bm{\Theta}^*) +  \mathcal{F}_{\mathcal{I}_{s}}(\bm{\Theta}^*) +  C_{quad}^{\mathcal{I}_f} M_{\mathcal{I}_f}^{-\alpha_{\mathcal{I}_f}} + C_{quad}^{\mathcal{I}_{s}} M_{\mathcal{I}_{s}}^{-\alpha_{\mathcal{I}_{s}}} \right] \\
& \quad + C_1 T^{\frac{3}{2}}(1 + C_2T e^{C_2T}) \left[
\mathcal{F}_{\mathcal{L}_f}(\bm{\Theta}^*) +
\mathcal{F}_{\mathcal{L}_s}(\bm{\Theta}^*) +
\mathcal{F}_{\mathcal{B}_f}(\bm{\Theta}^*) +
\mathcal{F}_{\mathcal{B}_s}(\bm{\Theta}^*) +
\mathcal{F}_{\Gamma}(\bm{\Theta}^*)
 \right. \\
& \qquad \qquad \qquad \qquad \quad \ \qquad \left. +
C_{quad}^{\mathcal{F}_f} M_{\mathcal{F}_f}^{-\alpha_{\mathcal{F}_f}}
+ C_{quad}^{\mathcal{F}_s}
M_{\mathcal{F}_s}^{-\alpha_{\mathcal{F}_s}} +
C_{quad}^{\mathcal{B}_f} M_{\mathcal{B}_f}^{-\alpha_{\mathcal{B}_f}}
+ C_{quad}^{\mathcal{B}_s}
M_{\mathcal{B}_s}^{-\alpha_{\mathcal{B}_s}} +
C_{quad}^{\Gamma}M_{\Gamma}^{-\alpha_{\Gamma}}
\right]^{\frac{1}{2}},
\end{align*}
where~$C_1$ and~$C_2$ are positive constants depending on~$\| \bv_f
\|_{C^2(\overline{\Omega_f} \times [0,T])}$, $\| p_f
\|_{C^1(\overline{\Omega_f} \times [0,T])}$, $\| \bu_s
\|_{C^2(\overline{\Omega_s} \times [0,T])}$, $\|
\mathcal{V}_{\mathbf{NN}}^{f,*} \|_{C^2(\overline{\Omega_f} \times
[0,T])}$, $\| \mathcal{P}_{\mathbf{NN}}^{f,*}
\|_{C^1(\overline{\Omega_f} \times [0,T])}$, $\|
\mathcal{U}_{\mathbf{NN}}^{s,*} \|_{C^2(\overline{\Omega_s} \times
[0,T])}$, $\bm{\rho}_f$, $\bm{\rho}_s$, $\mu_f$, $\mu_s$,
$\lambda_s$, $\Omega_f$, $\Omega_s$, and~$d$.
\end{thm}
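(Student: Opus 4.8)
The plan is to follow the same two-step template as the proof of Theorem~\ref{thm:error-two-phase-fluid}, taking as the starting point the Gr\"onwall energy estimate derived immediately above the statement,
\begin{equation*}
\int_{0}^T \Big( \int_{\Omega_f} \bm{\rho}_f |\be_f^{\bv}|^2 \,\mathrm{d}\bx + \int_{\Omega_s} \bm{\rho}_s \Big|\tfrac{\partial \be_s^{\bu}}{\partial t}\Big|^2 \mathrm{d}\bx + \int_{\Omega_s} 2\mu_s|\varepsilon(\be_s^{\bu})|^2 + \lambda_s |\nabla\cdot\be_s^{\bu}|^2 \,\mathrm{d}\bx \Big)\,\mathrm{d}t \leq (T + C_2 T^2 e^{C_2 T})\,\mathcal{R},
\end{equation*}
where $\mathcal{R}$ collects the continuous $L^2$ residual integrals on the right-hand side of~\eqref{errorestimate-FSI1}. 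Since that estimate is already in hand, the remaining work is purely to replace each continuous residual norm inside $\mathcal{R}$ by its corresponding discrete loss functional plus a quadrature error.

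First I would apply Assumption~\ref{assumption:quad} termwise to $\mathcal{R}$. Each integral of the form $\int|\mathcal{R}_\bullet|^2$ is exactly the continuous counterpart of a Monte-Carlo sum $\mathcal{F}_\bullet(\bm{\Theta}^*)$ from~\eqref{eqn:MSEs}, so~\eqref{eqn:quad} yields $\int|\mathcal{R}_\bullet|^2 \leq \mathcal{F}_\bullet(\bm{\Theta}^*) + C_{quad}^{\bullet} M_\bullet^{-\alpha_\bullet}$, up to the constant density/Lam\'e prefactors which I absorb into $C_1$. The identifications are: the four initial-residual integrals pair with $\mathcal{F}_{\mathcal{I}_f}$ and $\mathcal{F}_{\mathcal{I}_s}$; the interior residuals $\mathcal{R}_f^{\bv},\mathcal{R}_f^{\nabla\cdot},\mathcal{R}_s^{\bu}$ with $\mathcal{F}_{\mathcal{L}_f},\mathcal{F}_{\mathcal{L}_s}$; the boundary residuals with $\mathcal{F}_{\mathcal{B}_f},\mathcal{F}_{\mathcal{B}_s}$; and the two interface residuals $\mathcal{R}_\Gamma^{\bv},\mathcal{R}_\Gamma^{\bm{\sigma}}$ with $\mathcal{F}_\Gamma$. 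Collecting these bounds reproduces a bound on $\mathcal{R}$ of the same shape as in the two-phase case: an initial-condition block outside the square root, plus $C_1 T^{1/2}[\,\cdots\,]^{1/2}$ for the space-time residual block.

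The main subtlety to get right, and where I would focus attention, is the structural side, because the wave-equation energy identity controls only $\tfrac{\partial \be_s^{\bu}}{\partial t}$ and $\varepsilon(\be_s^{\bu})$, not $\be_s^{\bu}$ itself; the displacement is determined only up to a rigid constant consistent with vanishing strain. This is precisely why $\mathcal{F}_{\mathcal{I}_s}$ and $\mathcal{F}_{\mathcal{B}_s}$ were augmented with the plain $L^2$ displacement terms $\int_{\Omega_s}|\bu_s^0-\mathcal{U}^s_{\mathbf{NN}}|^2$ and $\int_{\partial\Omega_s\backslash\Gamma}|\bu_s^b-\mathcal{U}^s_{\mathbf{NN}}|^2$ in~\eqref{LS4FSI}. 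Matching these augmented terms with the fourth and tenth integral terms $\int_{\Omega_s}|\mathcal{R}_s^{\mathcal{I},\bu}|^2$ and $\int_{\partial\Omega_s\backslash\Gamma}|\mathcal{R}_s^{\mathcal{B}}|^2$ that were inserted into $\mathcal{R}$ in~\eqref{errorestimate-FSI1} is what renders the quadrature identification of $\mathcal{F}_{\mathcal{I}_s}$ and $\mathcal{F}_{\mathcal{B}_s}$ fully consistent and pins down the undetermined displacement constant.

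Finally I would substitute the resulting bound on $\mathcal{R}$ back into the Gr\"onwall estimate, use $T + C_2 T^2 e^{C_2 T} = T(1 + C_2 T e^{C_2 T})$, and distribute this prefactor over the two blocks to obtain the factors $C_1 T(1 + C_2 T e^{C_2 T})$ on the initial terms and $C_1 T^{3/2}(1 + C_2 T e^{C_2 T})$ on the square-root block, exactly as stated. No genuinely new inequality is required beyond the already-derived energy identity and the quadrature assumption; the only care needed is the termwise bookkeeping and the structural displacement point above.
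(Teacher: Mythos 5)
Your proposal is correct and follows essentially the same route as the paper's own proof: bound $\mathcal{R}$ termwise via Assumption~\ref{assumption:quad}, identify each continuous residual integral with its discrete loss functional (including the augmented $L^2$ displacement terms in $\mathcal{F}_{\mathcal{I}_s}$ and $\mathcal{F}_{\mathcal{B}_s}$), and substitute back into the Gr\"onwall estimate with the factorization $T + C_2T^2e^{C_2T} = T(1+C_2Te^{C_2T})$. Your interface identification is in fact cleaner than the paper's displayed bound, which contains a typo (it lists $\mathcal{F}_{\mathcal{L}_f}(\bm{\Theta}^*)$ a second time where $\mathcal{F}_{\Gamma}(\bm{\Theta}^*)$ should appear).
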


\begin{proof}
The result is obtained by using the following bound on~$\mathcal{R}$, which is a direct consequence of Assumption~\ref{assumption:quad},
\begin{align*}
\mathcal{R} & \leq
C_1\left(\mathcal{F}_{\mathcal{I}_f}(\bm{\Theta}^*) +
C_{quad}^{\mathcal{I}_f} M_{\mathcal{I}_f}^{-\alpha_{\mathcal{I}_f}}
+  \mathcal{F}_{\mathcal{I}_{s}}(\bm{\Theta}^*) +  C_{quad}^{\mathcal{I}_{s}} M_{\mathcal{I}_{s}}^{-\alpha_{\mathcal{I}_{s}}} \right) \\
& \quad + C_1 T^{\frac{1}{2}} \left[ \mathcal{F}_{\mathcal{L}_f}(\bm{\Theta}^*) + C_{quad}^{\mathcal{F}_f} M_{\mathcal{F}_f}^{-\alpha_{\mathcal{F}_f}} + \mathcal{F}_{\mathcal{L}_s}(\bm{\Theta}^*) + C_{quad}^{\mathcal{F}_s} M_{\mathcal{F}_s}^{-\alpha_{\mathcal{F}_s}} + \mathcal{F}_{\mathcal{B}_f}(\bm{\Theta}^*) + C_{quad}^{\mathcal{B}_f} M_{\mathcal{B}_f}^{-\alpha_{\mathcal{B}_f}}  \right. \\
& \qquad \qquad \ \ \ \left. + \mathcal{F}_{\mathcal{B}_s}(\bm{\Theta}^*) + C_{quad}^{\mathcal{B}_s} M_{\mathcal{B}_s}^{-\alpha_{\mathcal{B}_s}} +
\mathcal{F}_{\mathcal{L}_f}(\bm{\Theta}^*) + C_{quad}^{\Gamma} M_{\Gamma}^{-\alpha_{\Gamma}}  \right]^{\frac{1}{2}}.
\end{align*}
\end{proof}

\begin{remark}
The derivation above shows that two extra terms, i.e., the third and the ninth integral terms on the right-hand side of \eqref{LS4FSI} that we added to our DNN/meshfree method for solving the FSI problem with a wave-type structural equation naturally arise from the time integration and integration by parts shown in \eqref{errorestimate-FSI1}. For the theory we developed here, those two terms are needed. We also use those two terms in our implementation to verify the theoretical results we developed to solve the FSI problem with a wave-type structural equation.
\end{remark}

As discussed before, by bringing back the structural velocity~$\bv_s$, we can reformulate the FSI problem using a parabolic-like structural equation. In this case, let us consider the errors~$\be_f^{\bv} := \bv_f -
\mathcal{V}_{\mathbf{NN}}^{f,*}$, $e_f^p := p_f -
\mathcal{P}_{\mathbf{NN}}^{f,*}$, $\be_s^{\bu} := \bu_s -
\mathcal{U}_{\mathbf{NN}}^{s,*}$, and $\be_s^{\bv} := \bv_s -
\mathcal{V}_{\mathbf{NN}}^{s,*}$. Then, we have
\begin{align}
    \bm{\rho_f} \left( \frac{\partial \bm{e}_f^{\bm{v}}}{\partial t} + (\bv_f \cdot \nabla) \bv_f  - (\mathcal{V}_{\mathbf{NN}}^{f,*} \cdot \nabla )\mathcal{V}_{\mathbf{NN}}^{f,*}\right) - \nabla \cdot 2 \mu_i \bD(\be_f^{\bv}) + \nabla e_f^p &= \mathcal{R}^{\bv}_f, \quad \text{in} \ \Omega_f\times(0,T], \nonumber \\
    \nabla \cdot \bm{e}_f^{\bv} &= \mathcal{R}^{\nabla \cdot}_f, \quad \text{in} \ \Omega_f\times(0,T], \nonumber \\
    \bm{\rho_s} \left( \frac{\partial \be_{s}^{\bv}}{\partial t}  \right) - \nabla \cdot \bm{\sigma}_s(\be_{s}^{\bu}) & = \mathcal{R}_s^{\bv}, \quad \text{in} \ \Omega_s\times(0,T], \notag \\
    \frac{\partial \be_s^{\bu}}{\partial t} - \be_s^{\bv} &= \mathcal{R}_s^{\bu}, \quad \text{in} \ \Omega_s\times(0,T], \notag \\
    \be_{f}^{\bv} -  \bm{e}_s^{\bv} &= \mathcal{R}_{\Gamma}^{\bv}, \quad \text{on} \ \Gamma\times[0,T], \nonumber  \\
    \bm{\sigma}_f(\be_f^{\bv}, e_f^p) \bn_f + \bm{\sigma}_s(\be_s^{\bu}) \bn_s &= \mathcal{R}_{\Gamma}^{\sigma}, \quad \text{on} \ \Gamma\times[0,T],  \label{eqn:FSI-err-v}  \\
    e_f^{\bm{v}} &= \mathcal{R}^{\mathcal{B}}_f, \quad \text{on} \ \partial \Omega_f\backslash \Gamma\times[0,T], \nonumber  \\
    e_s^{\bm{v}} &= \mathcal{R}^{\mathcal{B}}_s, \quad \text{on} \ \partial \Omega_s\backslash \Gamma\times[0,T], \nonumber  \\
    e_f^{\bm{v}}(\bm{x}_f, 0) & = \mathcal{R}^{\mathcal{I}}_f, \quad \text{in} \ \Omega_f, \nonumber \\
    e_s^{\bm{u}}(\bm{x}_s, 0) & = \mathcal{R}^{\mathcal{I}, \bu}_s, \quad \text{in} \ \Omega_s, \nonumber  \\
    e_s^{\bm{v}}(\bm{x}_s, 0) & = \mathcal{R}^{\mathcal{I}, \bv}_s, \quad \text{in} \ \Omega_s, \nonumber
\end{align}
where $\mathcal{U}^{1,*}_{\mathbf{NN}} = (\mathcal{V}^{f,*}_{\mathbf{NN}}, \mathcal{P}^{f,*}_{\mathbf{NN}})$, $\mathcal{U}^{2,*}_{\mathbf{NN}} = (\mathcal{U}^{s,*}_{\mathbf{NN}}, \mathcal{V}^{s,*}_{\mathbf{NN}})$,
\begin{align*}
    \begin{pmatrix}
        \mathcal{R}^{\bv}_f \\
        \mathcal{R}^{\nabla \cdot}_f
    \end{pmatrix} &:= -\mathcal{L}_{f}(\mathcal{U}_{\mathbf{NN}}^{1,\ast}), \
    \begin{pmatrix}
        \mathcal{R}^{\bv}_s \\
        \mathcal{R}^{\bu}_s
    \end{pmatrix} := - \mathcal{L}_s(\mathcal{U}_{\mathbf{NN}}^{2,*}), \
    \begin{pmatrix}
        \mathcal{R}_{\Gamma}^{\bv} \\
        \mathcal{R}_{\Gamma}^{\bm{\sigma}}
    \end{pmatrix}
    := -\mathcal{T}(\mathcal{U}_{\mathbf{NN}}^{1,\ast},\mathcal{U}_{\mathbf{NN}}^{2,\ast}), \\
    \mathcal{R}^{\mathcal{B}}_f &:= -\mathcal{B}_f(\mathcal{U}_{\mathbf{NN}}^{1,\ast}), \
    \mathcal{R}^{\mathcal{B}}_s := -\mathcal{B}_s(\mathcal{U}_{\mathbf{NN}}^{2,\ast}), \
    \mathcal{R}_f^{\mathcal{I}} : = -\mathcal{I}_f(\mathcal{U}_{\mathbf{NN}}^{1,\ast}), \
    \begin{pmatrix}
    \mathcal{R}_s^{\mathcal{I},\bu}  \\
    \mathcal{R}_s^{\mathcal{I},\bv}
    \end{pmatrix}
: = -\mathcal{I}_s(\mathcal{U}_{\mathbf{NN}}^{1,\ast}).
\end{align*}

For the fluid part, we handle it as before. For the structural part, we multiply~$\be_s^{\bv}$ to~\eqref{eqn:FSI-err-v}$_3$ and integrate over $\Omega_s$, leading to
\begin{align*}
\frac{1}{2}\frac{\mathrm{d}}{\mathrm{d}t} \int_{\Omega_s}
\bm{\rho}_s |\be_s^{\bv}|^2 \mathrm{d} \bx & = \int_{\Omega_s}
(\nabla \cdot \bS_s(\be_s^{\bu})) \cdot \be_s^{\bv} \mathrm{d} \bx +
\int_{\Omega_s} \mathcal{R}_s^{\bv} \cdot \be_s^{\bv} \mathrm{d}\bx.
\end{align*}
Apply integration by parts, use the fact that $\be_s^{\bv} = \frac{\partial \be_s^{\bu}}{\partial t} - \mathcal{R}_s^{\bu}$, and sum up the fluid and the structural parts, yield
\begin{align*}
& \quad \frac{1}{2} \left[ \frac{\mathrm{d}}{\mathrm{d}t} \int_{\Omega_f} \bm{\rho}_f |\be_f^{\bv}|^2 \mathrm{d} \bx + \frac{\mathrm{d}}{\mathrm{d}t} \int_{\Omega_s} \bm{\rho}_s |\be_s^{\bv}|^2 \mathrm{d} \bx + \frac{\mathrm{d}}{\mathrm{d}t} \int_{\Omega_s} 2 \mu_s |\varepsilon(\be_{s}^{\bu})|^2 + \lambda_s |\nabla \cdot \be_{s}^{\bu}|^2 \mathrm{d}\bx \right] \\
& = - \int_{\Omega_f} \bm{\rho}_f \left( (\bv_f \cdot \nabla) \be_f^{\bv} \right) \cdot \be_f^{\bv}  \mathrm{d} \bx  - \int_{\Omega_f} \bm{\rho}_f ( (\be_f^{\bv} \cdot \nabla)  \mathcal{V}_{\mathbf{NN}}^{f,*} ) \cdot \be_f^{\bv} \mathrm{d} \bx  - 2\mu_f \int_{\Omega_f} | \bD(\be_f^{\bv}) |^2 \mathrm{d} \bx \\
& \quad + \int_{\Omega_f} \mathcal{R}_f^{\bv} \cdot \be_f^{\bv} \mathrm{d} \bx +  \int_{\Omega_f} \mathcal{R}^{\nabla \cdot}_{f} e_f^p \mathrm{d}\bx + \int_{\Omega_s} \mathcal{R}_s^{\bv} \cdot \be_s^{\bv} \mathrm{d}\bx
+ \int_{\Omega_s} 2\mu_s \varepsilon(\be_s^{\bu}) \varepsilon(\mathcal{R}_s^{\bu}) + \lambda_s (\nabla \cdot \be_s^{\bu}) (\nabla \cdot \mathcal{R}_s^{\bu}) \mathrm{d}\bx 
\\
& \quad + \int_{\partial \Omega_s \backslash \Gamma} \mathcal{R}_f^{\mathcal{B}} \cdot \bS(\be_f^{\bv}, e_f^p) \bn_f \mathrm{d} s + \int_{\partial \Omega_s \backslash \Gamma} \mathcal{R}_s^{\mathcal{B}} \cdot \bS(\be_s^{\bu})\bn_s \mathrm{d}s \\
& \quad + \int_{\Gamma} \mathcal{R}_{\Gamma}^{\bv} \left( \frac{
\bm{\sigma}_f(\be_f^{\bv}, e_f^p) \bn_f -
\bm{\sigma}_s(\be_s^{\bu})\bn_s }{2}  \right) \mathrm{d} s +
\int_{\Gamma} \mathcal{R}_{\Gamma}^{\bm{\sigma}} \left(
\frac{\be_f^{\bv} + \be_s^{\bv}}{2} \right) \mathrm{d}s.
\end{align*}
Following the same estimation method we carried out before, we obtain
\begin{align}
    & \qquad  \left[\frac{\mathrm{d}}{\mathrm{d}t} \int_{\Omega_f} \bm{\rho}_f | \be_f^{\bv} |^2 \mathrm{d} \bx + \frac{\mathrm{d}}{\mathrm{d}t} \int_{\Omega_s} \bm{\rho_s} | \be_s^{\bv}|^2 \mathrm{d} \bx +  \frac{\mathrm{d}}{\mathrm{d}t} \int_{\Omega_s} 2 \mu_s |\varepsilon(\be_{s}^{\bu})|^2 + \lambda_s |\nabla \cdot \be_{s}^{\bu}|^2 \mathrm{d}\bx \right]  \notag\\
    & \leq C_1 \left[ \int_{\Omega_f} |\mathcal{R}_f^{\bv}|^2 \mathrm{d} \bx + \int_{\Omega_f} |\mathcal{R}_f^{\nabla \cdot}|^2 \mathrm{d} \bx  + \int_{\Omega_s} |\mathcal{R}_s^{\bv} |^2 \mathrm{d} \bx + \int_{\Omega_s} 2\mu_s |\varepsilon(\mathcal{R}_{s}^{\bu})|^2 + \lambda_s |\nabla \cdot \mathcal{R}_{s}^{\bu}|^2 \mathrm{d}\bx \right.\notag\\
    & \qquad \ \  + \left. \int_{\partial \Omega_f\backslash \Gamma} |\mathcal{R}_f^{\mathcal{B}}|^2 \mathrm{d} s + \int_{\partial \Omega_s \backslash \Gamma} |\mathcal{R}_s^{\mathcal{B}} |^2 \mathrm{d}s +  \int_{\Gamma} |\mathcal{R}_{\Gamma}^{\bv}|^2 \mathrm{d} s +   \int_{\Gamma} |\mathcal{R}_{\Gamma}^{\bm{\sigma}}|^2 \mathrm{d} s   \right]^{\frac{1}{2}} \notag\\
    & \quad + C_2 \int_{\Omega_f} \bm{\rho}_f |\be_f^{\bv}|^2 \mathrm{d}
    \bx,\label{error_FSI2}
\end{align}
where $C_1$ and $C_2$ are positive constants depending on~$\| \bv_f
\|_{C^2(\overline{\Omega_f} \times [0,T])}$, $\| p_f
\|_{C^1(\overline{\Omega_f} \times [0,T])}$, $\|
\mathcal{V}_{\mathbf{NN}}^{f,*} \|_{C^2(\overline{\Omega_f} \times
[0,T])}$, $\| \mathcal{P}_{\mathbf{NN}}^{f,*}
\|_{C^1(\overline{\Omega_f} \times [0,T])}$, $\| \bu_s
\|_{C^2(\overline{\Omega_s} \times [0,T])}$, $\| \bv_s
\|_{C^1(\overline{\Omega_s} \times [0,T])}$, $\|
\mathcal{U}_{\mathbf{NN}}^{s,*} \|_{C^2(\overline{\Omega_s} \times
[0,T])}$, $\| \mathcal{V}_{\mathbf{NN}}^{s,*}
\|_{C^1(\overline{\Omega_s} \times [0,T])}$, $d$, $\bm{\rho}_f$,
$\bm{\rho}_s$, $\mu_f$, $\mu_s$, $\lambda_s$, $\Omega_f$, and
$\Omega_s$. For any $0\leq \tau \leq T$, we
integrate~\eqref{error_FSI2} over time to obtain
\begin{align*}
    & \qquad  \int_{\Omega_f} \bm{\rho}_f | \be_f^{\bv}(\bx_f,\tau) |^2 \mathrm{d} \bx +  \int_{\Omega_s} \bm{\rho_s} \left|\frac{\partial \be_s^{\bu}}{\partial t}(\bx_s,\tau)\right|^2 \mathrm{d} \bx + \int_{\Omega_s} 2 \mu_s |\varepsilon(\be_{s}^{\bu}(\bx_s,\tau))|^2 + \lambda_s |\nabla \cdot \be_{s}^{\bu}(\bx_s,\tau)|^2 \mathrm{d}\bx   \\
    & \leq \int_{\Omega_f} \bm{\rho}_f |\mathcal{R}_f^{\mathcal{I}}|^2 \mathrm{d} \bx + \int_{\Omega_s} \bm{\rho}_s |\mathcal{R}_s^{\mathcal{I},\bv}|^2 \mathrm{d}\bx + \int_{\Omega_s} 2 \mu_s |\varepsilon(\mathcal{R}_{s}^{\mathcal{I},\bu})|^2 + \lambda_s |\nabla \cdot \mathcal{R}_s^{\mathcal{I},\bu}|^2 \mathrm{d} \bx +  \int_{\Omega_s} \left| \mathcal{R}_{s}^{\mathcal{I},\bu} \right|^2 \mathrm{d} \bx   \\
    & \quad + C_1 T^{\frac{1}{2}} \left[ \int_0^T \left( \int_{\Omega_f} |\mathcal{R}_f^{\bv}|^2 \mathrm{d} \bx + \int_{\Omega_f} |\mathcal{R}_f^{\nabla \cdot}|^2 \mathrm{d} \bx  + \int_{\Omega_s} |\mathcal{R}_s^{\bv} |^2 \mathrm{d} \bx  +  \int_{\Omega_s} 2\mu_s |\varepsilon(\mathcal{R}_{s}^{\bu})|^2 + \lambda_s |\nabla \cdot \mathcal{R}_{s}^{\bu}|^2 \mathrm{d}\bx    \right. \right. \\
    & \qquad \qquad \  \left. \left. +  \int_{\Omega_s} \left| \mathcal{R}_s^{\bu} \right|^2 \mathrm{d} \bx  + \int_{\partial \Omega_f\backslash \Gamma} |\mathcal{R}_f^{\mathcal{B}}|^2 \mathrm{d} s +  \int_{\partial \Omega_s \backslash \Gamma} | \mathcal{R}_s^{\mathcal{B}} |^2 \mathrm{d}s +  \int_{\Gamma} |\mathcal{R}_{\Gamma}^{\bv}|^2 \mathrm{d} s +   \int_{\Gamma} |\mathcal{R}_{\Gamma}^{\bm{\sigma}}|^2 \mathrm{d} s \right) \mathrm{d} t  \right]^{\frac{1}{2}} \\
    & \quad + C_2 \int_0^\tau \left( \int_{\Omega_f} \bm{\rho}_f |\be_f^{\bv}(\bx_f, t)|^2 \mathrm{d} \bx \right)
    \mathrm{d}t.
\end{align*}
On the right-hand side of the above inequality, we add the fourth and the ninth integral terms, i.e., $\int_{\Omega_s} \left| \mathcal{R}_{s}^{\mathcal{I},\bu} \right|^2 \mathrm{d} \bx$ and $\int_{\Omega_s} \left| \mathcal{R}_s^{\bu} \right|^2 \mathrm{d} \bx$, as two extra terms due to the same reason as shown for the case of FSI model with a wave-type structural equation. On the other hand, these two terms also directly come from the LS formulation of the parabolic-like structural equation defined in Section~\ref{sec:FSI-example}. Without those two terms, only the derivatives of $\mathcal{R}_s^{\mathcal{I}, \bu}$ and $\mathcal{R}_s^{\bu}$ (i.e., the third and the eighth integral terms on the right-hand side) are involved, which can only be determined up to a constant. Adding the fourth and the ninth integral terms helps to determine the correct constants. Applying the Gr\"{o}nwall's inequality, integrating again over~$[0,T]$, and employing Assumption~\eqref{assumption:quad} on the residuals, we can similarly derive the following generalization error bound for the FSI model with a parabolic-like structural equation.
\begin{thm}\label{thm:error-FSI-parabolic}
Let~$\bv_f \in C^2(\overline{\Omega_f} \times [0,T])$, $p_f \in
C^1(\overline{\Omega_f} \times [0,T])$, $\bu_s \in
C^2(\overline{\Omega_s} \times [0,T])$, and $\bv_s \in
C^1(\overline{\Omega_s} \times [0,T])$ be the classical solutions of the FSI problem with a parabolic-like structural equation, correspondingly, let $\mathcal{V}_{\mathbf{NN}}^{f,*} \in
C^2(\overline{\Omega_f} \times [0,T])$,
$\mathcal{P}_{\mathbf{NN}}^{f,*} \in C^1(\overline{\Omega_f} \times
[0,T])$, $ \mathcal{U}_{\mathbf{NN}}^{s,*} \in
C^2(\overline{\Omega_s} \times [0,T])$, $
\mathcal{V}_{\mathbf{NN}}^{s,*} \in C^1(\overline{\Omega_s} \times
[0,T])$ be the numerical approximations obtained by the DNN/meshfree
method. We have the following generalization error estimation,
\begin{align*}
        & \quad \int_{0}^T \left( \int_{\Omega_f} \bm{\rho}_f | \be_f^{\bv}(\bx_f, t) |^2 \mathrm{d} \bx +  \int_{\Omega_s} \bm{\rho_s} |\be_s^{\bv}(\bx_s, t)|^2 \mathrm{d} \bx + \int_{\Omega_s} 2 \mu_s |\varepsilon(\be_{s}^{\bu}(\bx_s,t))|^2 + \lambda_s |\nabla \cdot \be_{s}^{\bu}(\bx_s,t)|^2 \mathrm{d}\bx \right) \mathrm{d}t     \\
        & \leq C_1T(1 + C_2T e^{C_2T}) \left[  \mathcal{F}_{\mathcal{I}_f}(\bm{\Theta}^*) +  \mathcal{F}_{\mathcal{I}_{s}}(\bm{\Theta}^*) +  C_{quad}^{\mathcal{I}_f} M_{\mathcal{I}_f}^{-\alpha_{\mathcal{I}_f}}  + C_{quad}^{\mathcal{I}_{s}} M_{\mathcal{I}_{s}}^{-\alpha_{\mathcal{I}_{s}}}
         \right] \\
        & \quad + C_1 T^{\frac{3}{2}}(1 + C_2T e^{C_2T}) \left[ \mathcal{F}_{\mathcal{L}_f}(\bm{\Theta}^*) + \mathcal{F}_{\mathcal{L}_{s}}(\bm{\Theta}^*) +
        \mathcal{F}_{\mathcal{B}_f}(\bm{\Theta}^*) + \mathcal{F}_{\mathcal{B}_s}(\bm{\Theta}^*) +
        \mathcal{F}_{\Gamma}(\bm{\Theta}^*)
        \right. \\
        & \qquad \qquad \qquad \qquad \quad \ \qquad \left. + C_{quad}^{\mathcal{F}_f} M_{\mathcal{F}_f}^{-\alpha_{\mathcal{F}_f}}  + C_{quad}^{\mathcal{F}_{s, \bv}} M_{\mathcal{F}_{s, \bv}}^{-\alpha_{\mathcal{F}_{s, \bv}}} +
        C_{quad}^{\mathcal{F}_{s, \bu}} M_{\mathcal{F}_{s, \bu}}^{-\alpha_{\mathcal{F}_{s, \bu}}}  \right. \\
        & \qquad \qquad \qquad \qquad \quad \ \qquad \left. + C_{quad}^{\mathcal{B}_f} M_{\mathcal{B}_f}^{-\alpha_{\mathcal{B}_f}}  + C_{quad}^{\mathcal{B}_s} M_{\mathcal{B}_s}^{-\alpha_{\mathcal{B}_s}} + C_{quad}^{\Gamma} M_{\Gamma}^{-\alpha_{\Gamma}}
        \right]^{\frac{1}{2}},
    \end{align*}
where $C_1$ and $C_2$ are positive constants depending on $\| \bv_f
\|_{C^2(\overline{\Omega_f} \times [0,T])}$, $\| p_f
\|_{C^1(\overline{\Omega_f} \times [0,T])}$, $\| \bu_s
\|_{C^2(\overline{\Omega_s} \times [0,T])}$, $\| \bv_s
\|_{C^1(\overline{\Omega_s} \times [0,T])}$, $\|
\mathcal{V}_{\mathbf{NN}}^{f,*} \|_{C^2(\overline{\Omega_f} \times
[0,T])}$, $\| \mathcal{P}_{\mathbf{NN}}^{f,*}
\|_{C^1(\overline{\Omega_f} \times [0,T])}$, $\|
\mathcal{U}_{\mathbf{NN}}^{s,*} \|_{C^2(\overline{\Omega_s} \times
[0,T])}$, $\| \mathcal{V}_{\mathbf{NN}}^{s,*}
\|_{C^1(\overline{\Omega_s} \times [0,T])}$, $\bm{\rho}_f$,
$\bm{\rho}_s$, $\mu_f$, $\mu_s$, $\lambda_s$, $\Omega_f$,
$\Omega_s$, and $d$.
\end{thm}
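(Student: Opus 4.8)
The plan is to finish the argument by the same three-step template already used for Theorem~\ref{thm:error-two-phase-fluid} and Theorem~\ref{thm:error-FSI}, since the energy estimate~\eqref{error_FSI2} and its time-integrated consequence displayed just above the statement already carry the analytical weight. Writing $\mathcal{E}(\tau)$ for the combined fluid kinetic, structural kinetic, and structural elastic error energy at time $\tau$, that integrated inequality has the form
\[
\mathcal{E}(\tau) \leq \mathcal{R} + C_2 \int_0^\tau \left( \int_{\Omega_f} \bm{\rho}_f |\be_f^{\bv}(\bx_f, t)|^2 \, \mathrm{d}\bx \right) \mathrm{d}t .
\]
The first step is to apply Gr\"onwall's inequality to absorb the $C_2$-term, giving $\mathcal{E}(\tau) \leq (1 + C_2 T e^{C_2 T})\,\mathcal{R}$ uniformly in $\tau \in [0,T]$; the second step is to integrate this once more over $[0,T]$, producing the left-hand side of the claimed estimate together with the factor $T + C_2 T^2 e^{C_2 T}$ on the right.

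The third step is to bound $\mathcal{R}$ itself using Assumption~\ref{assumption:quad}: each space-time (or spatial) integral of a squared residual in the definition of $\mathcal{R}$ is replaced by its Monte-Carlo quadrature, which by~\eqref{eqn:quad} equals the associated discrete loss functional $\mathcal{F}_\bullet(\bm{\Theta}^*)$ up to an additive quadrature error $C_{quad}^\bullet M_\bullet^{-\alpha_\bullet}$. The only bookkeeping to watch is that the initial-condition residuals $\mathcal{R}_f^{\mathcal{I}}$, $\mathcal{R}_s^{\mathcal{I},\bu}$, $\mathcal{R}_s^{\mathcal{I},\bv}$ enter $\mathcal{R}$ without the $T^{1/2}$ weight, whereas the interior, boundary and interface residuals enter under the square root with the $T^{1/2}$ factor; this separation reproduces exactly the two-bracket structure of the asserted inequality once the identifications $\|\mathcal{R}\|^2 \mapsto \mathcal{F}_\bullet + C_{quad}^\bullet M_\bullet^{-\alpha_\bullet}$ are substituted. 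This mirrors the bound on $\mathcal{R}$ displayed in the proof of Theorem~\ref{thm:error-FSI}, and substituting it back completes the argument.

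The \emph{main obstacle} is not in this finishing step but in the energy derivation feeding~\eqref{error_FSI2}, which genuinely differs from the wave-type case. Because the parabolic-like formulation treats the structural displacement $\bu_s$ and velocity $\bv_s$ as independent unknowns, testing the structural momentum error equation~\eqref{eqn:FSI-err-v}$_3$ with $\be_s^{\bv}$ and integrating by parts does not directly yield the time derivative of the elastic energy; one must insert the auxiliary relation $\frac{\partial \be_s^{\bu}}{\partial t} = \be_s^{\bv} + \mathcal{R}_s^{\bu}$ to recast $\int_{\Omega_s} \bm{\sigma}_s(\be_s^{\bu})\,\varepsilon(\be_s^{\bv}) \, \mathrm{d}\bx$ into $\frac{\mathrm{d}}{\mathrm{d}t}$-form, which spawns the cross term $\int_{\Omega_s} \left( 2\mu_s\,\varepsilon(\be_s^{\bu})\,\varepsilon(\mathcal{R}_s^{\bu}) + \lambda_s (\nabla\cdot\be_s^{\bu})(\nabla\cdot\mathcal{R}_s^{\bu}) \right) \mathrm{d}\bx$. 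Estimating this cross term by Cauchy--Schwarz and Young's inequalities is precisely what forces the residual term $\int_{\Omega_s} \left( 2\mu_s|\varepsilon(\mathcal{R}_s^{\bu})|^2 + \lambda_s|\nabla\cdot\mathcal{R}_s^{\bu}|^2 \right) \mathrm{d}\bx$ onto the right-hand side, and it is the same structure that necessitates retaining the extra $L^2$-terms $\int_{\Omega_s}|\mathcal{R}_s^{\mathcal{I},\bu}|^2\,\mathrm{d}\bx$ and $\int_{\Omega_s}|\mathcal{R}_s^{\bu}|^2\,\mathrm{d}\bx$, which pin down the displacement up to its otherwise-undetermined additive constant. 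The fluid contribution is handled verbatim as in the two-phase flow analysis, and the two interface integrals are split using the same algebraic identity $ac+bd=\tfrac{1}{2}[(a-b)(c-d)+(a+b)(c+d)]$ employed in~\eqref{estimate4twophase}.
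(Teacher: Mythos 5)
Your proposal is correct and follows essentially the same route as the paper: the paper's own proof consists precisely of the energy derivation culminating in~\eqref{error_FSI2}, the Gr\"onwall-plus-time-integration step, and the substitution of Assumption~\ref{assumption:quad} to convert each residual integral into a discrete loss term plus a quadrature error, exactly as you describe, including the two-bracket separation between the unweighted initial-condition terms and the $T^{1/2}$-weighted interior/boundary/interface terms. You also correctly pinpoint the one genuinely new ingredient relative to the wave-type case --- inserting $\partial_t\be_s^{\bu}=\be_s^{\bv}+\mathcal{R}_s^{\bu}$ to recover the elastic-energy time derivative, which produces the cross term and forces the extra $L^2$ residual terms $\int_{\Omega_s}|\mathcal{R}_s^{\mathcal{I},\bu}|^2\,\mathrm{d}\bx$ and $\int_{\Omega_s}|\mathcal{R}_s^{\bu}|^2\,\mathrm{d}\bx$ --- which is the same mechanism the paper uses.
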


\begin{remark}
Similarly, from the derivation above, one can see that two extra terms, i.e., the third and the eighth integral term on the right-hand side of \eqref{LS4FSI1} that we added to our DNN/meshfree method for solving the FSI problem with a parabolic-like structural equation also come from the time integration and integration by parts. We also use those two terms in our implementation to verify the theoretical results we developed to solve the FSI problem with a parabolic-like structural equation.
\end{remark}

\section{Numerical experiments} \label{sec:numerics}
In this section, we conduct numerical experiments for the two-phase flow interface problem and the FSI problem in two forms. The purpose is to demonstrate the effectiveness and the capacity of the proposed DNN/meshfree method and illustrate the theoretical convergence results obtained in the previous section.

All three numerical examples are implemented using DeepXDE~\cite{lu2021deepxde} which is developed based on the methodology of PINN~\cite{karniadakis2021physics} and the platform of TensorFlow~\cite{tensorflow2015-whitepaper} and PyTorch~\cite{NEURIPS2019_9015}. The fully connected DNN structures used in our numerical experiments, otherwise specified, have~$3$ hidden layers and~$50$ neurons in each hidden layer, and~$\tanh$ function is used as the activation function on each layer. As mentioned before, ADAM, a variant of the SGD method, is applied as the optimizer with an initial learning rate~$0.001$ and~$50000$ epochs performed to obtain numerical solutions of the developed DNN/meshfree method. The way to choose prescribed weight coefficients~$\omega_{\mathcal{L}_i}$, $\omega_{\Gamma}$, $\omega_{\mathcal{B}_i}$, and~$\omega_{\mathcal{I}_i}$ in the space-time LS formulation $\mathcal{R}(\bm{\tilde{u}}_1, \bm{\tilde{u}}_2)$ is referred to Remark \ref{rmk:weights}.

\subsection{Example 1: The two-phase flow interface problem}\label{num:example1}
We first consider the two-phase flow interface problem, i.e., the example introduced in Section~\ref{sec:NS-NS-example}.  Here, we choose $\Omega := [0,3] \times [0,3]$, $\Omega_2 := \{(x,y) \in \mathbb{R}^2 \ | \ (x-1.5)^2 + (y-1.5)^2 < 1 \}$ and $\Omega_1=\Omega\backslash\overline{\Omega_2}$ as the computational domain, and $[0,1]$ as the time interval, which implies that the interface $\Gamma$ is just a circle defined as $\Gamma:=\{(x,y) \in \mathbb{R}^2 \ | \ (x-1.5)^2 + (y-1.5)^2 = 1 \}$. We appropriately choose $\bm{f}_i,\ \bg_i,\ \bv_i^b,\ \bv_i^0$, $i=1,2$, to make sure the following function $(\bm{v}_i,\ p_i)$, $i=1,2$, are the exact solution to this example:
\begin{align*}
&\bm{v}_1 =
\begin{pmatrix}
e^t \, \sin x \, \cos y \\
e^t \, \cos x \, \sin y
\end{pmatrix}, \ \quad\quad
p_1 =   e^t \, \sin x \, \sin y, \\
&\bm{v}_2 =
\begin{pmatrix}
    \cos t \, \cos x \, \cos y \\
    \cos t \, \sin x \, \sin y
\end{pmatrix}, \quad
p_2 =   \cos t \, \cos(x+y),
\end{align*}
which induces $\bg_i\neq 0,\ i=1,2$, leading to the jump-type interface conditions across $\Gamma$. Furthermore, in our numerical experiments, we choose different densities~$\bm{\rho}_i\ (i=1,2)$ and dynamic viscosities $\bm{\mu}_i\ (i=1,2)$ for the fluid, which presents two different fluid phases in this example to investigate numerical performances of the developed DNN/meshfree method for a two-phase flow interface problem with high-contrast coefficients and jump-type interface conditions.

\begin{table}[h!]
\centering \caption{The two-phase flow interface problem:
$\rho_1=\rho_2 = 1$ and $\mu_1 = \mu_2 = 1$} \label{tab:NSNS-1vs1}
\begin{tabular}{ c c c c c c}
\hline \hline
$M_{\mathcal{L}_i}$ & $M_{\mathcal{B}_i}$ & $M_{\Gamma}$&  $M_{\mathcal{I}_i}$  &  Approx. Error & Loss Error \\ \hline
$10 \times 10 \times 5$ & $4 \times 4 \times 5$ & $4 \times 5$  & $4 \times 4$  & 7.276e-02  & 9.025e-06 \\
$10 \times 10 \times 5$ & $8 \times 4 \times 5$ & $8 \times 5$  & $8 \times 8$  & 5.875e-02  & 1.425e-05 \\
$10 \times 10 \times 5$ & $16 \times 4 \times 5$ & $16 \times 5$  & $16 \times 16$  & 3.673e-02  & 1.115e-05 \\
$10 \times 10 \times 5$ & $32 \times 4 \times 5$ & $32 \times 5$  & $32 \times 32$  & 5.577e-02  & 1.316e-05 \\ \hline
$20 \times 20 \times 10$ & $4 \times 4 \times 10$ & $4 \times 10$  & $4 \times 4$  & 2.766e-02  & 1.239e-05 \\
$20 \times 20 \times 10$ & $8 \times 4 \times 10$ & $8 \times 10$  & $8 \times 8$  & 2.524e-02  & 1.602e-05 \\
$20 \times 20 \times 10$ & $16 \times 4 \times 10$ & $16 \times 10$  & $16 \times 16$  & 4.358e-02  & 1.361e-05 \\
$20 \times 20 \times 10$ & $32 \times 4 \times 10$ & $32 \times 10$  & $32 \times 32$  & 3.151e-02  & 1.493e-05 \\ \hline
$40 \times 40 \times 20$ & $4 \times 4 \times 20$ & $4 \times 20$  & $4 \times 4$  & 3.384e-02  & 1.160e-05 \\
$40 \times 40 \times 20$ & $8 \times 4 \times 20$ & $8 \times 20$  & $8 \times 8$  & 1.574e-02  & 1.358e-05 \\
$40 \times 40 \times 20$ & $16 \times 4 \times 20$ & $16 \times 20$  & $16 \times 16$  & 2.514e-02  & 1.507e-05 \\
$40 \times 40 \times 20$ & $32 \times 4 \times 20$ & $32 \times 20$  & $32 \times 32$  & 2.355e-02  & 1.138e-05 \\
\hline \hline
\end{tabular}
\end{table}

\begin{figure}[h!]
\centering	
\includegraphics[width=.7\textwidth]{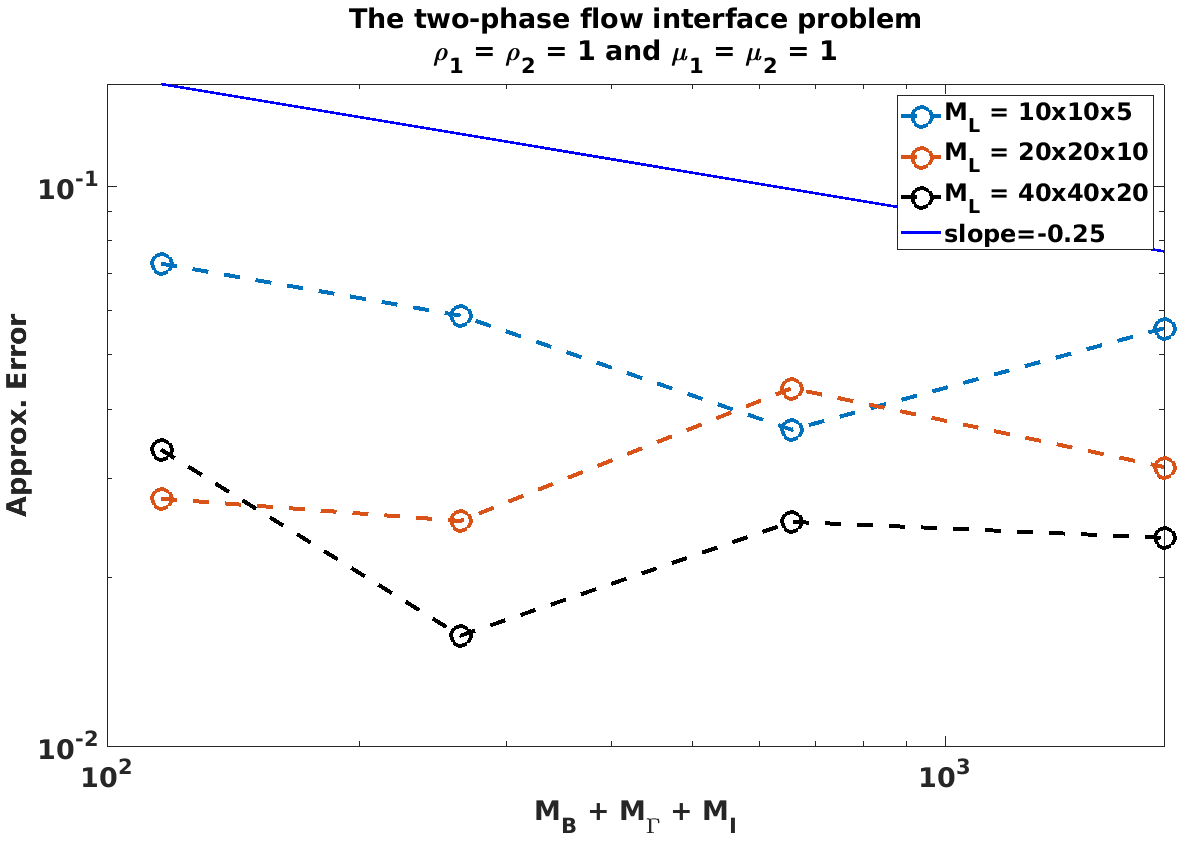}
\caption{Approximation errors of the two-phase flow interface problem
	($\rho_1=\rho_2 = 1$ and $\mu_1 = \mu_2 = 1$).} \label{fig:table1}
\end{figure}

\begin{figure}[h!]
    \centering
\includegraphics[width=.45\textwidth]{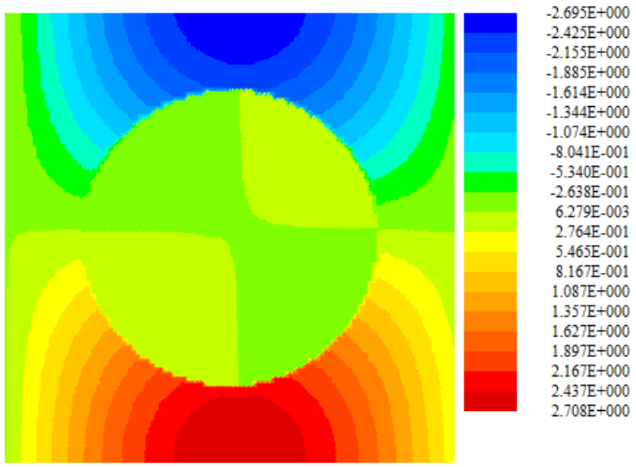}
\includegraphics[width=.45\textwidth]{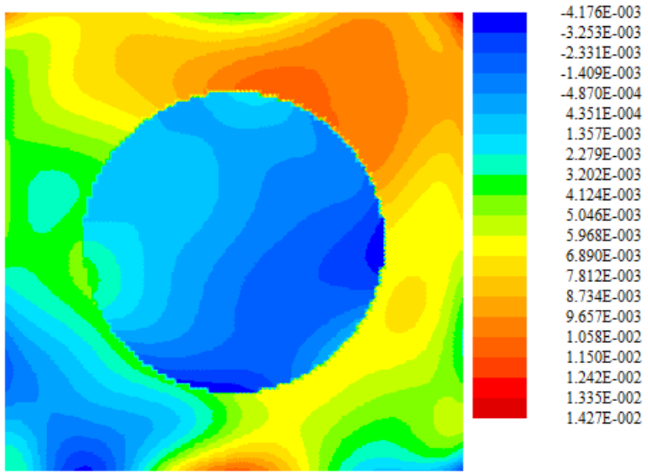} \\
\includegraphics[width=.45\textwidth]{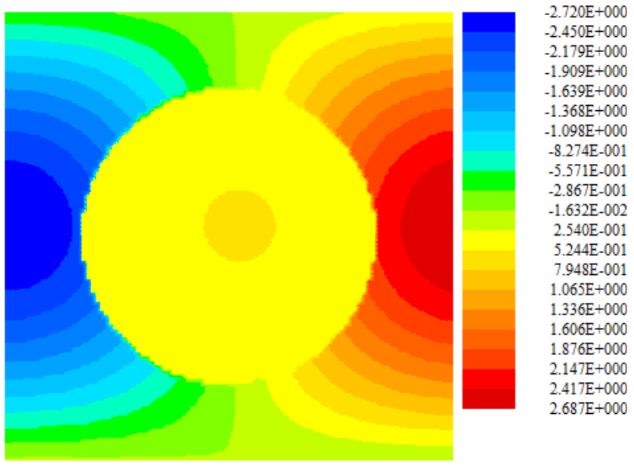}
\includegraphics[width=.45\textwidth]{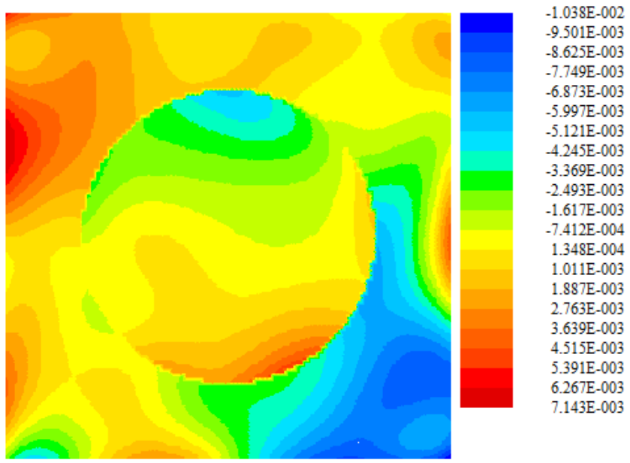} \\
\includegraphics[width=.45\textwidth]{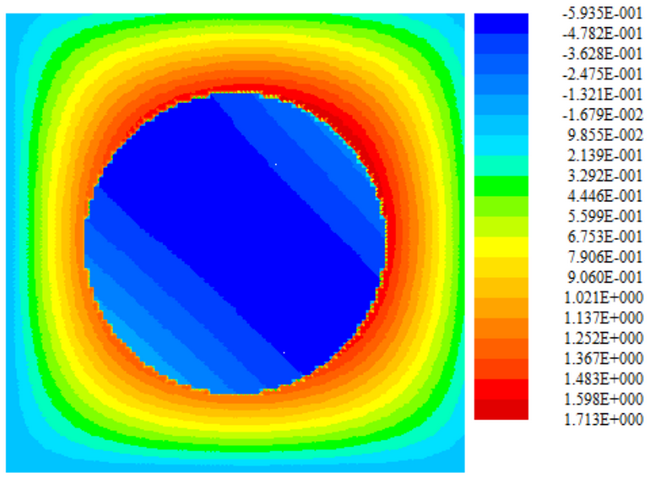}
\includegraphics[width=.45\textwidth]{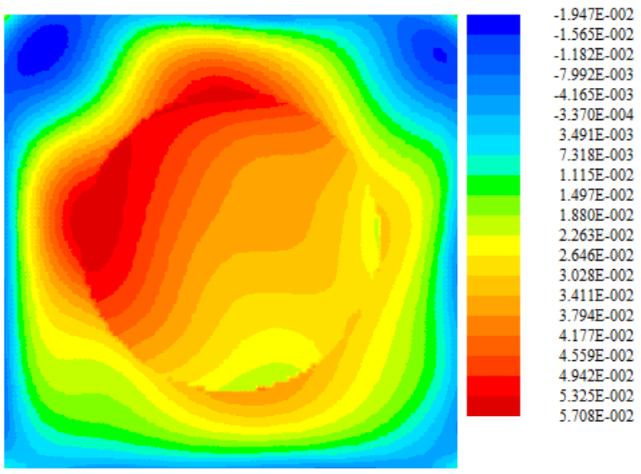}
\caption{Numerical solutions of the two-phase flow interface problem
($\rho_1=\rho_2 = 1$ and $\mu_1 = \mu_2 = 1$). Top left: the
horizontal velocity; Top right: error of the horizontal velocity;
Middle left: the vertical velocity;  Middle right: error of the
vertical velocity; Bottom left: the pressure; Bottom right: error of
the pressure.} \label{fig:NSNS-1vs1}
\end{figure}

Numerical error results are presented in Table~\ref{tab:NSNS-1vs1} where we set the physical parameters to be $\rho_1=\rho_2 = 1$ and~$\mu_1 = \mu_2 = 1$.  From the last two columns, we can see that errors of the loss function are much smaller than numerical approximation errors, which means we are at the region where the optimization problem is solved fairly accurately, and the quadrature error dominates the error. In this case, our error estimation derived in Section~\ref{sec:error} mainly serves as a prior error estimation. Furthermore, we expect the quadrature errors coming from the interface, boundary, and initial condition terms to play a more important role than the quadrature errors of the interior space-time domain terms because dimensions of the interface, the boundary, and the initial subdomains are lower than that of the interior space-time subdomains. In our experiments, as shown in Table~\ref{tab:NSNS-1vs1} and Figure~\ref{fig:table1}, when we keep the number of interior sampling point $M_{\mathcal{L}_i}$ to be a constant and increase the number of sampling points on the interface, $M_{\Gamma}$, on the boundaries, $M_{\mathcal{B}_i}$, and in the initial subdomains, $M_{\mathcal{I}_i}$, then the overall approximation errors decrease first as expected but then stagnate or even increase, which may indicate that the quadrature errors of the interior domain terms take over the dominance again when the number of sampling points on the interface/boundaries and in the initial subdomains are more than enough. In Figure~\ref{fig:table1}, we also plot a reference line with the absolute value of the slope equaling $0.25$, which should be the expected convergence order since using sampling points is equivalent to using a Monte-Carlo type numerical quadrature rule. When approximation errors decrease, we notice that they roughly follow the expected convergence order. Those observations are consistent with our theoretical result.

\begin{table}[h!]
    \centering
    \caption{The two-phase flow interface problem: $\rho_1=1$, $\rho_2 = 10$, $\mu_1 = 1$, and $\mu_2 = 10$} \label{tab:NSNS-1vs10}
    \begin{tabular}{ c c c c c c}
        \hline \hline
        $M_{\mathcal{L}_i}$ & $M_{\mathcal{B}_i}$ & $M_{\Gamma}$&  $M_{\mathcal{I}_i}$  &  Approx. Error & Loss Error \\ \hline
        $10 \times 10 \times 5$ & $4 \times 4 \times 5$ & $4 \times 5$  & $4 \times 4$  & 1.577e-01  & 1.312e-05 \\
        $10 \times 10 \times 5$ & $8 \times 4 \times 5$ & $8 \times 5$  & $8 \times 8$  & 7.793e-02  & 1.399e-05 \\
        $10 \times 10 \times 5$ & $16 \times 4 \times 5$ & $16 \times 5$  & $16 \times 16$  & 6.548e-02  & 1.166e-05 \\
        $10 \times 10 \times 5$ & $32 \times 4 \times 5$ & $32 \times 5$  & $32 \times 32$  & 8.312e-02  & 1.387e-05 \\ \hline
        $20 \times 20 \times 10$ & $4 \times 4 \times 10$ & $4 \times 10$  & $4 \times 4$  & 1.181e-01  & 1.120e-05 \\
        $20 \times 20 \times 10$ & $8 \times 4 \times 10$ & $8 \times 10$  & $8 \times 8$  & 7.081e-02  & 1.422e-05 \\
        $20 \times 20 \times 10$ & $16 \times 4 \times 10$ & $16 \times 10$  & $16 \times 16$  & 5.197e-02  & 1.065e-05 \\
        $20 \times 20 \times 10$ & $32 \times 4 \times 10$ & $32 \times 10$  & $32 \times 32$  & 7.916e-02  & 1.919e-05 \\ \hline
        $40 \times 40 \times 20$ & $4 \times 4 \times 20$ & $4 \times 20$  & $4 \times 4$  & 8.072e-02  & 1.161e-05 \\
        $40 \times 40 \times 20$ & $8 \times 4 \times 20$ & $8 \times 20$  & $8 \times 8$  & 4.400e-02  & 1.301e-05 \\
        $40 \times 40 \times 20$ & $16 \times 4 \times 20$ & $16 \times 20$  & $16 \times 16$  & 4.776e-02  & 1.199e-05 \\
        $40 \times 40 \times 20$ & $32 \times 4 \times 20$ & $32 \times 20$  & $32 \times 32$  & 4.426e-02  & 1.231e-05 \\
        \hline \hline
    \end{tabular}
\end{table}

\begin{table}[h!]
    \centering
    \caption{The two-phase flow interface problem: $\rho_1=1$, $\rho_2 = 100$, $\mu_1 = 1$, and $\mu_2 = 100$} \label{tab:NSNS-1vs100}
    \begin{tabular}{ c c c c c c}
        \hline \hline
        $M_{\mathcal{L}_i}$ & $M_{\mathcal{B}_i}$ & $M_{\Gamma}$&  $M_{\mathcal{I}_i}$  &  Approx. Error & Loss Error \\ \hline
        $10 \times 10 \times 5$ & $4 \times 4 \times 5$ & $4 \times 5$  & $4 \times 4$  & 4.662e-01  & 9.318e-05 \\
        $10 \times 10 \times 5$ & $8 \times 4 \times 5$ & $8 \times 5$  & $8 \times 8$  & 1.599e-02  & 1.131e-05 \\
        $10 \times 10 \times 5$ & $16 \times 4 \times 5$ & $16 \times 5$  & $16 \times 16$  & 2.971e-02  & 1.274e-05 \\
        $10 \times 10 \times 5$ & $32 \times 4 \times 5$ & $32 \times 5$  & $32 \times 32$  & 2.082e-02  & 1.350e-05 \\ \hline
        $20 \times 20 \times 10$ & $4 \times 4 \times 10$ & $4 \times 10$  & $4 \times 4$  & 3.704e-01  & 1.211e-05 \\
        $20 \times 20 \times 10$ & $8 \times 4 \times 10$ & $8 \times 10$  & $8 \times 8$  & 3.651e-02  & 1.347e-05 \\
        $20 \times 20 \times 10$ & $16 \times 4 \times 10$ & $16 \times 10$  & $16 \times 16$  & 8.521e-02  & 1.615e-05 \\
        $20 \times 20 \times 10$ & $32 \times 4 \times 10$ & $32 \times 10$  & $32 \times 32$  & 2.236e-01  & 1.457e-05 \\ \hline
        $40 \times 40 \times 20$ & $4 \times 4 \times 20$ & $4 \times 20$  & $4 \times 4$  & 3.933e-01  & 1.392e-05 \\
        $40 \times 40 \times 20$ & $8 \times 4 \times 20$ & $8 \times 20$  & $8 \times 8$  & 3.455e-01  & 1.466e-05 \\
        $40 \times 40 \times 20$ & $16 \times 4 \times 20$ & $16 \times 20$  & $16 \times 16$  & 1.537e-01  & 1.437e-05 \\
        $40 \times 40 \times 20$ & $32 \times 4 \times 20$ & $32 \times 20$  & $32 \times 32$  & 1.010e-02  & 1.711e-05 \\
        \hline \hline
    \end{tabular}
\end{table}

\begin{table}[h!]
    \centering
    \caption{The two-phase flow interface problem: $\rho_1=1$, $\rho_2 = 1000$, $\mu_1 = 1$, and $\mu_2 = 1000$} \label{tab:NSNS-1vs1000}
    \begin{tabular}{ c c c c c c}
        \hline \hline
        $M_{\mathcal{L}_i}$ & $M_{\mathcal{B}_i}$ & $M_{\Gamma}$&  $M_{\mathcal{I}_i}$  &  Approx. Error & Loss Error \\ \hline
        $10 \times 10 \times 5$ & $4 \times 4 \times 5$ & $4 \times 5$  & $4 \times 4$  & 3.606e00  & 1.068e-05 \\
        $10 \times 10 \times 5$ & $8 \times 4 \times 5$ & $8 \times 5$  & $8 \times 8$  & 1.445e00  & 1.261e-05 \\
        $10 \times 10 \times 5$ & $16 \times 4 \times 5$ & $16 \times 5$  & $16 \times 16$  & 1.525e00  & 1.125e-05 \\
        $10 \times 10 \times 5$ & $32 \times 4 \times 5$ & $32 \times 5$  & $32 \times 32$  & 1.248e00 & 1.357e-05 \\ \hline
        $20 \times 20 \times 10$ & $4 \times 4 \times 10$ & $4 \times 10$  & $4 \times 4$  & 3.461e00  & 1.250e-05 \\
        $20 \times 20 \times 10$ & $8 \times 4 \times 10$ & $8 \times 10$  & $8 \times 8$  & 1.327e00  & 1.338e-05 \\
        $20 \times 20 \times 10$ & $16 \times 4 \times 10$ & $16 \times 10$  & $16 \times 16$  & 6.171e-01  & 1.338e-05 \\
        $20 \times 20 \times 10$ & $32 \times 4 \times 10$ & $32 \times 10$  & $32 \times 32$  & 9.599e-01  & 1.434e-05 \\ \hline
        $40 \times 40 \times 20$ & $4 \times 4 \times 20$ & $4 \times 20$  & $4 \times 4$  & 1.976e00  & 1.038e-05 \\
        $40 \times 40 \times 20$ & $8 \times 4 \times 20$ & $8 \times 20$  & $8 \times 8$  & 1.615e00  & 1.349e-05 \\
        $40 \times 40 \times 20$ & $16 \times 4 \times 20$ & $16 \times 20$  & $16 \times 16$  & 8.797e-01  & 1.631e-05 \\
        $40 \times 40 \times 20$ & $32 \times 4 \times 20$ & $32 \times 20$  & $32 \times 32$  & 2.320e-01  & 1.409e-05 \\
        \hline \hline
    \end{tabular}
\end{table}

\begin{figure}[h!]
    \centering
    \includegraphics[width=.45\textwidth]{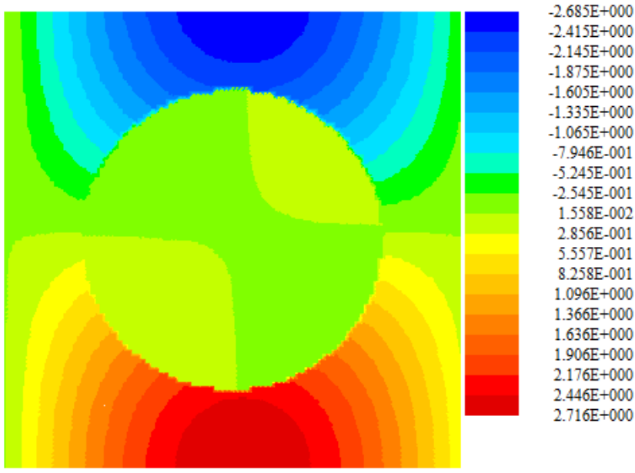}
    \includegraphics[width=.45\textwidth]{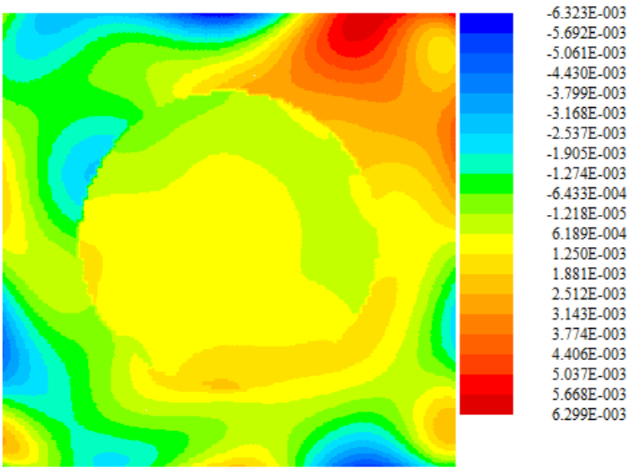} \\
    \includegraphics[width=.45\textwidth]{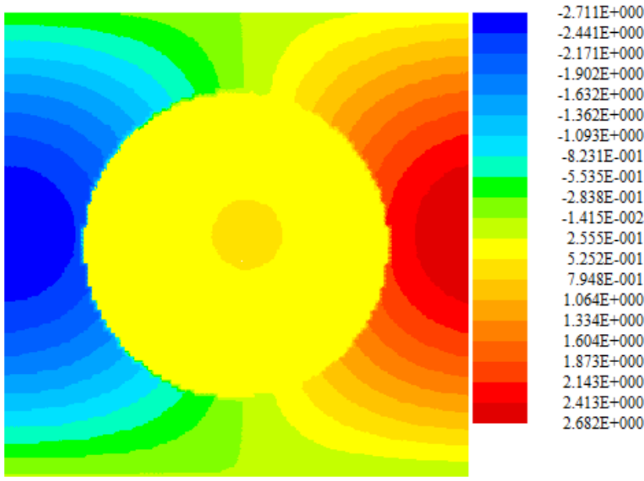}
    \includegraphics[width=.45\textwidth]{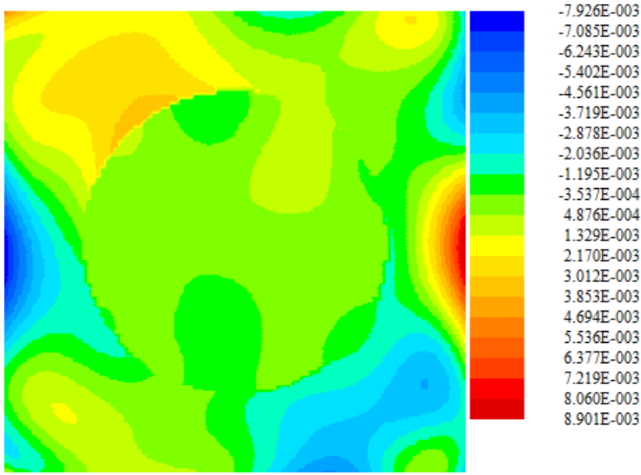} \\
    \includegraphics[width=.45\textwidth]{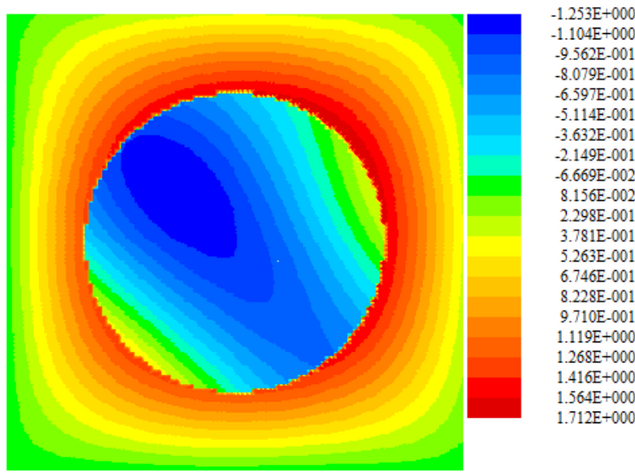}
    \includegraphics[width=.45\textwidth]{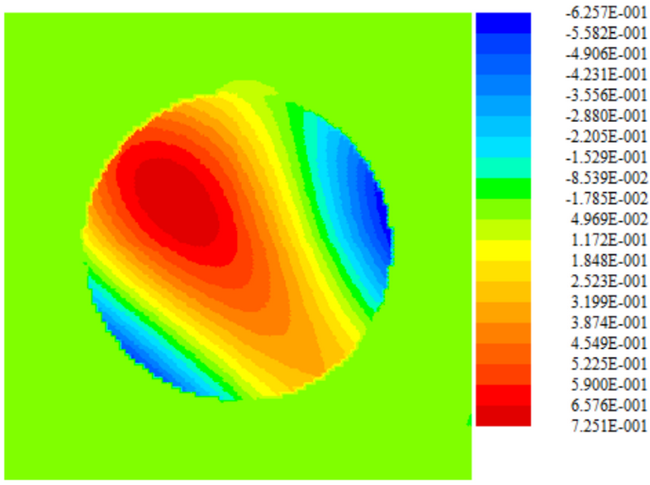}
\caption{Numerical solutions of the two-phase flow interface problem
($\rho_1=1$, $\rho_2 = 1000$, $\mu_1 = 1$, and $\mu_2 = 1000$). Top
left: the horizontal velocity; Top right: error of the horizontal
velocity; Middle left: the vertical velocity;  Middle right: error
of the vertical velocity; Bottom left: the pressure; Bottom right:
error of the pressure.} \label{fig:NSNS-1vs1000}
\end{figure}

In Tables~\ref{tab:NSNS-1vs10}, \ref{tab:NSNS-1vs100}, and~\ref{tab:NSNS-1vs1000}, we report numerical results along with increasing jumps of $\rho_1$ and $\rho_2$, and of $\mu_1$ and $\mu_2$, i.e., we gradually increase the value of physical parameters $\rho_2 = 10, 100, 1000$ and $\mu_2 = 10, 100, 1000$, respectively, while keeping~$\rho_1=\mu_1=1$, to represent a series of high-contrast coefficients. The last column of those tables shows that we solve the optimization fairly accurately, so we can ignore the error caused by the optimization problem. We still see a similar convergence behavior comparing with the case $\rho_1=\rho_2=1$ and $\mu_1=\mu_2 =1$, i.e., the approximation error decreases first when we increase $M_{\mathcal{B}_i}$, $M_{\Gamma}$, and~$M_{\mathcal{I}_i}$ while keeping $M_{\mathcal{L}_i}$ the same, and then it might stagnate or even increase, which implies that the quadrature error of the interior domain terms becomes dominating again when $M_{\mathcal{B}_i}$, $M_{\Gamma}$, and~$M_{\mathcal{I}_i}$ are large enough. In addition, we also observe that the approximation errors get worse when the ratios~$\rho_2/\rho_1$ and $\mu_2/\mu_1$ get larger. This is also expected since, with large jumps, the solution's regularity gets worse, making the interface problem more difficult to be numerically solved accurately. In the recent work~\cite{krishnapriyan2021characterizing}, how the regularity of the solutions affects the optimization problem has been studied for using PINN to solve PDEs. When the regularity worsens, the optimization problem becomes ill-conditioned, affecting the overall performance of the PINN method. As we can see from Figures~\ref{fig:NSNS-1vs1} and~\ref{fig:NSNS-1vs1000}, approximation errors of the velocity mainly stay the same when we increase the ratios~$\rho_2/\rho_1$ and $\mu_2/\mu_1$.  But the pressure's approximation error inside the circle ($\Omega_2$) deteriorates significantly, likely due to a worse approximation of the dynamic interface condition when the jump coefficients turn out higher. In the traditional finite element or finite volume methods, weak formulations are usually formed for the studied interface problems, where the Neumann-type dynamic interface condition is handled naturally through integration by parts. In other words, it can be canceled out, and thus no more dynamic interface condition exists in the weak formulation of the interface problem. Therefore, a better way to treat the dynamic interface condition using DNN/meshfree method may be provided by the weak formulation of the two-phase flow interface problem, especially for the case of large jumps, where there is no dynamic interface condition to be dealt with by the DNN's approximation. Hence, one promising approach to improving the pressure's approximation is to develop a new DNN/meshfree method based on the weak formulation of the two-phase flow interface problem, which is our ongoing work now. But due to the page limitation here, we will report a detailed investigation of this new approach in our future publications. Instead, we adopt a simpler way to improve the pressure's approximation for the developed DNN/meshfree approach in this paper, as shown below.

\begin{table}[h!]
    \centering
    \caption{The two-phase flow interface problem: $\rho_1=1$, $\rho_2 = 1000$, $\mu_1 = 1$, and $\mu_2 = 1000$ with $5$ observation points} \label{tab:NSNS-1vs1000-5p}
    \begin{tabular}{ c c c c c c}
        \hline \hline
        $M_{\mathcal{L}_i}$ & $M_{\mathcal{B}_i}$ & $M_{\Gamma}$&  $M_{\mathcal{I}_i}$  &  Approx. Error & Loss Error \\ \hline
        $10 \times 10 \times 5$ & $4 \times 4 \times 5$ & $4 \times 5$  & $4 \times 4$  & 1.151e-01  & 3.443e-05 \\
        $10 \times 10 \times 5$ & $8 \times 4 \times 5$ & $8 \times 5$  & $8 \times 8$  & 8.764e-02  & 5.086e-05 \\
        $10 \times 10 \times 5$ & $16 \times 4 \times 5$ & $16 \times 5$  & $16 \times 16$  & 1.154e-01  & 5.135e-05 \\
        $10 \times 10 \times 5$ & $32 \times 4 \times 5$ & $32 \times 5$  & $32 \times 32$  & 1.538e-01 & 5.111e-05 \\ \hline
        $20 \times 20 \times 10$ & $4 \times 4 \times 10$ & $4 \times 10$  & $4 \times 4$  & 7.920e-02  & 4.550e-05 \\
        $20 \times 20 \times 10$ & $8 \times 4 \times 10$ & $8 \times 10$  & $8 \times 8$  & 6.730e-02  & 5.554e-05 \\
        $20 \times 20 \times 10$ & $16 \times 4 \times 10$ & $16 \times 10$  & $16 \times 16$  & 7.411e-01  & 5.660e-05 \\
        $20 \times 20 \times 10$ & $32 \times 4 \times 10$ & $32 \times 10$  & $32 \times 32$  & 1.279e-01  & 6.769e-05 \\ \hline
        $40 \times 40 \times 20$ & $4 \times 4 \times 20$ & $4 \times 20$  & $4 \times 4$  & 7.871e-02  & 5.016e-05 \\
        $40 \times 40 \times 20$ & $8 \times 4 \times 20$ & $8 \times 20$  & $8 \times 8$  & 6.549e00  & 6.072e-05 \\
        $40 \times 40 \times 20$ & $16 \times 4 \times 20$ & $16 \times 20$  & $16 \times 16$  & 1.225e-01  & 4.495e-05 \\
        $40 \times 40 \times 20$ & $32 \times 4 \times 20$ & $32 \times 20$  & $32 \times 32$  & 8.506e-02  & 3.833e-05 \\
        \hline \hline
    \end{tabular}
\end{table}

\begin{figure}[h!]
    \centering
    \includegraphics[width=.45\textwidth]{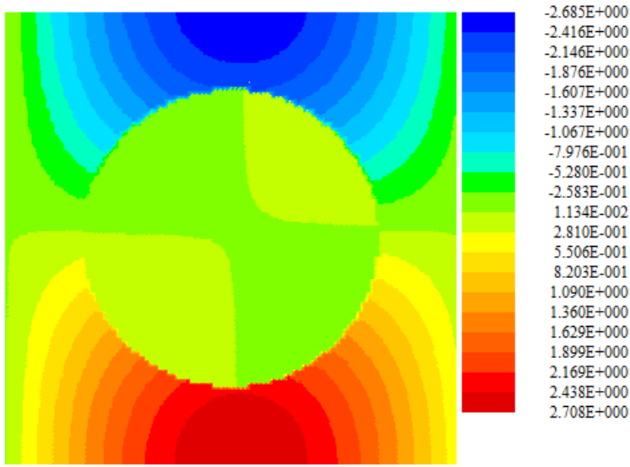}
    \includegraphics[width=.45\textwidth]{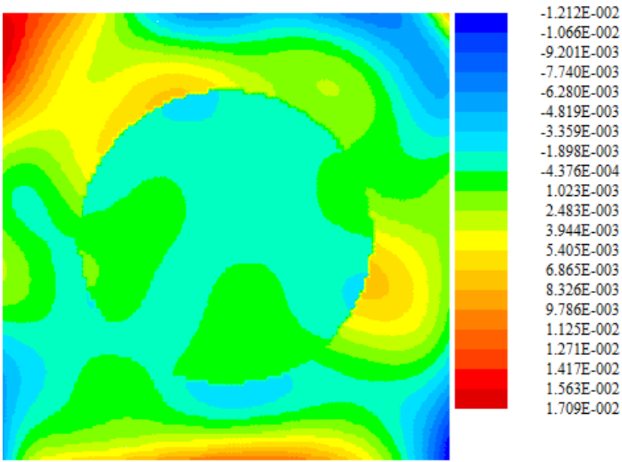} \\
    \includegraphics[width=.45\textwidth]{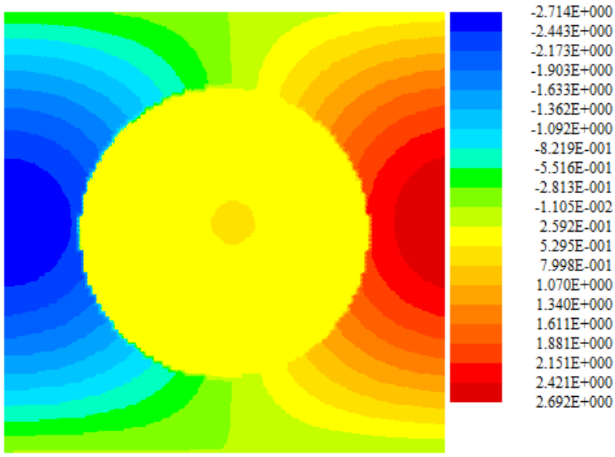}
    \includegraphics[width=.45\textwidth]{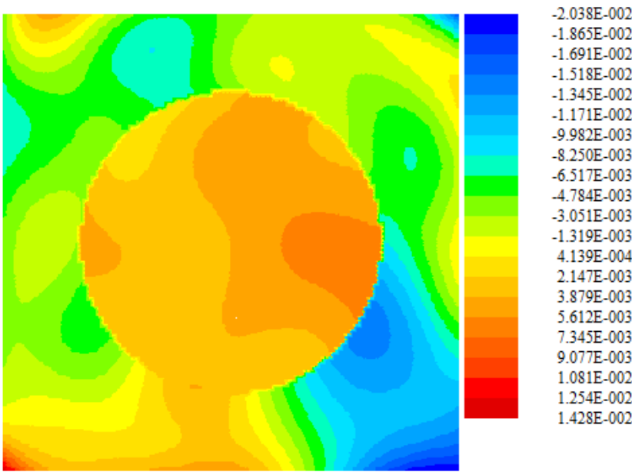} \\
    \includegraphics[width=.45\textwidth]{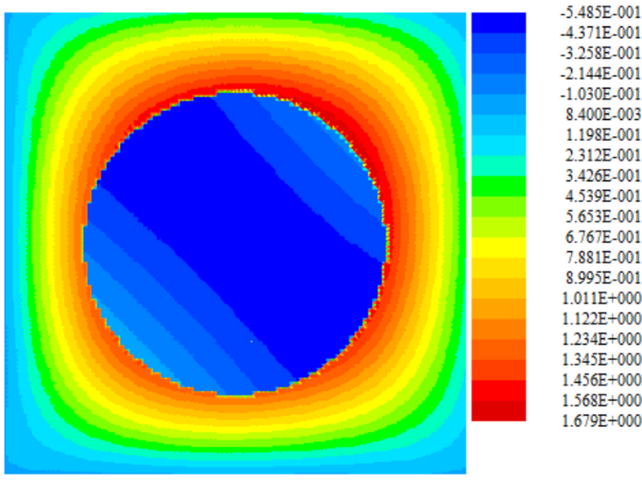}
    \includegraphics[width=.45\textwidth]{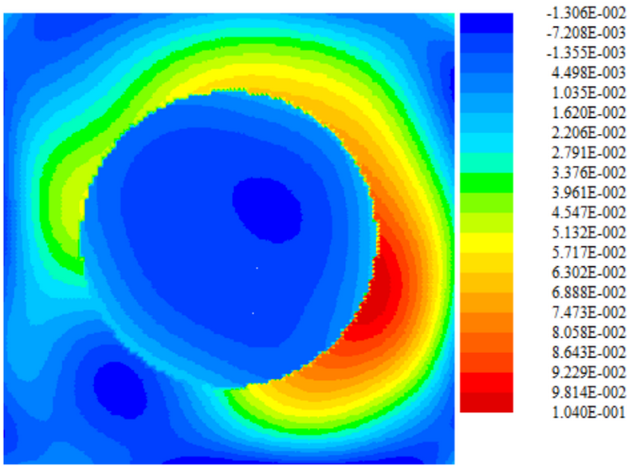}
\caption{Numerical solutions of the two-phase flow interface problem
($\rho_1=1$, $\rho_2 = 1000$, $\mu_1 = 1$, and $\mu_2 = 1000$ with $5$
observation points. Top left: the horizontal velocity; Top right:
error of the horizontal velocity; Middle left: the vertical
velocity;  Middle right: error of the vertical velocity; Bottom
left: the pressure; Bottom right: error of the pressure.}
\label{fig:NSNS-1vs1000-5p}
\end{figure}

\begin{figure}[h!]
	\centering
	\includegraphics[width=.33\textwidth]{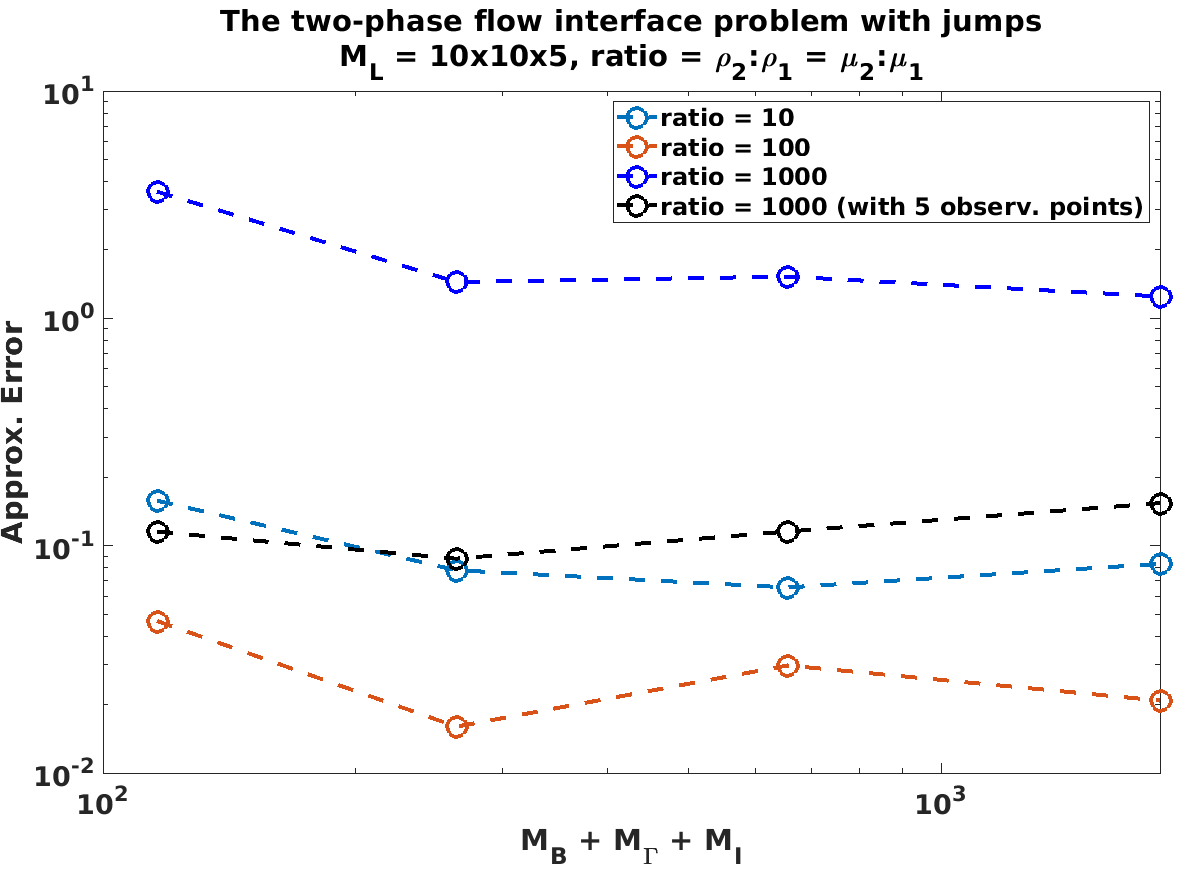} 
	\includegraphics[width=.33\textwidth]{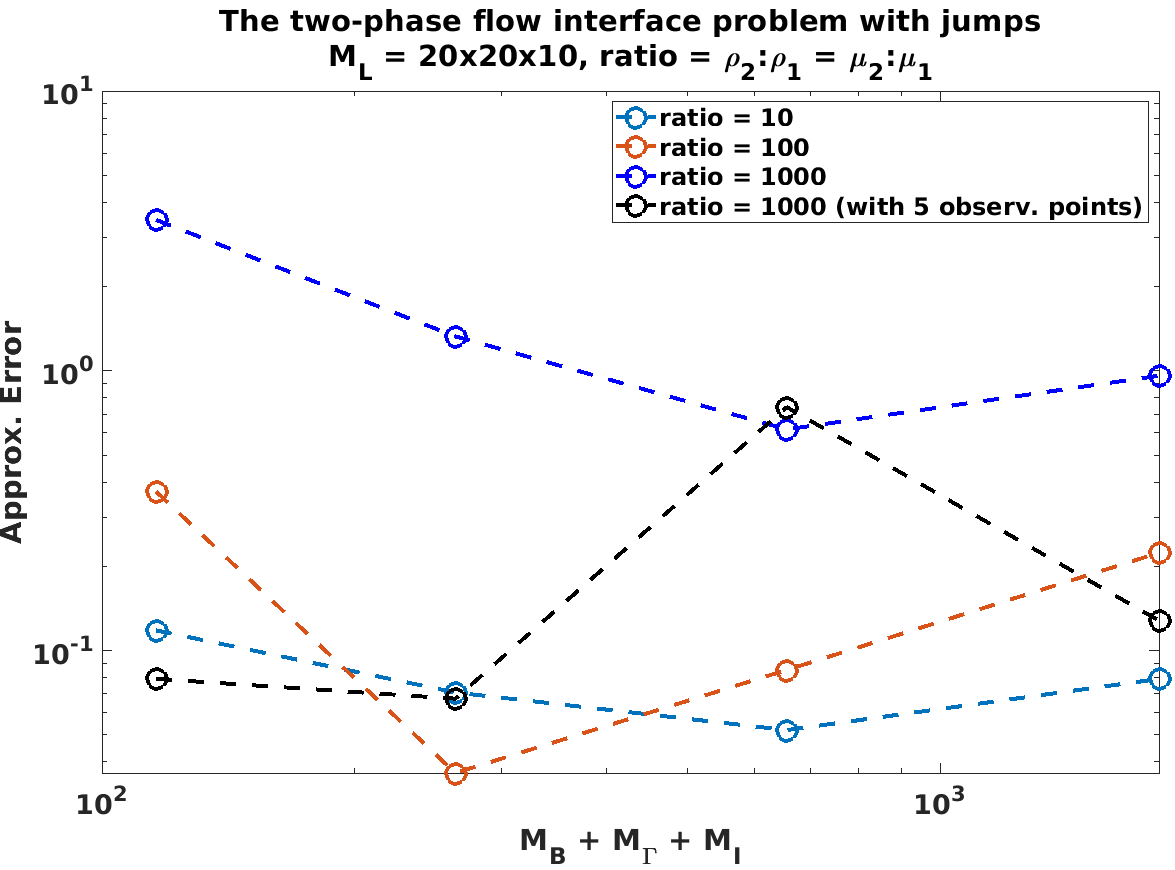} 
	\includegraphics[width=.33\textwidth]{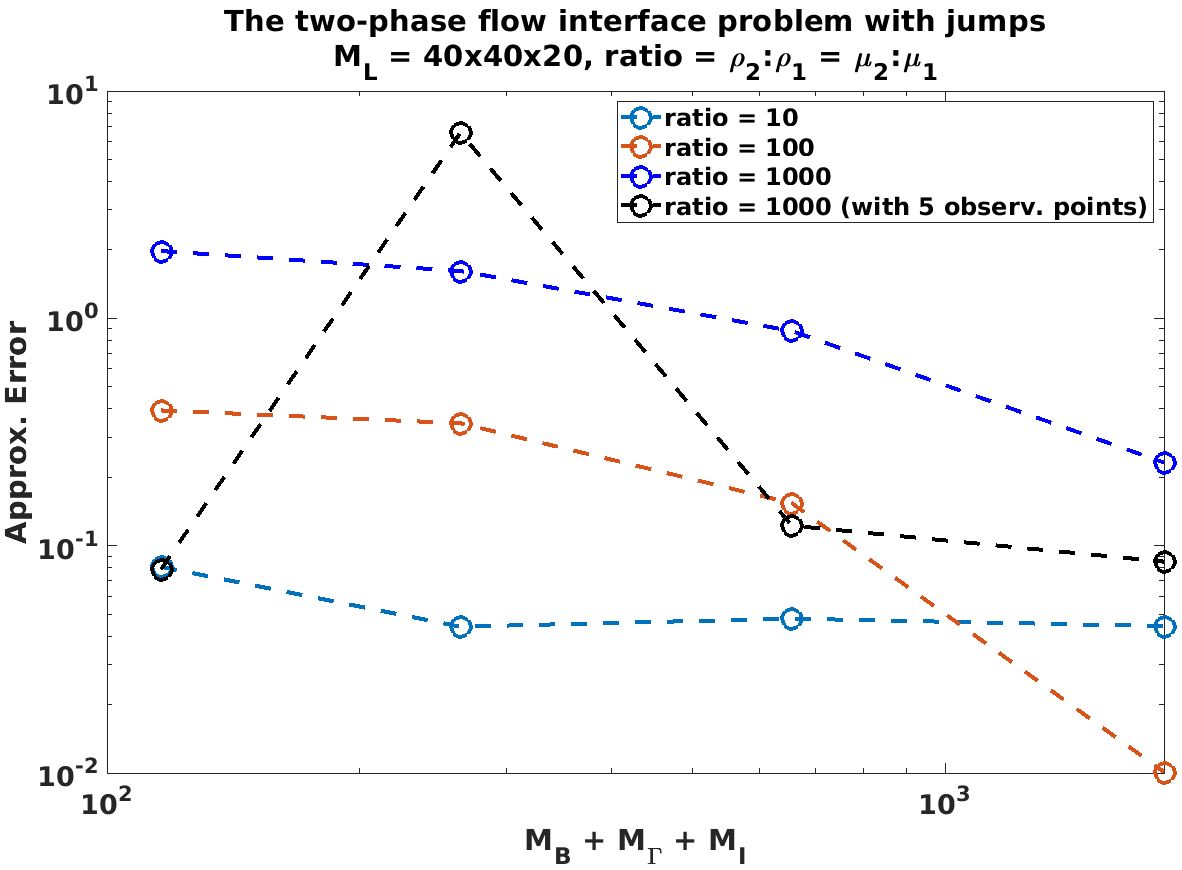}
	\caption{Approximations to the two-phase flow interface problems on different sampling points sets. Left: $M_{\mathcal{L}_i} = 10 \times 10 \times 5$; Middle: $M_{\mathcal{L}_i} = 20 \times 20 \times 10$; Right: $M_{\mathcal{L}_i} = 40 \times 40 \times 20$.} \label{fig:two-phase-flow-jump}
\end{figure}

In~\cite{krishnapriyan2021characterizing}, the authors suggest some possible approaches to make the PINN method more robust. Here, we use a simple approach suggested in~\cite{karniadakis2021physics}, i.e., adopt a few observation points in our experiments. In particular, we add $5$ observation points to $\Omega_2$ that are symmetrically located at~$(1.5+0.5\cos45^\circ,1.5+0.5\sin45^\circ)$, $(1.5-0.5\cos45^\circ,1.5+0.5\sin45^\circ)$, $(1.5-0.5\cos45^\circ,1.5-0.5\sin45^\circ)$, $(1.5+0.5\cos45^\circ,1.5-0.5\sin45^\circ)$, and~$(1.5,1.5)$, considering that the approximation error of the pressure gets worse in $\Omega_2$. In other words, we add LS formulations of a pointwise Dirichlet condition of the pressure adhering to those five observation points in $\Omega_2$ to the total loss functional and then train them together with other LS formulations within the same DNN structures. Corresponding numerical results are presented in Table~\ref{tab:NSNS-1vs1000-5p} and Figure~\ref{fig:NSNS-1vs1000-5p}. By adding those five observation points to the case of $\rho_2/\rho_1=\mu_2/\mu_1=1000$, we can significantly reduce the pressure's approximation errors, which overall decreases total approximation errors. In fact, the magnitude of those approximation errors are comparable with the no-jump case of~$\rho_1=\rho_2=1$ and~$\mu_1=\mu_2=1$. Figure~\ref{fig:two-phase-flow-jump} further illustrates the corresponding DNN's approximation trends for four scenarios of jump coefficients based on different sampling points sets, where we clearly observe that the DNN's approximation errors increase along with the increase of the jump coefficients' ratios while decreasing along with the increasing of the sampling points sets' sizes in the interior space-time domain. Additionally, the DNN's approximation errors also closely respond to the increase in the number of sampling points on the interface, boundary, and initial subdomains by showing a decreasing trend, to some extent.

Since the main purpose of this work is to develop and investigate the DNN/meshfree method for solving two-phase flow interface problems in the initial stage, a more detailed study about how to make the method more robust, especially when the solution is singular, is the subject of our ongoing work and will be reported in the future.

\subsection{Example 2: The FSI problems}
Next, we investigate the numerical performance of our developed DNN/meshfree method for solving the FSI problems in two forms, i.e., the example introduced in Section~\ref{sec:FSI-example}. We use the same computational space-time domain as defined in Section~\ref{num:example1}. For the original form of the FSI problem, we properly choose~$\bm{f}_f$, $\bm{f}_s$, $\bm{g}_1$, $\bm{g}_2$, $\bm{v}_f^b$, $\bm{u}_s^b$, $\bm{v}_s^b$, $\bm{v}_f^0$, $\bm{u}_s^0$ and $\bm{v}_s^0$ to guarantee the following functions are exact solutions to both forms of the presented FSI problem,
\begin{align*}
    &\bm{v}_f =
    \begin{pmatrix}
        e^t \, \sin x \, \cos y \\
        -e^t \, \cos x \, \sin y
    \end{pmatrix}, \quad
    p_f =   e^t \, \sin x \, \sin y, \\
    &\bm{u}_s =
    \begin{pmatrix}
        \cos t \, \cos x \, \cos y \\
        \sin t \, \sin x \, \sin y
    \end{pmatrix}.
\end{align*}
In addition, for the FSI problem with a parabolic-like structural equation, beside the above exact solutions, the structural velocity is also needed, given by
\begin{equation*}
    \bm{v}_s =
    \begin{pmatrix}
        -\sin t \, \cos x \, \cos y \\
        \cos t \, \sin x \, \sin y
    \end{pmatrix},
\end{equation*}
which is the time derivative of $\bu_s$. Again, the above exact solution leads to the jump-type kinematic- and dynamic interface conditions with $\bg_i\neq 0,\ i=1,2$. As for the physical parameters, we choose~$\rho_f = 1$, $\mu_f = 1$, $\rho_s=10^3$, Young's modulus~$E=10^6$, and Poisson ratio~$\nu = 0.3$ $\left(\text{thus, } \mu_s=\frac{E}{2(1+\nu)},\ \lambda_s=\frac{E\nu}{(1+\nu)(1-2\nu)}\right)$, as an example to illustrate the performance of the developed DNN/meshfree method, which shows a high-contrast coefficients scenario.

\begin{table}[h!]
    \centering
    \caption{The FSI problem with a wave-type structural equation} \label{tab:FSI-wave}
    \begin{tabular}{ c c c c c c}
        \hline \hline
        $M_{\mathcal{L}_i}$ & $M_{\mathcal{B}_i}$ & $M_{\Gamma}$&  $M_{\mathcal{I}_i}$  &  Approx. Error & Loss Error \\ \hline
        $10 \times 10 \times 5$ & $4 \times 4 \times 5$ & $4 \times 5$  & $4 \times 4$  & 2.009e-02  & 4.162e-05 \\
        $10 \times 10 \times 5$ & $8 \times 4 \times 5$ & $8 \times 5$  & $8 \times 8$  & 1.159e-02  & 7.789e-05 \\
        $10 \times 10 \times 5$ & $16 \times 4 \times 5$ & $16 \times 5$  & $16 \times 16$  & 9.136e-03  & 5.764e-05 \\
        $10 \times 10 \times 5$ & $32 \times 4 \times 5$ & $32 \times 5$  & $32 \times 32$  & 1.187e-02  & 6.383e-05 \\ \hline
        $20 \times 20 \times 10$ & $4 \times 4 \times 10$ & $4 \times 10$  & $4 \times 4$  & 1.347e-02  & 4.427e-05 \\
        $20 \times 20 \times 10$ & $8 \times 4 \times 10$ & $8 \times 10$  & $8 \times 8$  & 1.647e-02  & 5.643e-05 \\
        $20 \times 20 \times 10$ & $16 \times 4 \times 10$ & $16 \times 10$  & $16 \times 16$  & 1.097e-02  & 7.966e-05 \\
        $20 \times 20 \times 10$ & $32 \times 4 \times 10$ & $32 \times 10$  & $32 \times 32$  & 9.871e-02  & 6.993e-05 \\ \hline
        $40 \times 40 \times 20$ & $4 \times 4 \times 20$ & $4 \times 20$  & $4 \times 4$  & 1.611e-02  & 4.970e-05 \\
        $40 \times 40 \times 20$ & $8 \times 4 \times 20$ & $8 \times 20$  & $8 \times 8$  & 1.135e-02  & 6.213e-05 \\
        $40 \times 40 \times 20$ & $16 \times 4 \times 20$ & $16 \times 20$  & $16 \times 16$  & 1.028e-02  & 6.829e-05 \\
        $40 \times 40 \times 20$ & $32 \times 4 \times 20$ & $32 \times 20$  & $32 \times 32$  & 1.079e-02  & 6.812e-05 \\
        \hline \hline
    \end{tabular}
\end{table}

\begin{figure}[h!]
	\centering	
	\includegraphics[width=.7\textwidth]{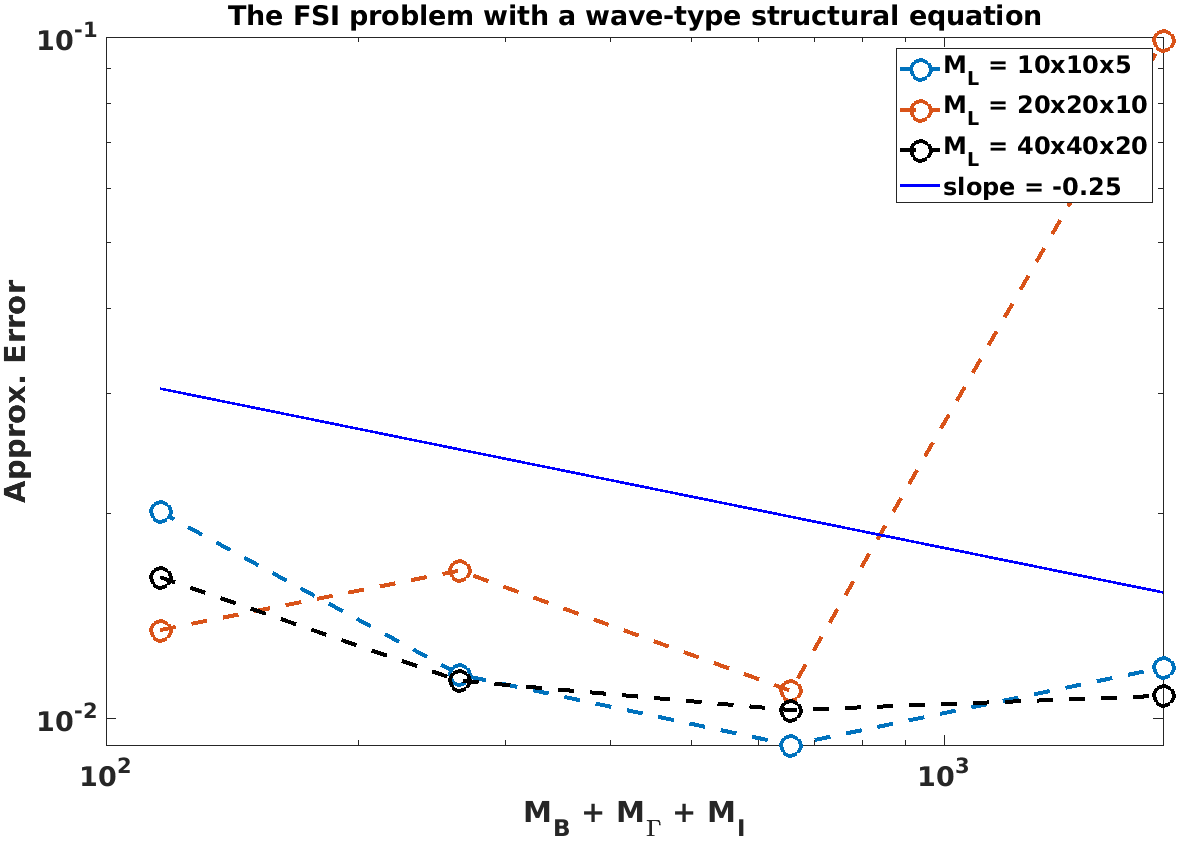}
	\caption{Approximation errors of the FSI problem with a wave-type structural equation.} \label{fig:table6}
\end{figure}

\begin{figure}[h!]
    \centering
    \includegraphics[width=.45\textwidth]{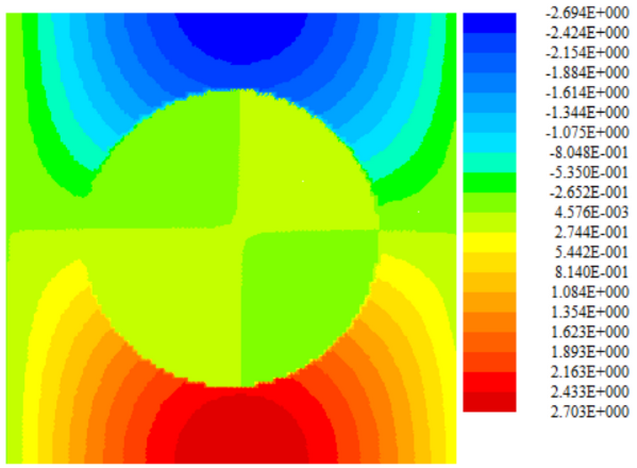}
    \includegraphics[width=.45\textwidth]{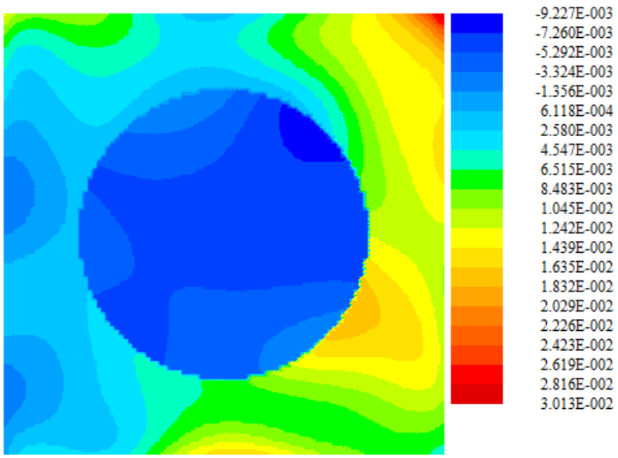} \\
    \includegraphics[width=.45\textwidth]{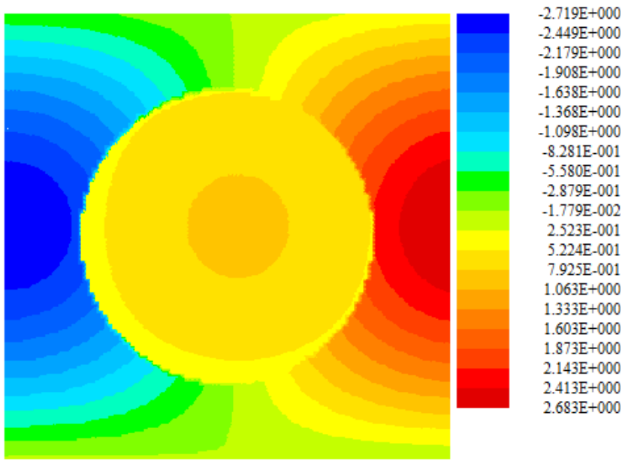}
    \includegraphics[width=.45\textwidth]{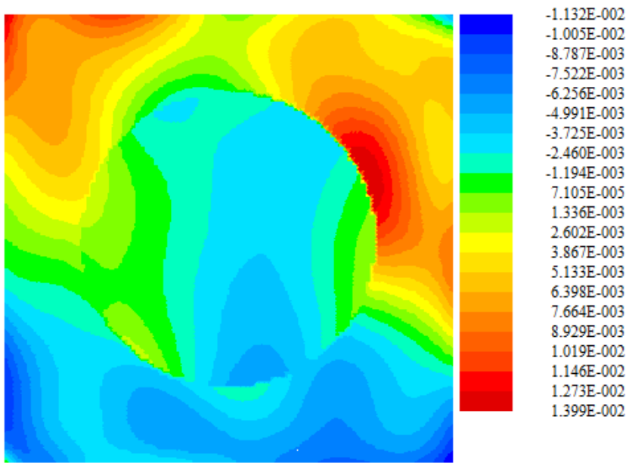} \\
    \includegraphics[width=.45\textwidth]{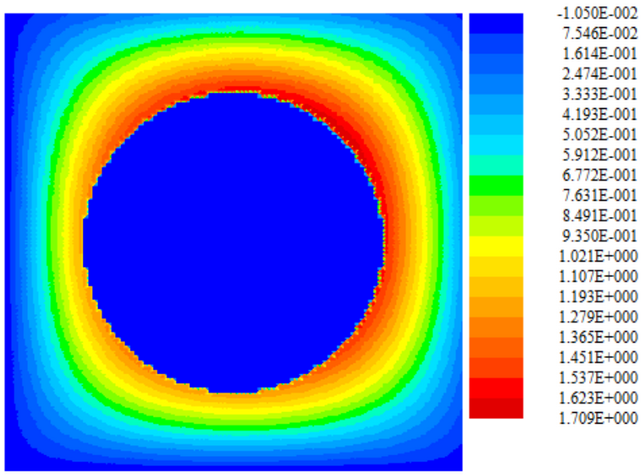}
    \includegraphics[width=.45\textwidth]{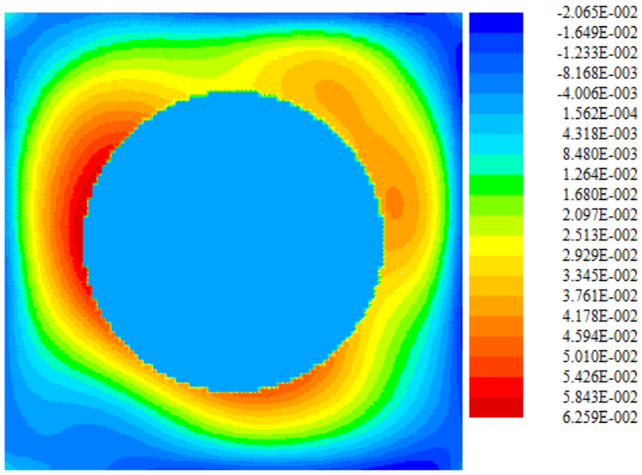}
\caption{Numerical solutions of the FSI problem with a wave-type
structural equation ($\rho_f = 1$, $\mu_f = 1$, $ \rho_s=10^3$,
$E=10^6$, and $\nu = 0.3$). Top left: the horizontal velocity; Top
right: error of the horizontal velocity; Middle left: the vertical
velocity;  Middle right: error of the vertical velocity; Bottom
left: the pressure; Bottom right: error of the pressure.}
\label{fig:FSI-wave}
\end{figure}

\begin{table}[h!]
    \centering
    \caption{The FSI problem with a parabolic-like structural equation} \label{tab:FSI-parabolic}
    \begin{tabular}{ c c c c c c}
        \hline \hline
        $M_{\mathcal{L}_i}$ & $M_{\mathcal{B}_i}$ & $M_{\Gamma}$&  $M_{\mathcal{I}_i}$  &  Approx. Error & Loss Error \\ \hline
        $10 \times 10 \times 5$ & $4 \times 4 \times 5$ & $4 \times 5$  & $4 \times 4$  & 1.893e-02  & 4.786e-05 \\
        $10 \times 10 \times 5$ & $8 \times 4 \times 5$ & $8 \times 5$  & $8 \times 8$  & 1.453e-02  & 8.117e-05 \\
        $10 \times 10 \times 5$ & $16 \times 4 \times 5$ & $16 \times 5$  & $16 \times 16$  & 9.986e-03  & 4.922e-05 \\
        $10 \times 10 \times 5$ & $32 \times 4 \times 5$ & $32 \times 5$  & $32 \times 32$  & 7.991e-03  & 6.658e-05 \\ \hline
        $20 \times 20 \times 10$ & $4 \times 4 \times 10$ & $4 \times 10$  & $4 \times 4$  & 1.761e-02  & 4.988e-05 \\
        $20 \times 20 \times 10$ & $8 \times 4 \times 10$ & $8 \times 10$  & $8 \times 8$  & 9.067e-03  & 7.050e-05 \\
        $20 \times 20 \times 10$ & $16 \times 4 \times 10$ & $16 \times 10$  & $16 \times 16$  & 7.414e-03  & 7.212e-05 \\
        $20 \times 20 \times 10$ & $32 \times 4 \times 10$ & $32 \times 10$  & $32 \times 32$  & 7.103e-03  & 5.078e-05 \\ \hline
        $40 \times 40 \times 20$ & $4 \times 4 \times 20$ & $4 \times 20$  & $4 \times 4$  & 1.877e-02  & 4.399e-05 \\
        $40 \times 40 \times 20$ & $8 \times 4 \times 20$ & $8 \times 20$  & $8 \times 8$  & 8.362e-03  & 7.138e-05 \\
        $40 \times 40 \times 20$ & $16 \times 4 \times 20$ & $16 \times 20$  & $16 \times 16$  & 1.283e-02  & 6.055e-05 \\
        $40 \times 40 \times 20$ & $32 \times 4 \times 20$ & $32 \times 20$  & $32 \times 32$  & 8.833e-03  & 5.842e-05 \\
        \hline \hline
    \end{tabular}
\end{table}

\begin{figure}[h!]
	\centering	
	\includegraphics[width=.7\textwidth]{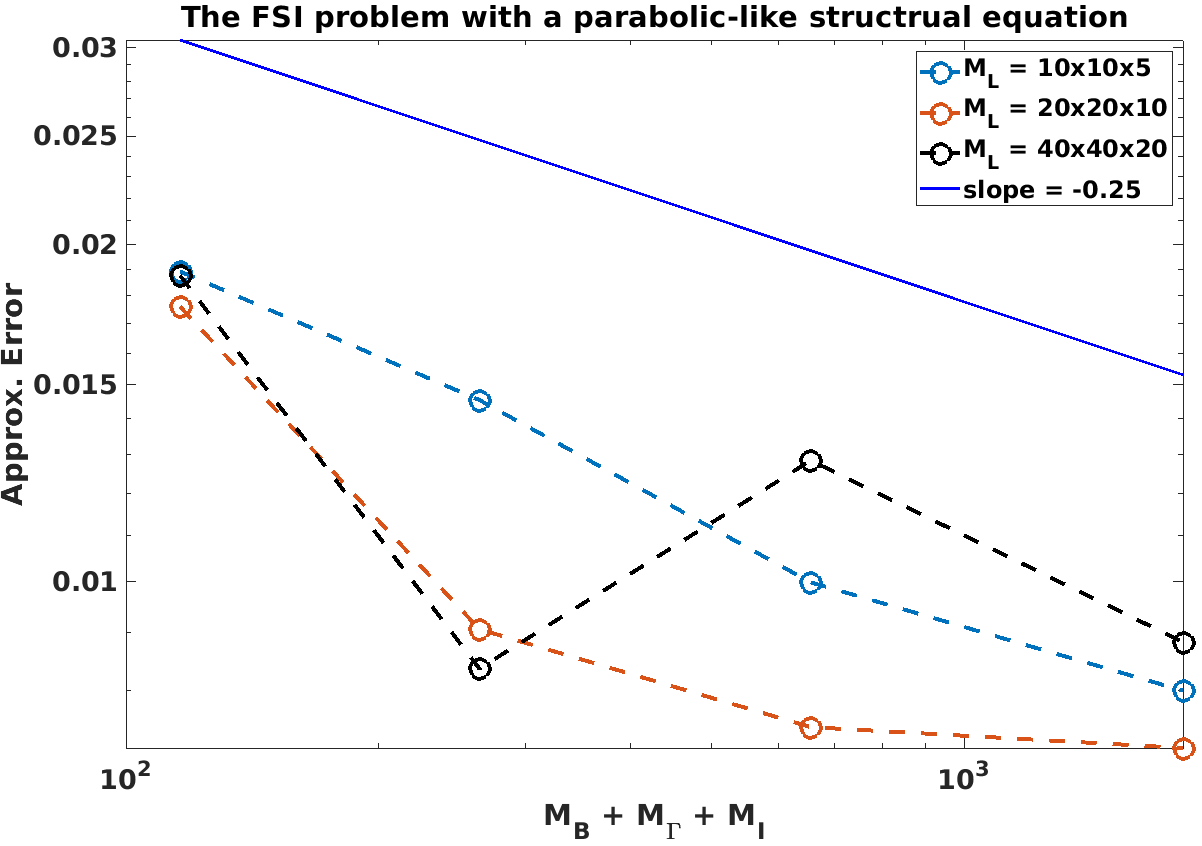}
	\caption{Approximation errors of the FSI problem with a parabolic-like structural equation.} \label{fig:table7}
\end{figure}

\begin{figure}[h!]
    \centering
    \includegraphics[width=.45\textwidth]{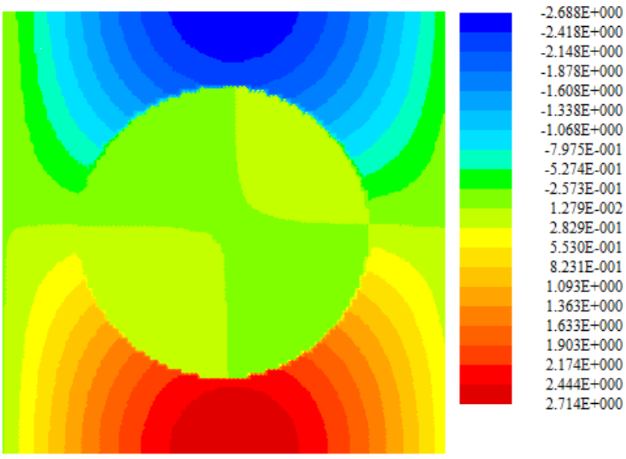}
    \includegraphics[width=.45\textwidth]{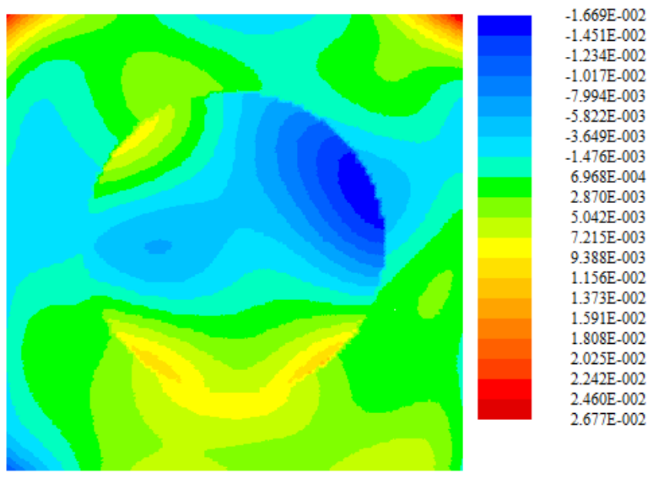} \\
    \includegraphics[width=.45\textwidth]{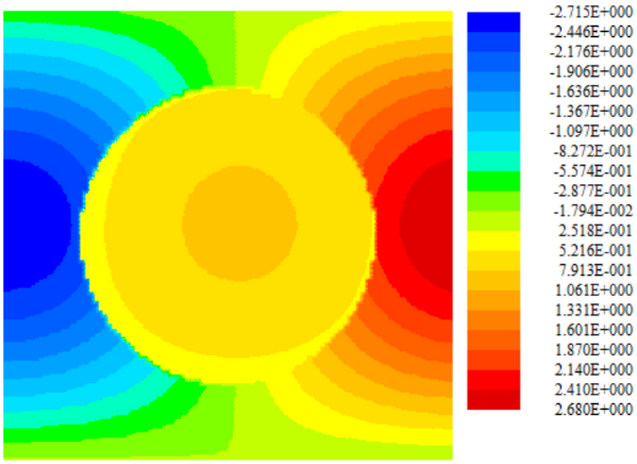}
    \includegraphics[width=.45\textwidth]{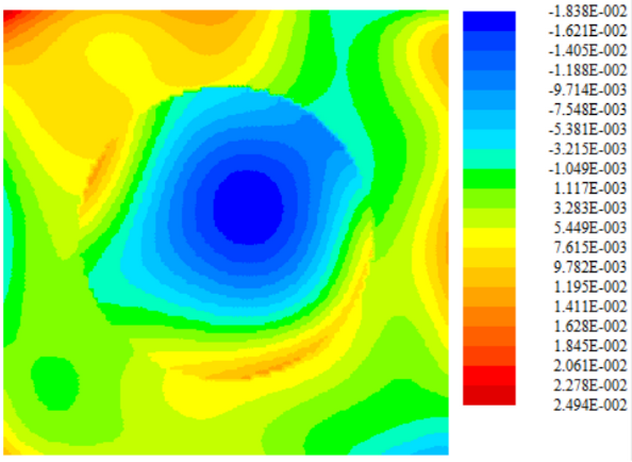} \\
    \includegraphics[width=.45\textwidth]{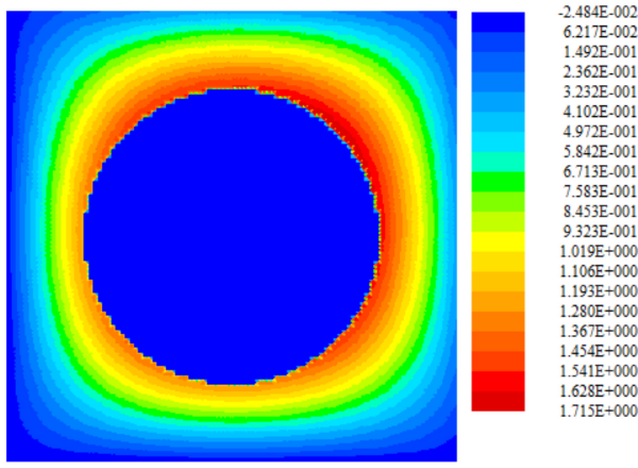}
    \includegraphics[width=.45\textwidth]{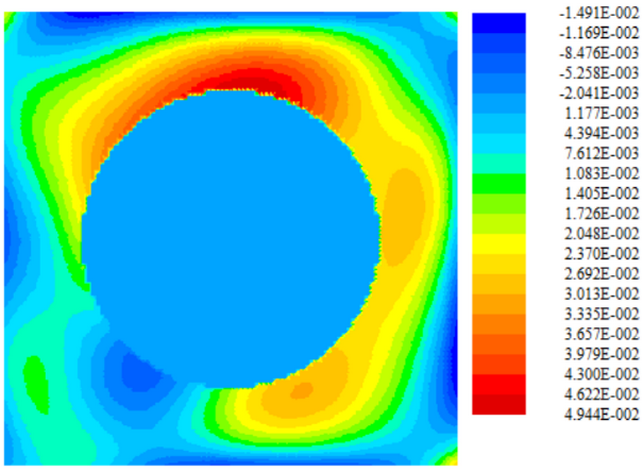}
\caption{Numerical solutions of the FSI problem with a
parabolic-like structural equation ($\rho_f = 1$, $\mu_f = 1$,
$ \rho_s=10^3$, $E=10^6$, and $\nu = 0.3$). Top left: the horizontal
velocity; Top right: error of the horizontal velocity; Middle left:
the vertical velocity;  Middle right: error of the vertical
velocity; Bottom left: the pressure; Bottom right: error of the
pressure.}\label{fig:FSI-parabolic}
\end{figure}

In Tables~\ref{tab:FSI-wave} and \ref{tab:FSI-parabolic}, we report numerical results of the developed DNN/meshfree method for the FSI problem with a wave-type and a parabolic-like structural equation, respectively, where we still see a similar convergence behavior compared with that of the two-phase flow interface problem, i.e., the approximation error decreases first when we increase the number of sampling points,~$M_{\Gamma}$, $M_{\mathcal{B}_i}$ and~$M_{\mathcal{I}_i}\ (i=f,s)$ that are associated with the interface, the boundaries, and the initial subdomains, respectively, while keeping the number of interior sampling points, $M_{\mathcal{L}_i}\ (i=f,s)$, the same, then it might stagnate or even increase, which implies that the quadrature error of the interior space-time domain terms retakes the dominance along with large enough $M_{\Gamma}$, $M_{\mathcal{B}_i}$ and~$M_{\mathcal{I}_i}\ (i=f,s)$. In Figure~\ref{fig:table6} and Figure~\ref{fig:table7}, we again plot the reference line with the absolute value of the slope equaling $0.25$ to demonstrate the approximation errors roughly follow the expected convergence order for both cases based upon different sizes of the sampling points sets, where a decreasing trend of the DNN's approximation errors is also observed, to some extent, along with the increasing number of sampling points on the interface, the boundary, and initial subdomains. Numerical solutions and corresponding approximation errors are plotted in Figures~\ref{fig:FSI-wave} and~\ref{fig:FSI-parabolic}, respectively, where we can observe that numerical solutions are relatively accurate since the approximation errors are small. As expected, the largest part of approximation errors usually lies near the interface and the boundary, which is consistent with our theoretical results derived in Section~\ref{sec:error}, and an enrichment of sampling points on the interface and boundaries can reduce approximation errors significantly.

\section{Conclusions} \label{sec:conclusions}
Overall, in this paper, we consider two kinds of dynamic two-phase interface problems that consist of two different dynamic PDEs with the jump and high-contrast coefficients: (1) two-phase flow interface problems and (2) fluid-structure interaction (FSI) problems with a wave-type and a parabolic-like structural equation, where both PDEs are defined on either side of the interface and are interacted with each other through interface conditions across a stationary interface. We develop a meshfree method using the deep neural networks (DNN) approach to tackle two kinds of dynamic interface problems. Similar to the PINN method, we first reformulate the interface problem as a least-squares (LS) problem and approximate the solution functions via DNN functions, but for two-phase interface problems, in particular, we must adopt different DNN structures in different subdomains separated by interfaces. Sampling points are used to numerically compute integrals arising from LS formulations and replace the entire loss functional with its discrete counterpart. Then the gradient-based method is used to solve the discrete LS problem numerically. We first derive an error analysis for an abstract two-phase interface model problem, then for two kinds of two-phase interface problems. The developed approximation theory provides an interesting strategy for efficiently choosing the sampling points to improve the overall approximation accuracy of the developed DNN/meshfree method for solving the presented two-phase interface problems. That is, more data points shall be sampled on the interfaces, the boundaries, and in the initial subdomains besides the interior space-time domain in order to reduce the entire approximation error. Numerical experiments demonstrate the effectiveness of the developed DNN/meshfree methods for two kinds of two-phase interface problems, and their theoretical results are also verified to some extent.

In this work, we only consider two-phase interface problems with stationary interfaces as the first step in this field. Our new research on developing the DNN/meshfree method for solving moving interface problems such as the realistic FSI problems has been ongoing. In addition, to further improve the approximation accuracy of the DNN/meshfree approach for two-phase flow interface problems, we are considering to develop a new DNN/meshfree method based upon the weak formulation of the original interface problem so that the Neumann-type boundary- and interface conditions can be handled naturally. Finally, we are also considering to sample the data points adaptively in order to achieve more efficient and more accurate algorithms in practice, especially for low-regularity and large-scale simulations in our future work.

\section*{Acknowledgments}
X.Zhu was partially supported by the Young Scientists Fund of National Natural Science Foundation of China (Grant No. 51809026 and No. 61902049) and Joint Special Fund for Basic Research of Local Undergraduate Universities (Partially) in Yunnan Province (Grant No. 202001BA070001-082). P. Sun was supported by a grant from the Simons Foundation (MPS-706640, PS).

\bibliographystyle{plain}
\bibliography{DNN4TwoPhase,FSI,DLS}
\end{document}